\newcommand{\SCob}[2]{\{#1_{#2}\}_{#2\in\uppercase{#2}}}
\newcommand{\F}{\mathscr{F}}
\newcommand{\Assemb}{\mathbf{Asm}}
\newcommand{\bs}{\smallsetminus}
\newcommand{\Spc}{\mathbf{Spc}}
\newcommand{\WaldCat}{\mathbf{WaldCat}}
\newcommand{\W}{\mathcal{W}}
\DeclareMathOperator{\SC}{SC}
\DeclareMathOperator{\Sym}{Sym}
\DeclareMathOperator{\Tw}{Tw}
\DeclareMathOperator{\Spec}{Spec}
\DeclareMathOperator{\Frob}{Frob}
\newcommand{\Gal}{\mathrm{Gal}}
\newcommand{\Fin}{\mathbf{Fin}}
\newcommand{\AFSet}{\mathbf{AFSet}}
\newcommand{\FF}{\mathbb{F}}
\newcommand{\ab}{\mathrm{ab}}
\renewcommand{\emph}{\textsl}
\newcommand{\RR}{\mathbf{R}}
\newcommand{\Seg}{\mathbf{Seg}}
\newcommand{\NN}{\mathbf{N}}
\newcommand{\PP}{\mathbb{P}}
\newcommand{\ord}{\mathrm{ord}}
\def\morpair#1#2#3#4{\begin{tikzpicture}[baseline=(m-1-1.base)] \node[anchor=east]%
    (m-1-1) at (0,0) {$#1$}; \node[anchor=west] (m-2-2) at (2em,0) {$#2$};%
    \diagArrow{->, bend left}{1-1}{2-2}!{#3} \diagArrow{->, bend %
      right}{1-1}{2-2}!{#4}%
  \end{tikzpicture}}
\def\longmorpair#1#2#3#4#5{\begin{tikzpicture}[baseline=(m-1-1.base)] \node[anchor=east]%
    (m-1-1) at (0,0) {$#2$}; \node[anchor=west] (m-2-2) at (#1,0) {$#3$};%
    \diagArrow{->, bend left}{1-1}{2-2}!{#4} \diagArrow{->, bend %
      right}{1-1}{2-2}!{#5}%
  \end{tikzpicture}}
\begin{document}

\title{$E_\infty$-Ring structures on the $K$-theory of assemblers and point
  counting}
\author{Inna Zakharevich}
\maketitle

\begin{abstract}
  We construct a monoidal structure on the category of assemblers.  As an
  appication of this, we construct a derived local zeta-function which takes a
  variety over a finite field to the set of points over the separable closure,
  and use the structure of this map to detect interesting elements in
  $K_1(\Var_k)$. 
\end{abstract}

\section*{Introduction}

An additive invariant on varieties over a base field $k$ with values in an
abelian group $A$ is a function $\mu:\{\mathrm{Varieties}/k\} \rto A$ such that
for any closed immersion $Y \rcofib X$, $[X] = [Y] + [X \bs Y]$.  Such an
invariant must factor uniquely through a homomorphism from the Grothendieck ring
of varieties, $K_0(\Var_k)$, defined by
\[K_0(\Var_k) \defeq
  \begin{array}{c}
    \hbox{free ab. gp. gen.} \\ \hbox{by varieties over $k$}
  \end{array} \left/
    \begin{array}{ll}
      {} {Y \rcofib^{\mathrm{closed}} X} \\
      {} [X] = [Y] + [X \bs Y].
    \end{array}\right.\]
In other words, $K_0(\Var_k)$ is the \emph{universal} additive invariant: if all
structure on $K_0(\Var_k)$ could be understood then all additive invariants
would also be understood.  Such an invariant is called \emph{multiplicative} if
it takes values in a ring and satisfies the additional condition that
$\mu(X\times Y) = \mu(X)\mu(Y)$.  Defining the ring structure on $K_0(\Var_k)$
to be $[X][Y] = [X\times Y]$ implies that any multiplicative additive invariant
must factor uniquely through a ring homomorphism from $K_0(\Var_k)$.

The group $K_0(\Var_k)$ can be modeled topologically as the connected components
of a $K$-theory spectrum $K(\Var_k)$, introduced in \cite{Z-Kth-ass}.  The
higher homotopy groups of this spectrum encode further geometric information
about piecewise-isomorphisms of varieties.  The group $K_1(\Var_k)$ can be
thought of (by analogy with the $K$-theory of a ring) as a ``determinant'' for
piecewise-automorphisms of varieties, in the following manner.

Consider the determinant of a matrix over a field $F$.  The determinant is a
collection of homomorphisms $\det_n: GL_n(F) \rto F^\times$ for each positive
integer $n$ satisfying the following conditions:
\begin{description}
\item[additivity] for positive integers $m$ and $n$, the following diagram
  commutes:
  \begin{diagram}
    { GL_m(F) \oplus GL_n(F) & GL_{m+n}(F) \\
      F^\times \oplus F^\times & F^\times \\};
    \cofib{1-1}{1-2} \to{1-1}{2-1}_{\det_m\oplus \det_n}
    \to{1-2}{2-2}^{\det_{m+n}}
    \to{2-1}{2-2}^\cdot
  \end{diagram}
\item[initial conditions] the following computations hold:
  \[\textstyle{\det_2} \left(\begin{array}{cc} 0 & 1 \\ 1 & 0 \end{array}\right) = -1 \qqand
    \det|_{GL_1(F)} = \mathrm{id}.\]
\end{description}
In particular, the additivity of the determinant ensures \emph{stability}: the
homomorphisms $\det_n$ extend to a homomorphism $GL(F) \rto F^\times$.  If we
generalize $F$ to a ring we have a choice about whether to enforce the initial
conditions or not.  If we enforce the initial conditions we can use the standard
formula for the determinant and the additivity condition will hold; this
determinant will take values in $R^\times$.  However, it also makes sense to say
that the condition we truly care about is additivity; in this case, we can say
that the determinant should take values in the largest abelian group possible
(ensuring that additivity still holds)---the group $GL(R)^{ab} = K_1(R)$.  The
fact that $K_1(R)$ is not necessarily isomorphic to $R^\times$, as it is for a
field, demonstrates that for some base rings the determinant contains more
information than a single unit.  Moreover, although the determinant on $GL_1(R)$
is no longer the identity, there is a natural inclusion $GL_1(R) \rto K_1(R)$.
For a more in-depth discussion of $K_1(R)$, see \cite[Chapter III]{kbook}.

In the case of varieties there is a similar description of the determinant for
automorphisms of varieties.\footnote{This definition an discussion generalizes
  directly to piecewise-automorphisms of varieties; however, in the interest of
  readability we focus entirely on honest automorphisms here.}  For a variety
$X$, write $\Aut(X)$ for the group of automorphisms of $X$.  The additivity
condition for the determinant can be described as the fact that for every
variety $X$ there is a homomorphism
\[\textstyle{\det_X}:\Aut(X) \rto K_1(\Var_k)\]
satisfying the additivity condition that the diagram
\begin{diagram}
  { \Aut(X) \oplus \Aut(Y) & \Aut(X\amalg Y) \\
    K_1(\Var_k) \oplus K_1(\Var_1) & K_1(\Var_k) \\};
  \cofib{1-1}{1-2} \to{1-1}{2-1}_{\det_X\oplus \det_Y}
  \to{1-2}{2-2}^{\det_{X\amalg Y}} \to{2-1}{2-2}
\end{diagram}
commutes for any varieties $X$ and $Y$ over $k$.  The initial conditions are
more complicated, although the condition on the swap has a natural
interpretation.  Finite sets can be considered $0$-dimensional varieties, and
this induces a map $\S \simeq K(\Fin) \rto K(\Var_k)$.  The non-identity element
in the group $K_1(\Fin) \cong \pi_1\S \cong \Z/2$ thus has an image in
$K_1(\Var_k)$; this is the element which is the determinant of the ``swap''.
Note that this agrees with the classical definition: if finite sets are mapped
to vector spaces of the appropriate dimension, then the two-point swap is mapped
to the matrix $
\begin{matrix}{cc}
  0 & 1 \\ 1 & 0
\end{matrix}$ and the induced map on $K_1$ is exactly the homomorphism
$\Z/2 \rto F^\times$ taking $-1$ to itself.

\begin{remark*}
  It is natural to ask what the natural analog of the computation of the
  determinant of $GL_1(F)$ could be.  One possible candidate is the following:
  the group $\Aut(\PP^1)$ contains as a subgroup $k^\times$,
  where $a\in k^\times$ represents the automorphism $[x:y] \rgoesto [ax:y]$.
  There is therefore an induced map $k^\times \rto K_1(\Var_k)$. The question of
  whether this map is injective is still open.
\end{remark*}

In order to analyze elements in $K_1(\Var_k)$ it is possible to ``lift''
invariants of $K_0(\Var_k)$.  More rigorously, suppose that we are given a
homomorphism $K_0(\Var_k) \rto K_0(\C)$ for some category $\C$ which has an
associated $K$-theory spectrum (such as finite sets, projective $R$-modules,
etc.).  It is often possible to lift this homomorphism to a map of spectra
$K(\Var_k) \rto K(\C)$; this map will induce a map $K_n(\Var_k) \rto K_n(\C)$
for all integers $n$.  For example, when $k$ is finite, consider the usual local
zeta function of a variety $X$, written in terms of the action of Frobenius on
$\ell$-adic cohomology,
\[Z(X,t) = \prod_{i=0}^{2\dim X} \det (1- t \Frob|_{H^i_c(\bar X,
    \Q_\ell)})^{(-1)^{i+1}}.\]
The local zeta function induces a homomorphism $K_0(\Var_k) \rto
K_0(\mathrm{GalRep}(\Q_\ell))$, where $\mathrm{GalRep}(Q_\ell)$ is the category
of finitely-generated continuous Galois representations.  In \cite{CWZ-zeta} the
authors lift this homomorphism to a map of spectra and use it to show that when
$|k| \equiv 3 \pmod 4$ the group $K_1(\Var_k)$ contains elements which are not
in the image of $K_1(\Fin)$; in particular, they show that the element
$[\PP^1,1/x] \in \Aut(\PP^1)$ is not in the image of $K_1(\Fin)$.

However, the authors were not able to show that the induced map $K(\Var_k) \rto
K(\mathrm{GalRep}(\Q_\ell))$ is a map of $E_\infty$-ring spectra; in
less-technical language, the authors could lift the group homomorphism but not
the multiplicative structure to the map of spectra.  Although the spectrum
$K(\Var_k)$ was shown in \cite{campbell14} to have an $E_\infty$-structure, the
method for constructing the map of spectra could not ensure that the map is
compatible with this structure.

In this paper, we develop alternate machinery for constructing an
$E_\infty$-structure on $K(\Var_k)$ and use it to construct an
$E_\infty$-version of the local zeta function.  In order to ensure that the map
is $E_\infty$ we use a different, more combinatorial, model of the local zeta
function:
\[Z(X,t) = \exp \sum_{n \geq 1} \frac{|X(\FF_{q^n})|}{n} t^n,\]
when $k = \FF_q$.  Noting that the set $X(\FF_{q^n})$ is uniquely determined by
the set $X(\bar\FF_q)$ together with the action of Frobenius, we consider the
local zeta function to be a map $K_0(\Var_k) \rto K_0(\AFSet_{\hat Z})$.  Here
$\AFSet_{\hat Z}$ is the category of \emph{almost-finite sets}; see
Section~\ref{sec:afsets} for a rigorous definition.  This gives rise to the
following theorem:

\begin{maintheorem}[Theorems~\ref{thm:Einfty} and
  \ref{thm:ringhom}] \label{main:Einfty} The spectra $K(\Var_k)$ and
  $K(\AFSet_{\hat Z})$ are $E_\infty$-ring spectra.  The local zeta function
  induces a ring homomorphism $K_*(\Var_k) \rto K_*(\AFSet_{\hat Z})$.
\end{maintheorem}

This ring structure allows us to do a more in-depth analysis of the structure of
$K_1(\Var_k)$.  The $E_\infty$-structure on $K(\Var_k)$ induces a multiplication
\[K_0(\Var_k) \otimes K_1(\Fin) \rto K_0(\Var_k) \otimes K_1(\Var_k) \rto
  K_1(\Var_k).\]
The elements in this image of this map are those which can be represented by an
automorphism which takes two copies of some variety $X$ and swaps them.  The
element in the image of $K_1(\Fin)$ is one such element, but there are others as
well.  We call elements in this image \emph{permutative}, and those not in the
image \emph{non-permutative}.  The model of the local zeta function constructed
in this paper allows us to conclude the following:

\begin{maintheorem}[Corollary~\ref{cor:gen-root}]
  There exist non-permutative elements in $K_1(\Var_k)$ for all finite fields
  $k$ with odd characteristic.  In particular, let
  $b = \ord_2(|k|-1)$, and let $\alpha\in k$ be a primitive
  $2^b$-th root of unity.  Then the element $[\mathbb{P}^1, x \mapsto \alpha x]$
  is a non-permutative element of $K_1(\Var_k)$.  \note{What about even?}
\end{maintheorem}

\begin{remark*} Permutative elements exist in $K_n(\Var_k)$ for all positive $n$,
  not just $n=1$; the proof of \cite[Theorem 6.6]{CWZ-zeta} uses such elements
  to show that when $k$ is a subfield of $\CC$ there are infinitely many
  nontrivial groups $K_n(\Var_k)$.  (For finite fields this is clear, as the map
  $X \rgoesto X(k)$ induces a map $K(\Var_k) \rto K(\Fin)$ which splits the map
  $K(\Fin)\rto K(\Var_k)$.) 
\end{remark*}

The proofs of these theorems use the machinery of \emph{assemblers}, originally
introduced in \cite{Z-Kth-ass}.  Assemblers contain the combinatorial data of
how different objects of interest decomppose but strip out the specific data of
the context.  Their combinatorial nature makes them amenable to analytic
techniques, equipping them with d\'evissage and localization theorems analogous
to Quillen's theorems for abelian categories.

The assembler of varieties over $k$, generally denoted $\Var_k$ to be consistent
with the notation for the Grothendieck ring, has as objects the varieties over
$k$ and as morphisms locally-closed immersions.  It is also equipped with a
pretopology generated by the coverage $\{Y \rcofib X, X\bs Y \rcofib X\}$ (where
$Y \rcofib X$ is a closed immersion).  In this paper we show that the assembler
definition of the $K$-theory of varieties gives rise to an $E_\infty$-structure
on the $K$-theory by showing that the $K$-theory functor on assemblers is
``almost monoidal,'' taking monoid objects in assemblers to $E_\infty$-ring
spectra.  In fact, this can be generalized a bit further, showing that not only
monoid objects in the category of assemblers but ``monoidal'' assemblers
(assemblers equipped with an operation which is associative, commutative, and
unital up to a natural isomorphism) are taken to $E_\infty$-ring spectra.

\begin{maintheorem} \label{thm:maina} The $K$-theory functor
  $K: \Assemb \rto \Sp$ is monoidal and takes symmetric monoidal assemblers to
  $E_\infty$-ring spectra and symmetric monoidal morphisms of assemblers to
  ring homomorphisms on $K$-groups.
\end{maintheorem}

For more rigorous definitions and theorem statements, see
Section~\ref{sec:Kmonoidal}.  Moreover, we show in Theorem~\ref{thm:K1mult} that
the generators and relations given in \cite[Theorem B]{Z-ass-pi1} for
$K_1(\Var_k)$ interact in a natural manner with the multiplicative structure.

\subsection*{Organization}  This paper is targeted towards those interested in
the applications, including the derived $\zeta$-function.  In the service of
this, we front-load the applications, and leave the proofs of the structural
theorems for later sections.  Section~\ref{sec:review} contains a quick review
of assemblers and the results vital for an understanding of the applications.
Section~\ref{sec:ex} gives several examples of interest to this paper.
Section~\ref{sec:Gsets} analyzes non-permutative elements in $G$-sets, which
is used in Section~\ref{sec:zetamor} to detect non-permutative elements in
$K(\Var_k)$; Section~\ref{sec:zetamor} has all of the main applications of the
main theorem in this paper.  The last four sections cover the technical
underpinnings of Theorem~\ref{thm:maina}.  Section~\ref{sec:technical} gives a
run-down of the technical results necessary for the proofs.
Section~\ref{sec:monoidal} proves that the claimed product on assemblers
produces a symmetric monoidal structure.  Section~\ref{sec:Kmonoidal} proves
Theorem~\ref{thm:maina}.  Lastly, Section~\ref{sec:K1} proves that generators of
$K_1$ interact with the monoidal structure in the expected manner.

\subsection*{Acknowledgements} The author is grateful to Anna-Marie Bohmann,
David Corwin,  Brian Huang, Niles Johnson, and Angelica Osorno for helpful
conversations and for answering all sorts of questions.  The author would also
like to extend special thanks to Thomas Barnet-Lamb for his extreme patience
with all of the different variations of basic number theory questions that came
up during the writing of this paper.  

\section{A quick run-down of assemblers} \label{sec:review}

\begin{definition}[{\cite[Definition 2.4]{Z-Kth-ass}}] \label{def:ass}
  In a Grothendieck site $\C$ with an initial object, a \emph{covering family}
  is a family of morphisms which generates a covering sieve.  A family
  $\mathscr{F}$ is \emph{disjoint} if for any two morphisms $f:A \rto C$ and
  $g: B \rto C$ in $\mathscr{F}$, $A\times_C B$ exists an is equal to the
  initial object.

  An \emph{assembler} is a Grothendieck site $\C$, satisfying the following
  extra conditions:
  \begin{itemize}
  \item[(I)] $\C$ has an initial object $\initial$, and $\initial$ has an empty
    covering family.
  \item[(R)] For any two finite disjoint covering families of an object $A$,
    there is a common refinement which is itself finite and disjoint.
  \item[(M)] All morphisms in $\C$ are monic.
  \end{itemize}
  An assembler is \emph{closed} if the category $\C$ has all pullbacks.  Note
  that in this case, axiom (R) holds automatically.

  We generally assume that the initial object in $\C$ is unique, although this
  assumption does not affect any of the constructions in this paper.

  For an assembler $\C$, we write $\C^\circ$ for the full subcategory of
  noninitial objects of $\C$.

  A morphism of assemblers is a functor which preserves the initial objects and
  disjointness and which is continuous with respect to the topology.  Write
  $\Assemb$ for the category of assemblers, and $c\Assemb$ for the subcategory
  of closed assemblers and pullback-preserving morphisms of assemblers.
\end{definition}

The main examples of assemblers appearing in this paper are the following:

\begin{example}
  Let $G$ be a discrete group.  The assembler $\S^\Assemb_G$ has two objects,
  $\initial$ and $*$, with one morphism $\initial \rto *$.  In addition, $*$ has
  automorphism group $G$.  The topology is generated by the trivial covering
  families on $\initial$ and $*$, together with the empty covering family of
  $\initial$. When $G$ is trivial we omit it from the notation.
\end{example}

\begin{example}
  Let $G$ be a group.  The assembler $\Fin_G$ has as objects the finite
  $G$-sets, with morphism $G$-equivariant injections.  A family is a covering
  family if it is mutually surjective.  In other words, a family
  $\{f_i:A_i \rto A\}_{i\in I}$ is a covering family if
  $\bigcup_{i\in I} f_i(A_i) = A$.  (As before, when $G$ is trivial we omit it
  from the notation.)  When $G$ is profinite, the assembler $\AFSet_G$ is the
  assembler of \emph{almost-finite} $G$-sets: those $G$-sets $S$ such that $S^H$
  is finite for any closed subgroup $H \leq G$ and such that for all $x\in S$,
  the orbit $G \cdot x$ is finite.  Again, the morphisms are $G$-equivariant
  inclusions and covering families are mutually surjective.
\end{example}

\begin{example}
  Let $k$ be a field.  The assembler $\Var_k$ has as objects $k$-varieties
  (i.e. reduced separated schemes of finite type over $k$) and as morphisms
  locally closed immersions.  The topology is generated by the coverage
  consisting of families $\{f:Y \rcofib X, X \bs Y \rcofib X\}$, where $f$ is a
  closed immersion.  More generally, for a Noetherian scheme $S$ the assembler
  $\Var_S$ with objects varieties over $S$ is defined analogously to $\Var_k$.
\end{example}

Assemblers have a $K$-theory which classifies ``scissors congruence'' of the
objects of the assembler:

\begin{theorem}[{\cite[Theorem A]{Z-Kth-ass}}] \label{thm:K0}
  There exists a functor $K: \Assemb \rto \Sp$ from the category of assemblers
  to the category of spectra such that for any assembler $\C$, $\pi_0K(\C)$ is
  the free abelian group generated by objects of $\C$ modulo the relations
  \[[A] = \sum_{i\in I} [A_i] \qquad \hbox{for any finite disjoint covering
      family $\{A_i \rto A\}_{i\in I}$.}\]
\end{theorem}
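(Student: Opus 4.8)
The plan is to realize $K(\C)$ as the group-completion ($K^{\oplus}$, or Segal $\Gamma$-space) $K$-theory of a symmetric monoidal category assembled functorially from $\C$; this is in essence the construction of \cite{Z-Kth-ass}. First, for an assembler $\C$ I would form the category $\mathcal{W}(\C)$ whose objects are the finite families $\{A_i\}_{i\in I}$ of noninitial objects of $\C$, and whose morphisms $\{A_i\}_{i\in I}\to\{B_j\}_{j\in J}$ consist of a map of indexing sets $\varphi\colon I\to J$ together with morphisms $A_i\to B_{\varphi(i)}$ of $\C$ such that, for every $j\in J$, the family $\{A_i\to B_j\}_{i\in\varphi^{-1}(j)}$ is a finite disjoint covering family of $B_j$. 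This composes: the composite of covering families is again a covering family, and the composite family stays disjoint by a routine pullback argument using strictness of the initial object (axiom (I)) and that all morphisms are monic (axiom (M)). Concatenation of families makes $\mathcal{W}(\C)$ a permutative category with unit the empty family, and a morphism of assemblers $F\colon\C\to\mathcal{D}$ — which by definition preserves the initial object, disjointness, and covering families — induces a strict symmetric monoidal functor $\mathcal{W}(F)$; applying a standard group-completion machine to $\mathcal{W}(-)$ then yields the functor $K\colon\Assemb\to\Sp$.

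Next I would compute $\pi_0$. Since $K^{\oplus}$ is a group completion, $\pi_0 K(\C)$ is the group completion of the commutative monoid $M\defeq\pi_0|N\mathcal{W}(\C)|$, that is, the set of connected components of $N\mathcal{W}(\C)$ with addition induced by concatenation. Each object of $\mathcal{W}(\C)$ is a concatenation of singletons $\{A\}$, so $M$ is generated by classes $[A]$ with $A$ a noninitial object of $\C$; and a morphism whose target is a singleton $\{B\}$ exhibits exactly the relation $\sum_{i\in I}[A_i]=[B]$ for the finite disjoint covering family $\{A_i\to B\}_{i\in I}$. Conversely every morphism of $\mathcal{W}(\C)$ factors as a concatenation of such single-target pieces, so the connected-component relation on $M$ is generated by the covering-family relations and no others. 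Hence $M$ is the free commutative monoid on the noninitial objects of $\C$ modulo these relations, and its group completion is the free abelian group on the objects of $\C$ modulo $[A]=\sum_{i\in I}[A_i]$ over all finite disjoint covering families; the case $I=\emptyset$ (available at $\initial$ by axiom (I)) gives $[\initial]=0$, which reconciles the two generating sets.

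The step I expect to be the genuine obstacle is not this $\pi_0$ bookkeeping but making the construction robust enough to support everything built on it afterwards: one wants the chosen model — $\mathcal{W}(\C)$ as above, or a more refined simplicial-assembler or $\Gamma$-object version — to be strictly functorial and to mesh with the technical axioms, and here the refinement axiom (R) is essential, guaranteeing that any two finite disjoint covering families of an object have a common finite disjoint refinement (this is what makes the higher coherences, and downstream the d\'evissage and localization theorems, go through). One must also remember that morphisms of assemblers are only required to be continuous, hence preserve covering families but not arbitrary limits, so every argument has to be phrased to use only that much. For the present statement, however, all of this is needed only to see that the construction is well posed; granting that, the spectrum $K(\C)$ and the identification of $\pi_0 K(\C)$ are then formal.
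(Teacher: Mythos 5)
Your proposal is correct and follows essentially the same route as the cited construction the paper recalls in Definition~\ref{def:Kass}: build the symmetric monoidal category $\W(\C)$ of finite families with disjoint-covering-family morphisms, feed it into the Segal $\Gamma$-space machine (the paper's $X \rgoesto \W(X\sma\C)$ is exactly this), and identify $\pi_0$ as the group completion of $\pi_0 N\W(\C)$ with the covering-family relations, the empty cover of $\initial$ giving $[\initial]=0$. No substantive divergence from the original argument of \cite{Z-Kth-ass}.
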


\begin{definition}
  For an assembler $\C$, write $K_n(\C) \defeq \pi_n K(\C)$.
\end{definition}

Although generators and relations for $K_n(\C)$ for $n > 0$ akin to those given
in the above theorem are difficult to come by, there is a simple description of
those elements in $K_1(\C)$ which are of interest in the current context:
\begin{theorem}[Corollary \ref{cor:K1sp}] \label{thm:K1}
  Let $\C$ be an assembler.  
  Let $A$ be an object in $\C$, an let $\sigma\in \Aut(A)$.  Then the pair $[A,
  \sigma]$ represents an element of $K_1(\C)$.  These satisfy the following
  relations:
  \begin{itemize}
  \item For any finite disjoint covering family $\{f_i:A_i\rto A\}_{i\in I}$
    such that for each $i\in I$, there is a $\sigma_i\in \Aut(A_i)$ making the
    square
    \begin{diagram}
      { A & A \\ A_i & A_i \\};
      \arrowsquare{\sigma}{f_i}{f_i}{\sigma_i}
    \end{diagram}
    commute, 
    \[[A,\sigma] = \sum_{i\in I} [A_i, \sigma_i].\]
  \item If $\sigma, \sigma'\in \Aut(A)$ then
    \[[A,\sigma] + [A,\sigma'] = [A, \sigma\circ \sigma'].\]
  \end{itemize}
\end{theorem}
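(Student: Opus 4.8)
The plan is to realize $K(\C)$ through an explicit model whose fundamental groupoid / low-dimensional cells can be read off directly, and then to identify the claimed classes $[A,\sigma]$ with loops in that model. The natural choice is the $\Gamma$-space (or Segal-style) model of $K(\C)$ described in \cite{Z-Kth-ass}: the space $K(\C)$ arises as the group completion of a symmetric monoidal category (or simplicial set) $\W(\C)$ whose objects encode finite disjoint families of noninitial objects and whose morphisms encode refinements of families together with the automorphisms of the pieces. Under this model, $\pi_1$ of the spectrum is computed as $\pi_1$ of the group completion, which by the usual Segal/McDuff--Segal argument is $\pi_1$ of the classifying space of the relevant category, so that a loop is represented by a zig-zag of morphisms; for a single object $A$ with an automorphism $\sigma \in \Aut(A) \subseteq \W(\C)$, the automorphism $\sigma$ itself gives a loop at the object $[A]$, and this is the definition of the class $[A,\sigma]$. (Formally I would cite or reprove the general fact that for a symmetric monoidal category $(\mathcal M,\oplus)$ with $K_0$ a group, $\pi_1 K(\mathcal M) \cong \pi_1 B\mathcal M$ modulo the $\oplus$-coherence, e.g.\ the Nakaoka-type description; here $B\mathcal M$ is the classifying space of $\W(\C)$.)

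Granting that description, the second relation is immediate: the composite of two automorphisms $\sigma, \sigma'$ of the fixed object $A$ is, as a loop in the classifying space, the concatenation of the loop for $\sigma$ with the loop for $\sigma'$, and concatenation of loops at the same basepoint is addition in $\pi_1$. Thus $[A,\sigma] + [A,\sigma'] = [A, \sigma \circ \sigma']$; in particular $[A,\id] = 0$. For the first relation, suppose $\{f_i : A_i \rcofib A\}_{i\in I}$ is a finite disjoint covering family with automorphisms $\sigma_i$ making each square commute. In the model $\W(\C)$ there is a morphism from the one-term family $\{A\}$ to the family $\{A_i\}_{i\in I}$ exhibiting the refinement, and this morphism is an equivalence after group completion because $[A] = \sum_i [A_i]$ in $K_0$; more precisely the refinement morphism together with the covering axiom (R) shows that $\{A\}$ and $\coprod_i \{A_i\}$ become identified in $K(\C)$, compatibly with automorphisms. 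The commuting squares say exactly that $\sigma$ and $\bigoplus_i \sigma_i$ correspond under this identification, so the loop represented by $\sigma$ at $[A]$ is carried to the loop represented by $\bigoplus_i \sigma_i$ at $\sum_i [A_i]$. By additivity of the $K$-theory class of a loop in a $\oplus$-structure (the analogue of $\det(g \oplus h) = \det g + \det h$), the latter loop represents $\sum_{i\in I} [A_i,\sigma_i]$, giving the first relation.

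The main obstacle is making precise the passage from ``morphism in $\W(\C)$ inducing an isomorphism on $\pi_0$'' to ``equivalence on $K$-theory, compatibly with the automorphism action,'' i.e.\ showing that the refinement map is not just a $\pi_0$-isomorphism but actually induces the identification of the corresponding $1$-cells. This is where axiom (R) (common refinement of finite disjoint covering families) does real work: it guarantees that the diagram of refinements is filtered enough that the relevant maps of classifying spaces are equivalences, not merely $\pi_0$-surjections. I expect the cleanest route is to invoke a $K$-theory-level statement already available from \cite{Z-Kth-ass} — namely that a disjoint covering family induces an equivalence $K(\C) \simeq \bigvee$ or a cofiber-sequence decomposition — rather than arguing simplicially by hand; then the two relations follow from functoriality of $\pi_1$ and the elementary fact that $\pi_1$ of a product (smash/wedge at the spectrum level, i.e.\ the $\oplus$ on the category level) adds. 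The one genuinely delicate point to check is that the asserted commuting squares are the correct compatibility condition to transport $\sigma$ to $\bigoplus \sigma_i$ under the equivalence; this is a diagram chase in $\W(\C)$ using that all morphisms are monic (axiom (M)), which forces the refinement morphism to be compatible with at most one choice of $\sigma_i$, pinning down the correspondence.
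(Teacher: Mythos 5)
Your intuition---that $[A,\sigma]$ is the loop determined by $\sigma$ in a group-completion model built from $\W(\C)$, that concatenation of loops gives the second relation, and that the covering morphism should transport $\sigma$ to $\bigoplus_i\sigma_i$---is sound, but the two steps you lean on hardest are where the argument as written fails. First, the ``general fact'' that $\pi_1 K(\mathcal M)\cong\pi_1 B\mathcal M$ modulo $\oplus$-coherence is not available in any usable form: group completion changes $\pi_1$ drastically (for finite sets, $\pi_1$ of the isomorphism category at the $n$-point component is $\Sigma_n$, while $K_1(\Fin)\cong\Z/2$), and making the passage from loops in $B\W(\C)$ to a presentation of $K_1$ precise is exactly the content of \cite[Theorem B]{Z-ass-pi1}, which is what the paper invokes (Theorem~\ref{thm:Bpi1}): every class is represented by a parallel pair $f,g\colon A\rto B$ in $\W(\C)$, subject to $[f,f]=0$, additivity under composition, and additivity under disjoint union of tuples. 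Second, your mechanism for the covering relation is wrong in the specifics: the covering morphism is not ``an equivalence after group completion because $[A]=\sum_i[A_i]$ in $K_0$'' (a $\pi_0$-identity does not make a map an equivalence), no decomposition of $K(\C)$ as a wedge or cofiber sequence is induced by a single covering family, axiom (R) plays no such role here, and axiom (M) is not what pins down the $\sigma_i$---the commuting squares are a hypothesis of the statement, not something to be derived. Note also the direction: the covering family is itself a morphism $\delta\colon\{A_i\}_{i\in I}\rto\{A\}$ in $\W(\C)$, not a map out of $\{A\}$.

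What is actually needed is much weaker, and it is the paper's short diagram chase. With the presentation of Theorem~\ref{thm:Bpi1} in hand, the hypothesis says $\sigma\circ\delta=\delta\circ\bigl(\bigoplus_i\sigma_i\bigr)$ in $\W(\C)$; applying the composition relation to the two factorizations of this common morphism and using $[\delta,\delta]=0$ gives $[A,\sigma]=[\{A_i\}_{i\in I},\bigoplus_i\sigma_i]$, and the disjoint-union relation turns the right-hand side into $\sum_i[A_i,\sigma_i]$. The second relation of the theorem is the composition relation verbatim. (This is the same style of computation the paper carries out for the $\amalg$-relation in Corollary~\ref{cor:K1sp}.) If you want to keep the topological language, the correct replacement for your ``equivalence'' step is: $\delta$ gives a path from the point of $\{A_i\}_{i\in I}$ to the point of $\{A\}$, and the commuting squares give a homotopy between $\sigma$ conjugated by that path and the loop $\bigoplus_i\sigma_i$; since the target is an $H$-space, basepoint translation is canonical and the monoidal sum of loops adds in $\pi_1$. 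But to turn any of this into a proof you still need the presentation of $K_1$, i.e.\ \cite[Theorem B]{Z-ass-pi1}, which your sketch would otherwise have to reprove.
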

This theorem does not claim that these are the \emph{only} relations satisfied
by these elements, or that $K_1(\C)$ is generated by these elements.  However,
as these are the only elements of interest in this paper we restrict our
attention to this simpler statement.  For a more comprehensive analysis, see
\cite[Theorem B]{Z-ass-pi1}.

There is a monoidal structure on the category of closed
assemblers.\footnote{Although it should be possible to extend this construction
  to all assemblers, this would require more technical work which would distract
  from the main idea.  As all examples of interest to us are closed we focus on
  this subclass of examples.}  The intuition behind it comes from the following
example:

\begin{example} \label{ex:seg}
  Let $\Seg$ be the assembler whose objects are closed intervals in
  $\RR$ and whose morphisms are isometric injections.  Let $\mathbf{Rec}$ be the
  assembler whose objects are sets of the form $[a,a'] \times [b,b']$ in
  $\RR^2$, with morphisms isometric embeddings.  (Again, the topologies consist
  of the mutually surjective families.)  An object in $\mathbf{Rec}$ is a ``pair
  of objects'' in $\Seg$, and any decomposition of the elements of the
  pair produces a decomposition of the whole object:
  \begin{center}
    \begin{tikzpicture}[anchor=base,baseline,yshift=-2em,xscale=1.3]
      \draw [|-|,yshift=-0.8em] (0,0) to node[font=\scriptsize,below]  {$[a,a']$} (2,0);
      \draw [|-|,xshift=-0.8em] (0,0) to node[font=\scriptsize,left] {$[b,b']$} (0,2);
      \draw (0,0) rectangle (2,2);
    \end{tikzpicture}
    \setlen{4em}{\inlineArrow{->, decorate, decoration={snake}}}
    \begin{tikzpicture}[anchor=base,baseline,yshift=-2em,xscale=1.3]
      \draw [|-|,yshift=-0.8em] (0,0) to node[font=\scriptsize,below]  {$[a,a']$} (2,0);
      \draw [|-|,xshift=-0.8em] (0,0) to node[font=\scriptsize,left] {$[b,b']$} (0,2);
      \draw (0,0) rectangle (2,2);

      \draw[red] (0.3,-0.2) -- (0.3,-0.4) (1.4,-0.2) -- (1.4,-0.4) (1.8,-0.2) --
      (1.8,-0.4);
      \draw[red] (0.3,0) -- (0.3,2) (1.4,0) -- (1.4,2) (1.8,0) -- (1.8,2);
      \draw[blue] (-0.2,0.5) -- (-0.4,0.5) (-0.2,1.2) -- (-0.4,1.2) (-0.2,1.7)
      -- (-0.4,1.7);
      \draw[blue] (0,0.5) -- (2,0.5) (0,1.2) -- (2,1.2) (0,1.7) -- (2,1.7);
    \end{tikzpicture}
    .
  \end{center}
\end{example}

The idea of the monoidal structure on $\Assemb$ is to produce covering families
which are similarly ``gridded'' in the general setting.

\begin{definition}
  Let $\C$ and $\D$ be two closed assemblers.  The assembler $\C\sma \D$ has as
  underlying category the full subcategory of $\C\times \D$ consisting of those
  pairs $(C,D)$ where $C = \initial$ if and only if $D = \initial$.  The
  topology on this assembler is generated by the coverage consisting of those
  families
  \[\{(A_i,B_j) \rto (A,B)\}_{(i,j)\in I\times J}\]
  where both $\{A_i \rto A\}_{i\in I}$ and $\{B_j \rto B\}_{j\in J}$ are
  covering families in $\C$ and $\D$, respectively.
\end{definition}

This is not the usual topology on the product of sites.  The usual topology on
the product would have as covering families those families
$\{(A_i,B_i) \rto (A,B)\}_{i\in I}$ where $\{A_i \rto A\}_{i\in I}$ and
$\{B_i \rto B\}_{i\in I}$ are covering families in $\C$ and $\D$, respectively.
In particular, the family of shaded rectangles
\begin{center}
\begin{tikzpicture}[anchor=base,baseline,yshift=-2em,xscale=1.3]
  \draw [|-|,yshift=-0.8em] (0,0) to node[font=\scriptsize,below]  {$[a,a']$} (2,0);
  \draw [|-|,xshift=-0.8em] (0,0) to node[font=\scriptsize,left] {$[b,b']$} (0,2);
  \draw (0,0) rectangle (2,2);

  \draw[fill=lightgray] (0,0) rectangle (0.3,0.5) rectangle (1.4,1.2) rectangle
  (1.8,1.7) rectangle (2,2);
 
  \draw[red] (0.3,-0.2) -- (0.3,-0.4) (1.4,-0.2) -- (1.4,-0.4) (1.8,-0.2) --
  (1.8,-0.4);
  \draw[red] (0.3,0) -- (0.3,2) (1.4,0) -- (1.4,2) (1.8,0) -- (1.8,2);
  \draw[blue] (-0.2,0.5) -- (-0.4,0.5) (-0.2,1.2) -- (-0.4,1.2) (-0.2,1.7)
  -- (-0.4,1.7);
  \draw[blue] (0,0.5) -- (2,0.5) (0,1.2) -- (2,1.2) (0,1.7) -- (2,1.7);
\end{tikzpicture}
\end{center}
gives a covering family under the standard topology, but not under the
topology in $\Seg \sma \Seg$.  In $\Seg \sma
\Seg$ all $16$ rectangles in the picture are necessary for a covering
family.

\begin{remark}
  This definition of the topology is interesting in that it allows us to produce
  a natural example of a Waldhausen category which does not satisfy the
  Saturation Axiom.  See Example~\ref{ex:nonsaturated}.
\end{remark}

The main technical result of this paper is the following:

\begin{theorem}[Lemma~\ref{lem:smasym},
  Section~\ref{sec:Kmonoidal}] \label{thm:monoidal} The structure
  $(c\Assemb,\sma, \S)$ is a symmetric monoidal structure on the category of
  closed assemblers.  The $K$-theory functor is monoidal and thus takes monoid
  objects to ring spectra.

  Given a symmetric monoidal assembler $\C$ (see
  Definition~\ref{def:symmonass}), $K(\C)$ is an $E_\infty$-ring spectrum.  A
  monoidal morphism of symmetric monoidal assemblers $\C \rto \D$ induces a ring
  homomorphism $K_*(\C) \rto K_*(\D)$.
\end{theorem}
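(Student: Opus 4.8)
The plan is to split the statement into three parts: (i) $(c\Assemb,\sma,\S)$ is a symmetric monoidal category; (ii) $K\colon c\Assemb\to\Sp$ is lax symmetric monoidal, with the unit map from the sphere spectrum an equivalence; and (iii) the stated consequences — monoid objects map to ring spectra, symmetric monoidal assemblers (in the sense of Definition~\ref{def:symmonass}) map to $E_\infty$-ring spectra, and monoidal morphisms map to ring homomorphisms on homotopy groups. Parts (i) and (iii) are essentially formal once (ii) is available; (ii) is where the real work lies.

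For (i): I would first check that $\C\sma\D$ is again a closed assembler. Its underlying category is a full subcategory of $\C\times\D$, so all morphisms remain monic; pullbacks in $\C\times\D$ are computed componentwise, and since a pullback along any morphism into an initial object is initial, the full subcategory carved out by ``$C=\initial$ iff $D=\initial$'' is closed under pullback — hence $\C\sma\D$ has all pullbacks and axiom (R) is automatic — while $(\initial,\initial)$ is initial with an empty covering family, giving axiom (I). One then verifies that the ``gridded'' coverage generates a Grothendieck topology. The associator, unitors, and braiding are the evident isomorphisms of categories induced from $(\mathbf{Cat},\times)$; the content is checking that each restricts to the relevant full subcategories (immediate, since the ``initial iff initial'' condition is symmetric and stable) and is a morphism of assemblers, i.e.\ continuous — which holds because these functors send gridded covering families to gridded covering families by reindexing. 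The pentagon, triangle, and hexagon then follow from the coherences of $(\mathbf{Cat},\times)$, since every structure map is induced from there. This is Lemma~\ref{lem:smasym}.

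For (ii): I would factor $K$ through an intermediate lax symmetric monoidal functor. Recall that $K(\C)$ is produced by first forming a Waldhausen category $\W(\C)$ from $\C$ and then applying a $K$-theory machine. The crucial step is to construct, naturally in $\C$ and $\D$, a \emph{biexact} pairing $\W(\C)\times\W(\D)\to\W(\C\sma\D)$ that is coherently associative and symmetric. This is exactly where the non-standard topology on $\C\sma\D$ enters: an admissible cofibration in the $\C$-direction, coming from a disjoint covering family $\{A_i\to A\}$, paired with one in the $\D$-direction from $\{B_j\to B\}$, produces the gridded family $\{(A_i,B_j)\to(A,B)\}$, which is a covering family in $\C\sma\D$ precisely because of the definition of the topology — it is \emph{not} one for the product topology — so it is this topology that makes the pairing carry cofibration sequences in each variable to cofibration sequences, i.e.\ makes it biexact. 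Given the biexact pairing, the standard multiplicativity of Waldhausen $K$-theory yields natural assembly maps $K(\C)\sma K(\D)\to K(\C\sma\D)$, and the associativity, symmetry, and unitality of these maps reduce to the corresponding properties of the pairing $\W(-)\times\W(-)\to\W(-\sma-)$ together with the coherence built into Waldhausen multiplicativity; the unit condition recovers the known identification of $K(\S)$ with the sphere spectrum. Organizing these coherences — that is, showing $\W$ is itself lax symmetric monoidal for $\sma$ and that $K$ of a biexact pairing behaves coherently — is the substance of Sections~\ref{sec:technical}--\ref{sec:Kmonoidal} and is the main obstacle; the non-obvious input is that the peculiar gridded topology is not merely convenient but forced on us by the requirement that the pairing be biexact.

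For (iii): a lax symmetric monoidal functor sends (commutative) monoid objects to (commutative) monoid objects, so a monoid object of $(c\Assemb,\sma,\S)$ maps to an associative ring spectrum and a commutative one to an $E_\infty$-ring spectrum. For a general symmetric monoidal assembler $\C$ — whose multiplication is associative, unital, and commutative only up to coherent natural isomorphism — I would use that $K$ carries natural isomorphisms of assembler morphisms to homotopies (so it takes the pseudo-monoid structure on $\C$ to a homotopy-coherent commutative monoid structure on $K(\C)$), together with the fact that homotopy-coherent commutative algebras in $\Sp$ are precisely $E_\infty$-ring spectra; alternatively one can first replace $\C$ by a levelwise-equivalent strict commutative monoid in $c\Assemb$ via strictification of pseudomonoids and reduce to the previous case, using that $K$ inverts such equivalences. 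Finally, a monoidal morphism of symmetric monoidal assemblers is in particular a map of the associated (homotopy-coherent) ring objects, so applying $K$ and passing to $\pi_*$ gives a ring homomorphism. The only delicate point here is the strictification and coherence bookkeeping; everything else is formal.
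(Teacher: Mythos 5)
Your part (i) is essentially the paper's argument (the paper reaches $\sma$ slightly differently, first forming the full product $\C\boxtimes\D$ with the gridded topology and then deleting the sieve of objects with one initial coordinate precisely to repair the unitors, but the content is the same), and your observation that the gridded topology is what makes the pairing interact well with covering families is exactly the right point. The genuine gap is in step (ii)/(iii), at the passage from a pairing to an $E_\infty$-structure. First, your fallback route --- ``replace $\C$ by a levelwise-equivalent strict commutative monoid in $c\Assemb$ via strictification of pseudomonoids'' --- is not available: associativity and unitality can be strictified, but commutativity cannot, so there is no strictly commutative monoid object to reduce to, and your ``previous case'' only yields associative ring spectra. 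Second, your primary route --- $K$ sends natural isomorphisms to homotopies, hence the pseudo-commutative structure on $\C$ gives a homotopy-coherent commutative algebra in $\Sp$, and such algebras are $E_\infty$ --- asserts exactly the coherence statement that is the hard part: commutativity of each diagram up to homotopy does not by itself produce the full tower of $E_\infty$-coherences, and the paper explicitly remarks (after the base-change lemma) that tracking an operad action through the $K$-theory machinery is what it cannot, and does not, do. Similarly, ``standard multiplicativity of Waldhausen $K$-theory'' gives bilinear pairings, but the symmetric coherence of the resulting external products is not an off-the-shelf fact; asserting it is where your proof would stall.

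What the paper actually does at this point is supply a concrete coherence machine. It strictifies only associativity and unit at the assembler level, replacing a symmetric monoidal assembler $\C$ by $\hat\C$ (finite sequences of objects, with $M[A_1,\ldots,A_n]$ the iterated product), a genuine monoid object with $K(\hat\C)\simeq K(\C)$, keeping the symmetry as a natural isomorphism; it then shows that for a monoid object equipped with a symmetry the category $\W(\hat\C)$, with its $\FinSet$-induced additive permutative structure, is a \emph{bipermutative} category, and invokes the Elmendorf--Mandell theorems to convert bipermutative categories and their strict maps into $E_\infty$-ring spectra and $E_\infty$-maps; Theorem~\ref{thm:ringhom} then follows by running a symmetric monoidal morphism through the same strictification. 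This bipermutative/Elmendorf--Mandell input is the missing idea that replaces both of your hand-waved coherence steps. (A small further point in (i): closure of the full subcategory under pullbacks is not automatic in the case where exactly one coordinate of a componentwise pullback is initial; that case deserves an explicit word rather than the blanket claim you give.)
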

Here, a ``symmetric monoidal assembler'' is a weakened form of a monoid object
in assemblers; it is directly analogous to the definition of a symmetric
monoidal category with the cartesian product of categories replaced by the
$\sma$-product of assemblers.

In particular, this theorem implies that if $\C$ is a symmetric monoidal
assembler then $K_*(\C)$ is a graded-commutative ring.  

\begin{proposition}
  Let $\C$ be a symmetric monoidal assembler with multiplication map $\mu$, and
  let $[A],[A']\in K_0(\C)$ and $[B, \sigma]\in K_1(\C)$.  Then
  \[[A][A'] = [\mu(A,A')] \qqand [A][B,\sigma] = [\mu(A,B), \mu(1_A, \sigma)].\]
\end{proposition}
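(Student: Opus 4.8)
The plan is to unwind the definitions so that everything reduces to statements about the monoidal structure on $\Assemb$ and the compatibility of $K$ with that structure, as recorded in Theorem~\ref{thm:monoidal}. First I would recall how products in $K$-theory are computed: since $K\colon\Assemb\rto\Sp$ is monoidal, for assemblers $\C$ and $\D$ the multiplication on $K(\C)$ (when $\C$ is symmetric monoidal with multiplication $\mu\colon\C\sma\C\rto\C$) is the composite $K(\C)\sma K(\C)\xrightarrow{\ \simeq\ }K(\C\sma\C)\xrightarrow{\ K(\mu)\ }K(\C)$, where the first map is the lax (here strong, on $\pi_*$) monoidal structure map. So the product $[A][A']$ is the image under $K(\mu)$ of the external product $[A]\boxtimes[A']\in K_0(\C\sma\C)$.

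Next I would identify the external products explicitly. The pair $(A,A')$ is an object of $\C\sma\C$ (it is noninitial precisely when both $A$ and $A'$ are, which is the relevant case), and by the construction of the monoidal structure map for $K$ the class $[A]\boxtimes[A']\in K_0(\C\sma\C)$ is represented by the object $(A,A')$; similarly $[A]\boxtimes[B,\sigma]\in K_1(\C\sma\C)$ is represented by the pair $\big((A,B),(1_A,\sigma)\big)$, using the description of $K_1$-elements in Theorem~\ref{thm:K1} together with the fact that $1_A$ is an automorphism of $A$ in $\C$. This is the step where one must be a little careful: one needs that the monoidal structure map on $K$ sends the pair of generators $([A],[A',\sigma'])$ to the ``product'' generator in $K(\C\sma\C)$ in the evident way, i.e.\ that it is built from the canonical bi-exact-type pairing $\C\times\C\rto\C\sma\C$. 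I expect this to be exactly the content of the construction in Section~\ref{sec:Kmonoidal} and hence citable.

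Then applying $K(\mu)$, which is a morphism of assemblers and so induces maps on all $K_n$ sending $[X]\mapsto[\mu(X)]$ and $[X,\tau]\mapsto[\mu(X),\mu(\tau)]$ (the latter because a morphism of assemblers is a functor, hence carries automorphisms to automorphisms and commuting squares to commuting squares, so the relations of Theorem~\ref{thm:K1} are respected), we get
\[
[A][A']=K(\mu)\big([(A,A')]\big)=[\mu(A,A')]
\]
and
\[
[A][B,\sigma]=K(\mu)\big([(A,B),(1_A,\sigma)]\big)=[\mu(A,B),\mu(1_A,\sigma)],
\]
which is the claim. The main obstacle is purely bookkeeping: pinning down that the abstract monoidal structure map $K(\C)\sma K(\C)\rto K(\C\sma\C)$ has the concrete effect on the generators $[A]$ and $[B,\sigma]$ asserted above. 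Once Section~\ref{sec:Kmonoidal} provides that the monoidal transformation is induced by the obvious ``pair of objects'' functor $\C\times\C\to\C\sma\C$, the rest is a direct application of functoriality of $K$ on morphisms of assemblers and of the $K_0$- and $K_1$-descriptions already in hand, with no real calculation.
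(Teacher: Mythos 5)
Your overall strategy---factor the product as the external pairing $K(\C)\sma K(\C)\rto K(\C\sma\C)$ followed by $K(\mu)$, identify the external product of generators, then use functoriality of $K$ along the morphism of assemblers $\mu$---is exactly how the paper organizes the argument, and the $K_0\times K_0$ case and the final application of $K(\mu)$ are fine. The problem is the step you yourself flag and then wave away: the claim that the pairing sends $[A]\otimes[B,\sigma]$ to the class of $\bigl((A,B),(1_A,\sigma)\bigr)$ in $K_1(\C\sma\C)$. You say you expect this to be ``exactly the content of the construction in Section~\ref{sec:Kmonoidal} and hence citable,'' but that section only constructs the monoidal/$E_\infty$ structure on the spectra (via bipermutative structures on $\W(\C)$ and strictification); it nowhere computes the effect of the structure map on $\pi_1$-classes. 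Knowing that the pairing is induced by the evident functor $\C\times\C\rto\C\sma\C$ does not by itself determine what it does to an element of $\pi_1$ of a smash product of spectra represented by an automorphism: Theorem~\ref{thm:K1} gives you elements and relations in each factor, but no mechanism for evaluating the external product on them.

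This is precisely the gap the paper fills in Section~\ref{sec:K1}. There (Theorem~\ref{thm:K1mult-real}) the identification of $\phi_*\bigl([C]\otimes[A,\sigma]\bigr)$ is proved using Muro--Tonks's stable quadratic module model for the $1$-type of a Waldhausen category, whose generators are objects and weak equivalences and which is compatible with biexact pairings; it is this compatibility, applied to the biexact functor $\SC(\C)\times\SC(\D)\rto\SC(\C\sma\D)$, that yields the formula on $K_1$-generators, and the proposition then follows by composing with $K(\mu)$ exactly as you do. So your proof is incomplete as written: to close it you must either invoke Theorem~\ref{thm:K1mult-real} (in which case the proposition is immediate and your remaining steps are redundant bookkeeping) or reproduce an argument of that kind, i.e.\ supply a model of the $1$-type in which the multiplicative structure is computable on representatives.
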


\section{Examples} \label{sec:ex}

\subsection{Finite $G$-sets} \label{ex:finG}

\begin{notation}
  Let $G$ be a group.  Denote by $C_G$ a set of representatives for conjugacy
  classes of subgroups of $G$.
\end{notation}

Let $G$ be a finite group, and let $\Fin_G$ be the assembler whose objects are
finite $G$-sets, and whose morphisms are $G$-equivariant injections.  The
topology on the assembler is generated by mutually surjective families.

Every finite $G$-set is a disjoint union of its $G$-orbits.  Since all morphisms
in $\Fin_G$ are injective, the image of any $G$-orbit is an isomorphic
$G$-orbit.  By picking a point in an orbit, we see that any $G$-orbit is
isomorphic to $G/H$ for some subgroup $H$, and $G/H$ is isomorphic to $G/H'$ if
and only if $H$ and $H'$ are conjugate in $G$.  Let $\Fin_G^{[H]}$ be the full
subassembler of $\Fin_G$ whose objects are disjoint unions of $G$-orbits, each
of which is isomorphic to $G/H$.  Then
\[\Fin_G \cong \prod_{H\in C_G} \Fin_G^{[H]}.\]
Since $K$-theory of assemblers commutes with finite products, in order to
analyze the $K$-theory of $\Fin_G$ it is sufficient to understand the $K$-theory
of each $\Fin_G^{[H]}$.

Fix $H\in C_G$.  Let $S\in \Fin_G^{[H]}$ have exactly one $G$-orbit.  Then every
other object in $\Fin_G^{[H]}$ is isomorphic to a disjont union of copies of
$S$, and the automorphism group of $S$ is $W_GH$, the Weyl group of $H$ in $G$.
Thus by d\'evissage for assemblers \cite[Theorem B]{Z-Kth-ass}
$K(\Fin_G^{[H]}) \simeq K(\S^\Assemb_{W_GH}) \simeq \Sigma^\infty_+ B(W_GH)$.
Putting this together, we have
\begin{equation} \label{eq:KGSet}
  K(\Fin_G) \simeq \prod_{H\in C_G} \Sigma^\infty_+ B(W_GH).
\end{equation}

The Cartesian product of finite $G$-sets produces a functor
$\mu:\Fin_G \sma \Fin_G \rto \Fin_G$.  This preserves the initial object and
disjointness by definition, so in order to check that it is a morphism of
assemblers it suffices to show that it takes covering families to covering
families; in particular, it suffices to check that for covering families
$\{f_i:A_i \rto A\}_{i\in I}$ and $\{g_j:B_j \rto B\}_{j\in J}$, the family
\[\{(f_i,g_j): A_i \times B_j \rto A\times B\}_{(i,j)\in I\times J}\] is a
covering family.  This is true because for any $(a,b)\in A\times B$, if we
choose $i\in I$ such that $a\in \im f_i$ and $j\in J$ such that $b\in \im g_j$,
$(a,b)$ will be in the image of $(f_i,g_j)$, as desired.

By definition,
\[K_0(\Fin_G) \cong A(G),\]
the Burnside ring of $G$; the monoidal structure above extends the ring
structure of the Burnside ring to the higher $K$-groups.  

To finish up this section we want to make a couple of observations which will be
useful when we discuss almost-finite sets in Section~\ref{sec:afsets}.  Suppose
that $G = \Z/p^n$.  We define $c_i: \Fin_{\Z/p^n} \rto \Fin$ by
\[c_i(S) \defeq \big\{s\in S \,\big|\, |G\cdot s| \leq p^i\big\}.\]
The map $\prod_{i=0}^n c_i: \Fin_G \rto \prod_{i=0}^n \Fin$ is the ``ghost
coordinate'' map: on $K_0$ it takes a $G$-set to coordinates that add/multiply
coordinatewise, and all information about the relative sizes of the $G$-orbits
can be recovered from them.

This can also be coordinatized in an alternate manner, using ``Burnside
coordinates.''  Define an ``orbit counting map'' $b_i: \Fin_{\Z/p^n} \rto \Fin$
by
\[b_i(S) \defeq \big\{s\in S \,\big|\, |G\cdot s| = p^i\big\}_G.\] Note that the
relationship between $|b_i(S)|$ and $|c_i(S)|$ looks closely related to the
relationship between the Witt coordinates and the ghost coordinates:
\[|c_i(S)| = \sum_{j=0}^i p^j |b_j(S)|.\] We can then define
$b = \prod_{i=0}^n b_i: \Fin_{\Z/p^n} \rto \Fin^n$; note that this is an
(additive) isomorphism on $K_0$.\footnote{The terminology ``$b$'' for the
  orbit counting map may seem arbitrary, but it is chosen to agree with the
  ``$b$-coordinates'' defined for the isomorphism $\tau$ from the standard
  presentation of the Witt ring to the Burnside ring described in
  \cite[p7]{dresssiebeneicher89}.}

We get the following commutative diagram:
\begin{diagram}[4em]
  { \Fin^n \sma \Fin^n & \Fin_{\Z/p^n} \sma
    \Fin_{\Z/p^n} & \Fin^n \sma \Fin^n \\
     \Fin^n & \Fin_{\Z/p^n} & \Fin^n \\};
  \to{1-2}{1-1}_{b\sma b} \to{1-2}{1-3}^{c\sma c}
  \to{2-2}{2-1}_b \to{2-2}{2-3}^c 
  \to{1-2}{2-2}^\times \to{1-3}{2-3}^\times 
  \diagArrow{densely dotted, ->}{1-1}{2-1}
\end{diagram}
What must the dotted map be in order to make the diagram commute?  On each pair
of coordinates, the dotted map counts the number of each type of orbit which
appears in the Cartesian product of orbits.  Since $b$ forgets all $G$-action
information, this is functorial and it does not matter which representatives are
taken when the dotted map is defined.  Thus the dotted map exists.  All of the
horizontal maps in ths diagram are isomorphisms on $K_0$, and the conversion
between ``ghost coordinates'' and ``Burnside coordinates'' is the conversion
between the right-hand side of the diagram and the left-hand side of the
diagram.

\subsection{Almost-finite $G$-sets} \label{sec:afsets} Now let $G$ be a
profinite group, and let $S$ be a $G$-set.  $S$ is \emph{almost-finite} if for
all open subgroups $U$ of $G$, $S^U$ is finite, and if for all $x\in S$, the
orbit $G\cdot x$ is finite.  For an in-depth discussion of almost-finite sets
(and spaces), see \cite{dresssiebeneicher88}.  We define $\AFSet_G$ to be the
assembler whose objects are almost-finite $G$-sets and whose morphisms are
$G$-equivariant inclusions.  The topology on $\AFSet_G$ is given by the mutually
surjective covering families.

Although we would like to use d\'evissage for assemblers to compute the
$K$-theory of $\AFSet_G$, this is not directly possible, since an almost-finite
set can be the union of infinitely many different $G$-orbits.  Thus we need to
be a little bit more clever.  For an open subgroup $U$ of $G$, let $\C_{G/U}$ be
the full subcategory of $\SC(\AFSet_G)$ containing only those $G$-sets which are
unions of orbits isomorphic to $G/U$.  Each such set is a finite disjoint union
of copies of $G/U$, and thus $\C_{G/U} \cong \SC(\S_{W_GU})$.  Thus
\[\SC(\AFSet_G) \simeq \prod_{\substack{\mathrm{conj.\ class}\\U \leq G}}
\C_{G/U} \cong \prod_{\substack{\mathrm{conj.\ class}\\U \leq G}}
\SC(\S_{W_GU}).\]  For any conjugacy class $U$, the functor projecting to the
$U$-coordinate is a morphism of assemblers, so induces a map on $K$-theory; in
particular after applying $K$-theory there exists a map
\begin{equation} \label{eq:decomp}
  K(\AFSet_G) \rto^\psi \prod_{U\in C_G}
  \Sigma_+^\infty B(W_GU).
\end{equation}
Write $\psi_U: K(\AFSet_G) \rto \Sigma_+^\infty BW_GU$ for the projection onto
the $U$-th coordinate.

The ring $K_0(\AFSet_{G})$ is exactly the Burnside ring of $G$, so $\psi$
induces an isomorphism on $K_0$.  By \cite[Corollary 1 and Theorem
2.12.7]{dresssiebeneicher89}, the Burnside ring is isomorphic to the big Witt
ring, where the coordiates can be considered to be the ``orbit counting'' maps
(analogous to $b_i$ in the previous section).  Addition is represented by the
usual disjoint union of sets, while multiplication is given by Cartesian product
of almost-finite sets, with the unit the singleton set with the trivial
$G$-action.  Note that this multiplication, while induced by a morphism of
assemblers $\AFSet_G \sma \AFSet_G \rto \AFSet_G$ is not induced by
multiplications on components $\S_{G/U} \sma \S_{G/U} \rto \S_{G/U}$,
illustrating that the decomposition above is not compatible with the
multiplicative structure.  (More concretely: the product of two $G/U$-orbits
decomposes into separate orbits; it cannot be modeled by a product of singleton
sets.)  This is where the interesting multiplicative properties of Witt vectors
come from.

To illustrate this last observation, consider the case $G = \Z_p$ and the maps
$b$ and $c$ defined in the previous section.  There is an analogous commutative
diagram
\begin{diagram}
  {\Fin^{\NN} \sma \Fin^{\NN} & \Fin_{\Z_p} \sma \Fin_{\Z_p} &
    \Fin^\NN \sma \Fin^\NN \\
    \Fin^{\NN} & \Fin_{\Z_p} & \Fin^{\NN} \\};
  \to{1-2}{1-1}_{b\sma b} \to{1-2}{1-3}^{c\sma c}
  \to{2-2}{2-1}_b \to{2-2}{2-3}^c
  \to{1-3}{2-3}^\times \to{1-2}{2-2}^\times \diagArrow{densely dotted,->}{1-1}{2-1}
\end{diagram}
Again, the multiplication induced on the $b$-coordinates is not a simple
coordinate-wise one, as the product of two orbits of size $p^i$ (for example) is
not a single orbit of size $p^i$.  The formula for the relationship between the
$c$-coordinates and the $b$-coordinates is exactly that between the ghost
coordinates and the Burnside ring.  If we compose with the canonical isomorphism
between the Burnside ring and the Witt ring, this becomes the standard
coordinate transformation between the Witt ring and the ghost coordinates.  For
more details, see \cite{dresssiebeneicher88}.

Moreover, these two separate perspectives illustrate where many of the strange
multiplicative properties of the Witt vectors come from.  The Witt coordinates
are closely related to the Burnside coordinates; however, under multiplication,
the Burnside coordinates do not simply multiply.  Consider that the product of a
single orbit of size $p^i$ and an orbit of size $p^j$ (with, WLOG, $j \geq i$)
is $p^i$ orbits of size $p^j$; thus the product of a vector with a single $1$ in
position $i$ with a vector with a single $1$ in position $j$ is a vector with a
$p^i$ in position $j$.  

\begin{remark}
  In the discussion above it is tempting to use a result showing that $K$-theory
  commutes with infinite products, such as \cite{carlsson95} or
  \cite{kasprowskiwinges20}, to conclude that $\psi$ is an isomorphism.
  Unfortunately, the $K$-theory of assemblers cannot directly use either of
  these results---the former assumes Waldhausen categories with cylinder
  functors (which $\SC(\C)$ does not have) and the latter assumes exact
  categories---and thus these results are not accessible to us at this point.

  As we expect that $K$-theory of assemblers does commute with infinite
  products, we conjecture that the map $\psi$ is actually a weak equivalence.
\end{remark}

\subsection{Varieties} \label{sec:varieties} Let $\Var_k$ be the assembler of
varieties over a base field $k$.  The morphisms in the assembler are locally
closed embeddings; the topology is generated by the coverage
$\{Y \rcofib X, X \bs Y \rto X\}$, where $Y \rcofib X$ is a closed embedding.
This example is discussed in detail in \cite[Section 5.1]{Z-Kth-ass}.  The fiber
product (over $\Spec k$) of varieties induces a product structure on $\Var_k$,
with $\Spec k$ as the unit.  This produces an $E_\infty$-ring structure on
$K(\Var_k)$ which on $K_0$ gives the Grothendieck ring of varieties.

If we would like to work over a base Noetherian scheme $S$, instead of a base
field, an analogous construction works with the ring structure given by taking
the fiber product over $S$ (in which case $S$ is the unit).  (Again, we assume
that varieties over $S$ are reduced separated schemes of finite type over $S$.)
It is important to note that the technical condition in the following lemma
requires \textbf{equalities} and not \textbf{canonical isomorphisms}; the
desired formulas always hold up to canonical isomorphism.
\begin{lemma}
  Let $f:T \rto S$ be a morphism of schemes.  Base change along $f$ induces a
  symmetric monoidal morphism of assemblers $\Var_S \rto \Var_T$.  In
  particular, $K_*(\Var_S) \rto K_*(\Var_T)$ is ring homomorphism.
\end{lemma}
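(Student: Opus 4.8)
The plan is to check directly that base change defines a morphism of assemblers, to equip it with the structure of a symmetric monoidal morphism in the sense of Definition~\ref{def:symmonass} (relative to the $\sma$-monoidal structures on $\Var_{S}$ and $\Var_{T}$ given by fibre product over the respective bases), and then to invoke Theorem~\ref{thm:monoidal} to pass to $K$-theory. First I would fix a construction of fibre products of schemes (on affines via tensor products of rings, glued along chosen atlases) so that the assignment $f^{*}\colon\Var_{S}\rto\Var_{T}$ sending a variety $X$ to $(X\times_{S}T)_{\mathrm{red}}$, and a locally closed immersion to its base change followed by reduction, is strictly functorial. That $f^{*}$ is then a morphism of assemblers is routine: base change preserves finite type and separatedness and reduction restores reducedness, so $f^{*}$ lands in $\Var_{T}$; it sends $\initial$ to $\initial$ because $\emptyset\times_{S}T=\emptyset$; it preserves pullbacks---hence disjointness, and hence also membership in $c\Assemb$---because base change commutes with fibre products over the target and reduction is a homeomorphism on underlying spaces; and it is continuous because for a closed immersion $Y\rcofib X$ the base change $Y\times_{S}T\rcofib X\times_{S}T$ is again a closed immersion with open complement $(X\bs Y)\times_{S}T$, so the generating coverage $\{Y\rcofib X,\ X\bs Y\rto X\}$ of $\Var_{S}$ is carried to a covering family of $\Var_{T}$.

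The content of the lemma is the compatibility with the $\sma$-monoidal structure. Writing $\mu_{S},\mu_{T}$ for the multiplication functors (fibre product over the base) and $S,T$ for their units, I would exhibit the comparison data: a natural isomorphism $f^{*}(X)\times_{T}f^{*}(Y)\cong f^{*}(X\times_{S}Y)$, obtained by identifying both sides with the reduced triple fibre product $(X\times_{S}Y\times_{S}T)_{\mathrm{red}}$, together with the identification $f^{*}(S)=(S\times_{S}T)_{\mathrm{red}}=T$ of the image of the unit with the unit. I would then verify that this data intertwines the associativity, symmetry, and unit constraints of the two symmetric monoidal assemblers. This is a diagram chase that reduces to the coherence of the canonical comparison maps among iterated fibre products---Mac Lane coherence for the fibre-product pseudofunctor---together with the observation that reduction is compatible with all of these maps, being a homeomorphism on underlying spaces between reduced schemes.

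The main obstacle, and the point of the remark preceding the lemma, is that Definition~\ref{def:symmonass} asks a symmetric monoidal morphism to respect the monoidal data via genuine \emph{equalities} of the relevant structure morphisms, whereas a priori base change commutes with fibre products and with the unit only up to canonical isomorphism. I would resolve this using the freedom in the chosen model of $\times_{T}$: having fixed the atlas-level construction above, one arranges that the base change $(X\times_{S}Y)\times_{S}T$ is \emph{literally} the fibre product $(X\times_{S}T)\times_{T}(Y\times_{S}T)$ of the base changes, and that $S\times_{S}T=T$ on the nose, so that the comparison isomorphisms of the previous paragraph become identities precisely where the definition demands it; the remaining coherence identities then hold because they already hold, as equalities, for the chosen model of fibre products of rings. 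With $f^{*}$ thus a symmetric monoidal morphism of symmetric monoidal assemblers, Theorem~\ref{thm:monoidal} gives that the induced map $K_{*}(\Var_{S})\rto K_{*}(\Var_{T})$ is a ring homomorphism, which is the assertion of the lemma.
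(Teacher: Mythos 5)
Your argument follows essentially the same route as the paper's: verify that base change is a morphism of assemblers (it preserves the initial object, disjointness, and the generating coverage of closed/open decompositions), then supply the comparison isomorphisms $f^*(X)\times_T f^*(Y)\cong f^*(X\times_S Y)$ and $f^*(S)\cong T$ and check coherence; the paper's proof consists precisely of recording that these two diagrams commute up to natural isomorphism. Where you diverge is your final paragraph, and it rests on a misreading: Definition~\ref{def:symmonass} does \emph{not} ask a symmetric monoidal morphism to respect the structure via equalities --- it asks only for a natural transformation $\nu_{A,B}\colon \mu_\D(F(A),F(B))\to F(\mu_\C(A,B))$ and a map $\epsilon$ satisfying the coherence conditions of a symmetric monoidal functor --- so your strictification step is unnecessary. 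The ``equalities, not canonical isomorphisms'' warning in the surrounding text concerns producing an honest monoid object in assemblers (the input to the bipermutative machinery); the weak structure you and the paper actually verify is handled by the strictification $\hat\C$ inside the proofs of Theorems~\ref{thm:Einfty} and~\ref{thm:ringhom}, which is what makes the up-to-isomorphism check sufficient for the lemma. Do note, however, that if equalities genuinely were required, your sentence asserting that ``one arranges'' the model so that $(X\times_S Y)\times_S T$ is literally $(X\times_S T)\times_T(Y\times_S T)$ and $S\times_S T=T$ would be carrying the entire weight of the proof and is not justified as stated: the paper only sketches such a rigid model for affine schemes (via a bespoke tensor-product construction, with d\'evissage reducing to the affine subassembler) in the example following the lemma, and arranging all of these equalities simultaneously and functorially for arbitrary varieties is exactly the kind of coherence problem that the weak framework is designed to sidestep.
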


\begin{proof}
  Base change is a morphism of assemblers because it preserves disjointness and
  covering families in the generating coverage. 
  To check that it is a symmetric monoidal map it suffices to check that the
  following diagrams commute up to natural isomorphism:
  \[
    \begin{inline-diagram}[3em]
      { \Var_S \sma \Var_S & \Var_T \sma \Var_T \\
        \Var_S & \Var_T \\};
      \arrowsquare{f'\sma f'}{\mu}{\mu}{f'}
    \end{inline-diagram}
    \qqand
    \begin{inline-diagram}
      { \S & \Var_S \\
        & \Var_T. \\};
      \to{1-1}{1-2}^\eta \to{1-1}{2-2}_\eta \to{1-2}{2-2}^{f'}
    \end{inline-diagram}
  \]
  Since $f'$ maps $S$ to $T$, the right-hand diagram commutes.  Since both base
  change and the multiplication are via fiber products, the left-hand diagram
  commutes up to natural isomorphism.
\end{proof}

\begin{remark}
  It may seem that this lemma is overly-complicated: the given formulas always
  hold up to unique isomorphism, and for most purposes this is sufficient.
  However, this is \emph{not} sufficient in the current case: in order to define
  a monoid object in a category, the given diagrams must commute \emph{exactly},
  not simply up to unique isomorphism.  This is, in fact, precisely the problem
  that $\infty$-categories are designed to address: situations where coherence
  issues occur because of imprecise commutativity.  In the current situation it
  ought to be the case that the weaker commutativity should be sufficient, and
  that the map $K(\Var_S) \rto K(\Var_T)$ should be $E_\infty$ regardless of
  which model is taking.  However, proving this would require keeping track of
  the $E_\infty$-operad structure through the $K$-theoretic machinery, which can
  be quite complicated.  (See for example \cite{elmendorfmandell} for a
  description of this; note that their example is not sufficient for the current
  application, and would need to be weakened further, as the current situation
  would not be bipermutative.)  We therefore take the alternate track of
  focusing on the special case of interest to us.
\end{remark}

\begin{example}
  Let $A$ and $B$ be rings, with a map $f: \Spec B \rto \Spec A$.  Write
  $\tilde \Var_A$ for the full subassembler of $\Var_{\Spec A}$ of reduced
  separated affine schemes of finite type over $\Spec A$.  Since all varieties
  have a finite disjoint covering family by affines, the inclusion
  $\tilde \Var_A \rto \Var_{\Spec A}$ induces an equivalence on $K$-theory (by
  d\'evissage, \cite[Theorem B]{Z-Kth-ass}).  Define the assembler
  $\tilde \Var_B$ analogously.

  The multiplication and the base change to working over $B$ on $\tilde \Var_A$
  can be modeled by the tensor product of $A$-algebras, and it suffices to
  define a tensor product for which the formulas $A\otimes_A B = B$ and
  \[(R\otimes_A R')\otimes_A B = (R\otimes_A B)\otimes_B (R' \otimes_A B)\]
  hold.  (Note, again, that these are equalities, and not isomorphisms; these
  will of course always be canonically isomorphic.)

  For example, here is a method for building such a model.  Pick a tensor
  product functor on $\tilde \Var_A$.  For every $A$-algebra $R$ there is a
  $B$-algebra $R\otimes_AB$ which is generated by pairs $(r,b)$.  Define a
  tensor product on $\tilde \Var_B$ by defining
  $(R\otimes_A B) \otimes_B (R'\otimes_A B)$ to be generated by classes of
  triples $(r,r',b)$ modulo the necessary relations.  (This differs
  from the usual definition: usually we would take pairs of pairs
  $((r,b),(r',b'))$ and define the appropriate relations on those.)  Thus, on
  the subcategory of those modules which are in the image of base change from
  $\tilde\Var_A$, we impose the formula by definition.
\end{example}

In a more combinatorial example, we can construct a derived $\zeta$-function.  
Let $k$ be a finite field, $\Var_k$ be the assembler of varieties over $k$, and
$\AFSet_{\hat \Z}$ be the assembler of almost-finite $\hat \Z$-sets.  There is a
morphism of assemblers
\begin{equation} \label{eq:zeta}
  \zeta: \Var_k \rto \AFSet_{\hat Z} \qquad \hbox{given by } X \rgoesto X(\bar
  k).
\end{equation}

\begin{lemma} \label{lem:zetamon} The morphism of assemblers $\zeta$ is a
  symmetric monoidal morphism, in the sense that the following diagrams commute
  up to natural isomorphism:
  \[
    \begin{inline-diagram}
      {\Var_k \sma \Var_k & \AFSet_{\hat \Z} \sma \AFSet_{\hat \Z} \\
        \Var_k & \AFSet_{\hat \Z}\\};
      \arrowsquare{\zeta \sma \zeta}{\mu}{\mu}{\zeta}
    \end{inline-diagram}
    \qqand
    \begin{inline-diagram}
      {\S^\Assemb & \Var_k \\ & \AFSet_{\hat \Z}.\\};
      \to{1-1}{1-2}^\eta \to{1-1}{2-2}_\eta \to{1-2}{2-2}^\zeta
    \end{inline-diagram}\]
  In particular, $K_*(\zeta)$ is a ring homomorphism.
\end{lemma}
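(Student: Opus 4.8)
The plan is to verify the two diagrams directly on objects and then invoke Theorem~\ref{thm:monoidal} (via Theorem~\ref{thm:maina}) to promote this to the statement about $K$-theory. The key point is that $\zeta$ has already been checked to be a morphism of assemblers, so the only remaining content is the compatibility with the monoidal structures, i.e.\ that the two displayed diagrams commute up to natural isomorphism. For the triangle on the right, I would observe that the unit of $\Var_k$ is $\Spec k$ (as a one-object image of the terminal object of $\S^\Assemb$) and the unit of $\AFSet_{\hat\Z}$ is the one-point set with trivial $\hat\Z$-action; since $(\Spec k)(\bar k)$ is a single point and the Galois action on it is trivial, the triangle commutes on the nose, hence certainly up to natural isomorphism.

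For the square on the left, I would unwind both composites on an object $(X,Y)$ of $\Var_k\sma\Var_k$. Going down then right gives $(X\times_k Y)(\bar k)$ with its natural $\hat\Z=\Gal(\bar k/k)$-action; going right then down gives $X(\bar k)\times Y(\bar k)$ with the diagonal Galois action. The natural isomorphism is the canonical bijection
\[
(X\times_k Y)(\bar k)\;\xrightarrow{\;\cong\;}\;X(\bar k)\times Y(\bar k)
\]
coming from the universal property of the fiber product, which is visibly $\Gal(\bar k/k)$-equivariant because the Galois action on $\bar k$-points is induced functorially from the action on $\bar k$ and the fiber product is taken over the fixed base $\Spec k$. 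I would also note that this bijection is natural in locally closed immersions in each variable (base change of a locally closed immersion along $\operatorname{pr}$ is again one, and the bijection is compatible with the inclusions of $\bar k$-points), so it assembles into a natural isomorphism of functors $\Var_k\sma\Var_k\rto\AFSet_{\hat\Z}$. One small check to record is that the target is genuinely landing in $\AFSet_{\hat\Z}$: $X(\bar k)$ is almost-finite because $X(\bar k)^U = X(\FF_{q^n})$ is finite for the open subgroup $U$ of index $n$ and every $\bar k$-point has finite (Galois) orbit, and a finite product of almost-finite sets is almost-finite, so both composites are well-defined as stated.

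Finally, once both diagrams are known to commute up to natural isomorphism, $\zeta$ is a symmetric monoidal morphism of symmetric monoidal assemblers in the sense of the paper, and Theorem~\ref{thm:monoidal} then yields that $K_*(\zeta)\colon K_*(\Var_k)\rto K_*(\AFSet_{\hat\Z})$ is a ring homomorphism. I do not anticipate a genuine obstacle here: the only mildly delicate point is confirming that naturality of the fiber-product bijection holds at the level of the ambient categories (not just for isomorphisms), but this is routine since all morphisms in both assemblers are monic and the bijection is induced by an honest natural transformation of representable-set-valued functors. The heart of the argument is simply the observation that taking $\bar k$-points converts fiber product over $\Spec k$ into Cartesian product of $\Gal(\bar k/k)$-sets, which is classical.
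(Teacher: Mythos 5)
Your proposal is correct and follows essentially the same route the paper takes (compare the analogous base-change lemma for $\Var_S \rto \Var_T$): check the unit triangle on the nose since $(\Spec k)(\bar k)$ is a one-point set with trivial Galois action, and check the multiplication square via the canonical $\Gal(\bar k/k)$-equivariant bijection $(X\times_k Y)(\bar k) \cong X(\bar k)\times Y(\bar k)$, which is natural in locally closed immersions, then invoke the monoidality theorem to get the ring homomorphism on $K_*$. The extra verification that $X(\bar k)$ is almost finite is a sensible inclusion and consistent with the paper's setup.
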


\section{Detecting non-permutative elements in $G$-sets} \label{sec:Gsets}

In this section we discuss how to use the theory developed above to detect
nontrivial elements in $K_1$ of an assembler.  The particular elements we are
concerned with are the ``non-permutative'' elements:

\begin{definition}
  Let $E$ be a connective $E_\infty$-ring spectrum with unit map $\S \rto E$.
  An element in $\pi_nE$ is \emph{$0$-dimensional} if it is in the image of
  $\pi_n\S \rto \pi_nE$.  An element in $\pi_nE$ is \emph{permutative} if it is
  in the image of the map
  \[\pi_0E \otimes \pi_n\S \rto \pi_0E \otimes \pi_n E \rto \pi_n E.\]
  If an element is not permutative then it is \emph{non-permutative}.
\end{definition}

In \cite{CWZ-zeta} the authors showed that there exist elements in $K_n(\Var_k)$
which are not $0$-dimensional; however, the question of whether there exist
non-permutative elements was left open.  The goal of the rest of this paper is
to show that such elements exist in $K(\Var_k)$.  The existence of
non-permutative elements in the higher homotopy of $E$ demonstrates that $E$ is
not uniquely determined by $\pi_0E$ and the higher homotopy groups of $\S$.  In
particular, we will use it to demonstrate that the higher $K$-groups of $\Var_k$
contain nontrivial information about the geometry of varieties.

The important theorem for determining that certain elements are non-permutative
is the following theorem, whose proof is put off until later:
\begin{theorem} \label{thm:K1mult} For a closed symmetric monoidal assembler
  $\C$, $[X]\in K_0(\C)$, and $[A,\tau]\in K_1(\C)$,
  \[[X] [A,\tau] = [X\times A, 1 \times \tau] \in K_1(\C).\]
\end{theorem}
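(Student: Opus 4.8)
The plan is to reduce the statement to the two relations on $K_1$ recorded in Theorem~\ref{thm:K1} together with the Proposition describing how multiplication by $K_0$-classes acts on generators of $K_1$. Concretely, the Proposition (stated just after Theorem~\ref{thm:monoidal}) already tells us that for a \emph{single} object $X$ with its identity automorphism, $[X][A,\tau] = [\mu(X,A),\mu(1_X,\tau)] = [X\times A, 1\times\tau]$. So the real content of Theorem~\ref{thm:K1mult} is bookkeeping: the element $[X]\in K_0(\C)$ need not be represented by a single object, but only by a formal $\Z$-linear combination of objects modulo the covering-family relations of Theorem~\ref{thm:K0}, and we must check the formula is well-defined and additive in that first variable.

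First I would show the formula is additive in $[X]$. Given a finite disjoint covering family $\{X_i\rcofib X\}_{i\in I}$, pulling it back along $1_X$ (using that $\C$ is closed, so pullbacks exist, and that the $X_i$ are stable under the relevant automorphism in the trivial way since we are using the identity) gives a compatible family of restrictions; by the first bullet of Theorem~\ref{thm:K1}, $[X\times A, 1\times\tau] = \sum_i [X_i\times A, 1\times\tau]$, matching $[X] = \sum_i[X_i]$ in $K_0$. Bilinearity of the product $K_0(\C)\otimes K_*(\C)\to K_*(\C)$ (which holds because $K$ is monoidal, by Theorem~\ref{thm:monoidal}) then lets us extend from single objects to arbitrary elements of $K_0(\C)$: write $[X] = \sum_j n_j[X_j]$, apply the Proposition termwise, and use the first bullet of Theorem~\ref{thm:K1} in reverse to reassemble $\sum_j n_j[X_j\times A, 1\times\tau]$ into $[X\times A, 1\times\tau]$ when the $X_j$ fit into a common covering family; the general case follows by refining, using axiom (R) and the refinement statement in Theorem~\ref{thm:K1} (first bullet) to get a common disjoint refinement of any two presentations.

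Second, I would verify the compatibility with the second bullet of Theorem~\ref{thm:K1}, i.e.\ that $[X\times A, 1\times(\sigma\tau)] = [X\times A,1\times\sigma] + [X\times A,1\times\tau]$, which is automatic since $1\times(\sigma\tau) = (1\times\sigma)\circ(1\times\tau)$; this shows the map $\tau\mapsto[X][A,\tau]$ respects the relations defining these $K_1$-elements, so the identity $[X][A,\tau] = [X\times A,1\times\tau]$ is consistent across all presentations of $[A,\tau]$. The main obstacle, I expect, is purely the coherence/well-definedness argument in the first step: making precise that the $\sma$-product of assemblers interacts correctly with pullbacks of covering families so that the first bullet of Theorem~\ref{thm:K1} can be invoked after taking products, and checking that the natural isomorphism $\mu(X_i,A)\cong X_i\times A$ is compatible with the inclusions into $\mu(X,A)\cong X\times A$ — essentially the same kind of strictness bookkeeping flagged in the Remark following the base-change Lemma in Section~\ref{sec:varieties}. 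Once the product is known to descend to a well-defined, biadditive pairing on $K$-groups (which Theorem~\ref{thm:monoidal} provides), the identity itself is immediate from the Proposition on generators, so I would organize the write-up as: (i) recall the Proposition for single objects; (ii) extend by biadditivity and the covering-family relation; (iii) note consistency with the composition relation.
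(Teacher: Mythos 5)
Your key step is circular. The Proposition stated just after Theorem~\ref{thm:monoidal} is not an independently established fact you may quote: it is a preview, in the review section, of exactly the statement of Theorem~\ref{thm:K1mult} (with different letters), and the paper proves it only as a special case of Theorem~\ref{thm:K1mult-real}. Moreover, the ``bookkeeping'' you identify as the real content --- extending from a single object $X$ to a general class in $K_0(\C)$ --- is not what the theorem asserts: the right-hand side $[X\times A, 1\times \tau]$ only makes sense when $X$ is an actual object of $\C$, so the single-object case \emph{is} the theorem, and that is precisely the part you have assumed rather than proved. Your second step (compatibility with the composition relation of Theorem~\ref{thm:K1}) only checks that the claimed formula is consistent with known relations; it does not establish the formula.

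The genuine difficulty is to identify, in terms of explicit representatives, the image of $[X]\otimes[A,\tau]$ under the pairing $K_0(\C)\otimes K_1(\C)\rto K_1(\C)$ coming from the monoidal structure. Knowing from Theorem~\ref{thm:monoidal} that $K$ is monoidal, hence that a biadditive pairing exists, says nothing about which element of $K_1$ a given product of generators is; for that one needs a model of the $1$-type of the relevant Waldhausen categories that is compatible with the multiplication. This is what the paper supplies: it invokes the Muro--Tonks stable quadratic module presentation of the $1$-type, which is functorial for biexact functors with generators mapping in a predictable way, and applies it to the biexact functor $\SC(\C)\times\SC(\D)\rto\SC(\C\sma\D)$ induced by the monoidal structure (then composes with $\mu$ in the symmetric monoidal case). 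That computation is Theorem~\ref{thm:K1mult-real}, of which Theorem~\ref{thm:K1mult} is a special case. Some generator-level argument of this kind, tracking the product through an explicit presentation of $K_1$, is unavoidable and is entirely missing from your proposal.
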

This is a special case of Theorem~\ref{thm:K1mult-real}.

\subsection{Finite $G$-sets}

First, consider the simple case of finite $G$-sets.  From (\ref{eq:KGSet}) it
follows that
\[K_1(\Fin_G) \cong \bigoplus_{H\in C_G} \Z/2 \oplus (W_GH)^\ab.\] 

By Theorem~\ref{thm:K1mult}, the product of an element in $K_0(\Fin_G)$,
represented as $[A]$ for some finite $G$-set $A$, and the nontrivial element in
$K_1(\Fin)$, represented by $[\{1,2\}, \tau]$ is represented by the element
$[A \amalg A, \tau]$, where $\tau$ swaps the two copies of $A$.  In particular,
neither of the morphisms in this pair has a nontrivial action on any $G$-orbit
of $A$; thus this lands in the subgroup $\bigoplus_{H\in C_G} \Z/2 \oplus 0$.
Conversely, any term in this subgroup is represented, since
$[G/H \amalg G/H, \tau]$ represents the element with a $1$ in the $H$-coordinate
and $0$'s everywhere else.  Thus the non-permutative elements in the group are
exactly those that are outside this subgroup.

\subsection{Almost-finite $G$-sets}

The analysis of the previous subsection can be extended to almost-finite
$G$-sets, although there is again the additional difficulty that we do not know
whether $K$-theory commutes with infinite products.  However, we can show the
following:
\begin{proposition}
  Recall the map $\psi$ from (\ref{eq:decomp}).  All permutative elements
  $\alpha\in K_1(\AFSet_G)$ have
  \[\psi_*\alpha \in \prod_{H \in C_G} Z/2 \oplus 0 \subset \prod_{H\in C_G} \Z/2 \oplus
    (W_GH)^\ab.\]
  Moreover, the subgroup of permutative elements surjects onto this subgroup.
\end{proposition}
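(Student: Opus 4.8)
The plan is to transport the finite-$G$-set analysis through the projection maps $\psi_U$, using that each $\psi_U$ is a morphism of $E_\infty$-ring spectra (it is induced by a symmetric monoidal morphism of assemblers, since the $U$-coordinate projection commutes with Cartesian product on $K_0$ and, more to the point, is a monoidal morphism of assemblers onto $\S_{W_GU}^{\Assemb}$ up to the necessary coherence). First I would observe that $\psi_U$ carries permutative elements to permutative elements: a morphism of $E_\infty$-ring spectra commutes with the unit maps from $\S$ and with multiplication, so it sends the image of $\pi_0E\otimes\pi_n\S\to\pi_nE$ into the corresponding image for the target. Hence if $\alpha\in K_1(\AFSet_G)$ is permutative, $(\psi_U)_*\alpha\in K_1(\S_{W_GU}^{\Assemb})\cong\Z/2\oplus(W_GU)^{\ab}$ is permutative, and by the finite-$G$-set computation in the previous subsection (applied with $G$ trivial, or directly: $K(\S_{W_GU}^{\Assemb})\simeq\Sigma^\infty_+ B(W_GU)$, whose permutative elements in degree $1$ are exactly those landing in the $\Z/2$ summand $\pi_1\S$, because the unit $\S\to\Sigma^\infty_+ B(W_GU)$ hits precisely that summand and multiplying by $\pi_0$ does not change which summand you are in). Running this over all $U\in C_G$ and assembling gives $\psi_*\alpha\in\prod_{H\in C_G}\Z/2\oplus 0$, which is the first assertion.

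For the surjectivity statement, I would exhibit explicit permutative elements mapping onto each basis vector of $\prod_{H\in C_G}\Z/2$. Fix $H\in C_G$ and let $S=G/H$, regarded as an almost-finite $G$-set (it is a single finite orbit, hence almost-finite). Then $[S\amalg S,\tau]\in K_1(\AFSet_G)$, where $\tau$ swaps the two copies, is permutative: by Theorem~\ref{thm:K1mult} it equals $[G/H]\cdot\beta$ where $\beta=[\{1,2\},\tau]$ is the image of the generator of $\pi_1\S\cong K_1(\Fin)$ under the unit map $K(\Fin)\to K(\AFSet_G)$ (the unit map is induced by sending a finite set to the corresponding almost-finite set with trivial $G$-action, and $\{1,2\}\amalg(G/H)\text{-coordinatewise} \cong G/H\amalg G/H$ with the swap). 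So $[S\amalg S,\tau]$ lies in the image of $\pi_0K(\AFSet_G)\otimes\pi_1\S$, i.e.\ is permutative. Then I compute $\psi_*[S\amalg S,\tau]$: the $U$-coordinate records only the orbit-type content relevant to $G/U$, and since neither $\tau$ nor $1_S$ acts nontrivially on any orbit, $(\psi_U)_*$ of this class lands in the $\Z/2$ summand and is nonzero exactly when $U$ is conjugate to $H$ (it records one ``swapped pair'' of $G/H$-orbits). Hence $\psi_*[S\amalg S,\tau]$ is the standard basis vector supported at $H$, and these generate $\prod_{H\in C_G}\Z/2\oplus 0$ (the product here is over conjugacy classes, but each of these elements has finite support, so they span the target as needed for surjectivity onto it).

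The main obstacle I anticipate is the first direction rather than the second: one must be careful that $\psi_U$ really is a \emph{ring} map, i.e.\ that the $U$-coordinate projection $\AFSet_G\to\S_{W_GU}^{\Assemb}$ is a symmetric monoidal morphism of assemblers and not merely a morphism of assemblers — and in fact, as the discussion preceding the proposition stresses, the decomposition $\SC(\AFSet_G)\simeq\prod_U\SC(\S_{W_GU})$ is \emph{not} compatible with the multiplicative structure. So the correct statement is subtler: $\psi_U$ need not be multiplicative on the nose, and one instead argues directly with permutativity. The fix is that one does not need $\psi_U$ to be a ring map; one only needs that $\psi_U$ carries the \emph{specific} image subgroup $\mathrm{im}(\pi_0\otimes\pi_1\S)$ into the image of $\pi_1\S$ in the target. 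For this it is enough that $\psi_U$ is additive, commutes with the unit $\S\to K(-)$ (which it does, since the trivial-action finite set $\{1,2\}$ maps to the trivial-action set in $\S_{W_GU}$, i.e.\ $2$ points with no Weyl-group action, whose class in $K_1$ is the generator of $\Z/2$), and satisfies the module-type formula of Theorem~\ref{thm:K1mult} well enough to see that $\psi_U([X]\cdot\beta)=\psi_U[X]\cdot\psi_U(\beta)$ — and the right-hand side, by Theorem~\ref{thm:K1mult} applied in $\S_{W_GU}^{\Assemb}$, lies in $\Z/2\oplus 0$. Spelling out exactly why $\psi_U$ respects this one product is the delicate point; everything else is bookkeeping with the decomposition $(\ref{eq:decomp})$ and the finite-$G$-set case.
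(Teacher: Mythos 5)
Your proof of the first half leans, at the crucial moment, on the identity $\psi_U([X]\cdot\beta)=\psi_U([X])\cdot\psi_U(\beta)$, which you rightly flag as the delicate point but leave unestablished; in fact it is false, so this route cannot be completed. Take $G=\hat\Z$ and $X=G/U$ for a proper open subgroup $U$: by Theorem~\ref{thm:K1mult}, $[X]\beta=[X\amalg X,\mathrm{swap}]$, whose $U$-coordinate is the external swap of two $G/U$-orbits, i.e.\ the nontrivial element of the $\Z/2$ summand; but $\psi_U(\beta)=0$, since the two-point set with trivial action contains no $G/U$-orbits, so the right-hand side vanishes. This is precisely the incompatibility of $\psi$ with the multiplicative structure that the paper warns about, and it also undercuts your opening claim that $\psi_U$ is induced by a symmetric monoidal morphism. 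The paper's argument (identical to the finite $G$-set case) sidesteps multiplicativity of $\psi$ entirely: a permutative element is a $\Z$-linear combination of classes $[X]\eta$, Theorem~\ref{thm:K1mult} rewrites each such class as the explicit representative $[X\amalg X,\mathrm{swap}]$, an automorphism that permutes orbits without acting inside any orbit, and applying $\psi$ to this representative (via the description of $K_1$-classes in Theorem~\ref{thm:K1}) visibly lands in $\prod_H \Z/2\oplus 0$; no ring-map property of $\psi_U$ is used.

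The surjectivity argument has a second genuine gap, located exactly in your closing parenthetical: the target is a \emph{product} over $C_G$, which is infinite in the cases of interest (again $G=\hat\Z$), and the classes $\psi_*[G/H\amalg G/H,\tau]$, each of finite support, generate only the direct sum $\bigoplus_{H}\Z/2$, which does not surject onto $\prod_{H}\Z/2$. This is where almost-finiteness must actually be exploited: given an arbitrary prescribed element $\{\epsilon_H\}_{H\in C_G}$ of the product, the paper forms the \emph{single} almost-finite $G$-set $A=\coprod G/H$, the coproduct running over those $H$ whose prescribed $\Z/2$-coordinate is to be nontrivial (this is almost finite since, for any open $U$, only finitely many conjugacy classes of subgroups contribute $U$-fixed points), and then the one permutative element $[A\amalg A,\tau]=[A]\eta$ already maps under $\psi$ to $\{\epsilon_H\}$. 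So surjectivity is witnessed elementwise by possibly infinite unions of orbits, not by spanning with finitely supported basis vectors.
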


\begin{proof}
  The proof that the image of any permutative element lies in this group is
  identical to the previous subsection.  That every element in this subgroup is
  represented can be seen by taking an element in the subgroup, which is of the
  form $\{\epsilon_H\}_{H\in C_G}$ with $\epsilon_H = \pm 1$.  Consider the
  almost-finite $G$-set
  $A \defeq \coprod_{\substack{H\in C_G \\ \epsilon_H = 1}} G/H$.  Then
  $[A\amalg A, \tau]$ exactly represents $\{\epsilon_H\}_{H\in C_G}$.
\end{proof}

\section{A combinatorial derived zeta function}   \label{sec:zetamor}

Let $k$ be a finite field, and let $L_n$ be the unique extension of $k$ of
degree $n$.  The local zeta function of a variety $X$ over $k$ is defined by
\[Z(X,t) \defeq \exp \sum_{n \geq 1}
  |X(L_n)|\frac{t^n}{n}.\] There are two other classical ways of writing this
function.  We can think of the function $Z(-,t)$ as taking a variety to a power
series in $t$ with integer coefficients.  (We know that it will have integer
coefficients because we can rewrite the expression above as
$\sum_{n \geq 0} |(\Sym^nX)(k)|t^n$.)  We can see by analyzing the expression
for $Z(X,t)$ that, for a closed embedding $Y \rcofib X$,
\[Z(X,t) = Z(Y,t) Z(X\bs Y,t).\]
Thus $Z(-,t)$ is actually a homomorphism $K_0(\Var_k) \rto (1+t\Z\llbracket
t\rrbracket,\times)$.  If we consider the codomain as the big Witt ring with the
multiplication of Witt vectors, $Z(-,t)$ is a ring homomorphism.

By the discussion in Section~\ref{ex:finG}, the big Witt ring is
$K_0(\AFSet_{\hat \Z})$, and, indeed, we can see that all of the data necessary
to construct $Z(X,t)$ is contained in the $\hat\Z$-structure of $X(\bar k)$.
Thus the morphism of assemblers $\zeta$ in (\ref{eq:zeta})
gives the desired derived map
\[K(\zeta): K(\Var_k) \rto K(\AFSet_{\hat \Z}).\] Since $\zeta$ is a morphism of
monoidal objects, this map induces a map of rings.  Moreover, because this map
is compatible with the unit maps it takes permutative elements to permutative
elements.  In order to demonstrate that an element in $K_1(\Var_k)$ is
non-permutative it therefore suffices to show that it maps to a non-permutative
element in $K_1(\AFSet_{\hat \Z})$,

The map $\psi$ defined in  Section~\ref{sec:afsets} induces a map 
\begin{equation} \label{eq:zeta-decomp}
  K_1(\AFSet_{\hat\Z}) \rto \prod_{n \geq 1} K_1(\S^\Assemb_{\Z/n}) \cong
  \prod_{n \geq 1} \Z/2\oplus \Z/n.
\end{equation}
We write an element in the codomain as
$\prod_{n \geq 1}(\pm 1, g)$.  In each pair we call the first coordinate the
\emph{external} coordinate and the second the \emph{internal} coordinate;
these correspond to the sign of the permutation of $*$'s in the objects of
$\SC(\S^\Assemb_{\Z/n})$ and the action of $\Z/n$, respectively.
\begin{lemma} \label{lem:mult-of-eta}
  Let $[X]\in K_0(\Var_k)$, and let $\eta\in K_1(\Fin)$ be the nonzero element,
  and write $L_j$ for the extension of $k$ of degree $j$.  Let
  \[X_n = \{x\in X(L_n)\,|\, x\notin X(L_m),\ m < n\}.\]
  Then
  \[\psi_* \circ \zeta_*([X]\eta) = \prod_{n \geq 1} ((-1)^{|X_n|/n},0).\]
\end{lemma}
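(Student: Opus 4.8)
The plan is to unwind everything on explicit $K_1$-representatives: rewrite $[X]\eta$ via Theorem~\ref{thm:K1mult}, push it through $\zeta$ and then through the coordinate projection $\psi_n$, and finally identify the resulting class in $K_1(\S^\Assemb_{\Z/n})\cong\Z/2\oplus\Z/n$ by a short computation with transpositions. First, the image of $\eta\in K_1(\Fin)\cong\pi_1\S$ in $K_1(\Var_k)$ under the unit map is represented by $[\Spec k\amalg\Spec k,\tau]$, with $\tau$ the swap of the two copies of $\Spec k$ (this is the ``determinant of the swap'' recalled in the introduction, together with $K(\Fin)\simeq\S$). Applying Theorem~\ref{thm:K1mult} gives
\[[X]\eta=[X\times_k(\Spec k\amalg\Spec k),\,1_X\times\tau]=[X\amalg X,\,s],\]
where $s$ interchanges the two copies of $X$. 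Since $\zeta$ preserves disjoint unions and acts on morphisms by the $\bar k$-points functor, naturality of the $K_1$-representatives of Theorem~\ref{thm:K1} yields
\[\zeta_*([X]\eta)=[X(\bar k)\amalg X(\bar k),\,\bar s]\in K_1(\AFSet_{\hat\Z}),\]
where $\bar s$ interchanges the two copies of the $\hat\Z$-set $X(\bar k)$ and the $\hat\Z$-action is the Frobenius action.

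Next I would evaluate $\psi_n$, the projection of (\ref{eq:decomp}) onto the coordinate indexed by $n\hat\Z$, for which $W_{\hat\Z}(n\hat\Z)=\hat\Z/n\hat\Z=\Z/n$. By construction $\psi_n$ is the projection $\SC(\AFSet_{\hat\Z})\to\C_{\hat\Z/n\hat\Z}$ onto the part supported on orbits of size $n$, composed with the d\'evissage equivalence $\C_{\hat\Z/n\hat\Z}\simeq\SC(\S^\Assemb_{\Z/n})$ under which a single orbit $\hat\Z/n\hat\Z$ corresponds to the noninitial object $*$ and a permutation of orbits corresponds to the same permutation of copies of $*$. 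The Frobenius orbits of size $n$ in $X(\bar k)$ are exactly the degree-$n$ closed points of $X$, and there are $m\defeq|X_n|/n$ of them, where $X_n$ is the set of points of exact degree $n$ (Frobenius acts freely with orbits of size $n$ on $X_n$, so $n\mid|X_n|$). Hence $\psi_n$ sends $[X(\bar k)\amalg X(\bar k),\bar s]$ to $[\coprod_{j=1}^{2m}*,\,s']\in K_1(\S^\Assemb_{\Z/n})$, where the $2m$ copies of $*$ are arranged in two blocks of $m$ and $s'$ swaps the two blocks; note that $s'$ is a pure permutation of orbits and involves no nontrivial element of $\Aut(*)=\Z/n$.

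It remains to compute $[\coprod_{j=1}^{2m}*,s']$ in $K_1(\S^\Assemb_{\Z/n})\cong\Z/2\oplus\Z/n$. Pairing the $j$-th copy of each block gives a finite disjoint covering family whose members are the $m$ subobjects of the form $*\amalg*$, on each of which $s'$ restricts to the transposition; the first relation of Theorem~\ref{thm:K1} then gives $[\coprod_{j=1}^{2m}*,s']=m\cdot[*\amalg*,\tau]$. Now $[*\amalg*,\tau]$ is the image of the generator of $\pi_1\S\cong K_1(\S^\Assemb)$ (the class of the transposition, as in the introduction, via d\'evissage from $\Fin$) under the morphism of assemblers $\S^\Assemb\to\S^\Assemb_{\Z/n}$; on $\pi_1$ this map is the inclusion of $\S$ as a wedge summand of $\Sigma^\infty_+B(\Z/n)$, so $[*\amalg*,\tau]=(1,0)$ in the external/internal coordinates. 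Therefore $\psi_n\zeta_*([X]\eta)=((-1)^{|X_n|/n},0)$, and taking the product over $n\ge1$ gives the claimed formula.

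The step I expect to be the main obstacle is pinning down precisely how $\psi_n$ acts on the chosen $K_1$-representative: that it genuinely ``restricts to the orbits of size $n$'', and that under the d\'evissage equivalence $\C_{\hat\Z/n\hat\Z}\simeq\SC(\S^\Assemb_{\Z/n})$ each degree-$n$ closed point contributes one bare copy of $*$ carrying no internal $\Z/n$-decoration. Once that is settled, the remainder is the bookkeeping with $\zeta$ and the multiplication formula above, together with the elementary fact that a block swap of $m+m$ points has sign $(-1)^m$.
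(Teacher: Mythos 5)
Your proposal is correct and follows essentially the same route as the paper: identify $[X]\eta$ with the block swap on $X\amalg X$ via Theorem~\ref{thm:K1mult}, observe that $\psi_n\circ\zeta_*$ turns this into a block swap of the $|X_n|/n$ Frobenius orbits of exact-degree-$n$ points, and conclude that the class is purely external with sign $(-1)^{|X_n|/n}$. The only real difference is the justification of the last step---the paper factors the class through $K_1(\Fin)$ using $X_n\cong (X_n)_{\Frob}\times\Z/n$ and the assembler map $S\rgoesto S\times\Z/n$, whereas you decompose the block swap into $m$ transpositions inside $K_1(\S^\Assemb_{\Z/n})$ (a step more accurately justified by the disjoint-union relation of Theorem~\ref{thm:Bpi1}, since $\coprod_{j=1}^{2m}*$ is an object of $\W(\S^\Assemb_{\Z/n})$ rather than of the assembler itself, rather than by the covering-family relation of Theorem~\ref{thm:K1})---and both mechanisms yield the same computation.
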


\begin{proof}
  Write $\psi_n$ for the composition of $\psi_*$ and the projection onto the
  coordinate indexed by $n$.  Write $\Fin^{\Z/n}$ for the assembler of finite
  sets with free $\Z/n$-action; then
  $K(\Fin^{\Z/n}) \simeq \Sigma_+^\infty B\Z/n$, and the map
  $\Var_k \rto \Fin^{\Z/n}$ mapping $X$ to $X_n$ gives the $n$-th coordinate of
  the map $\psi_*\zeta_*$.  There is a map of assemblers $\Fin \rto \Fin^{\Z/n}$
  defined by $S \rgoesto S\times \Z/n$, with $\Z/n$ acting on the second
  coordinate.  

  It suffices to show that for all $n$ there exists a
  morphism $\sigma_n$ which makes the following square commute:
  \begin{diagram}
    { K_1(\Var_k)  & K_1(\Fin^{\Z/n}) \\
      K_0(\Var_k) \otimes K_1(\S^\Assemb) & K_1(\Fin). \\};
    \to{1-1}{1-2}^{\psi_n} \to{2-1}{2-2}^{\sigma_n}
    \to{2-1}{1-1}_{\mu} \cofib{2-2}{1-2}
  \end{diagram}
  Pick a generator of $K_0(\Var_k)$, represented by a variety $[X]$ and the
  generator $[*\amalg *, \mathrm{swap}]$ of $K_1(\S^\Assemb)$.  The map $\mu$
  then maps this generator to the generator $[X \amalg X, \mathrm{swap}]$; the
  map $\psi_n$ maps this to the generator
  \[\big[X_n \amalg X_n, \mathrm{swap}\big].\]
  Note that $X_n \cong (X_n)_{\Frob} \times \Z/n$. Thus if we define
  \[\sigma_n([X]\otimes \eta) \defeq \big[(X_n)_{\Frob} \amalg (X_n)_{\Frob},
    \mathrm{swap}\big]\]
  the diagram commutes, as desired.  Moreover, the sign of the induced
  permutation is exactly the sign of swapping $|X_n|/n$ orbits, giving the
  desired formula.
\end{proof}
Since $K_*(\zeta)$ is a ring homomorphism, the image of a permutative element in
$K_*(\Var_k)$ is a permutative element in $K_*(\AFSet_{\hat\Z})$.  In
particular, permutative elements have all internal coordinates equal to $0$.  In
order to find a nonpermutative element it suffices to find an element which has
a nonzero internal coordinate.

By \cite[Theorem B]{Z-ass-pi1}, any automorphism of a variety represents an
element of $K_1(\Var_k)$.  The functor $\zeta$ takes this data to a $\hat\Z$-set
$X(\bar k)$ together with a $\hat\Z$-equivariant permutation; projecting onto
the $\Z/2$-coordinate in (\ref{eq:zeta-decomp}) induces a map
\[\psi_2: K_1(\Var_k) \rto K_1(\S^\Assemb_{\Z/2}) \cong \Z/2\oplus \Z/2.\]
Here, the first coordinate is the external coordinate, and the second is the
internal coordinate.  As before, we write the external coordinate
multiplicatively and the internal coordinate additively.

\begin{definition}
  Let $S$ be a finite set equipped with an action of $\Z/m\oplus \Z/n$.  Suppose
  that this action is free when restricted to both $\Z/m\oplus 1$ and
  $1\oplus \Z/n$.  For a point $x\in S$ write $[x]$ for the orbit of $x$ under
  the $\Z/m\oplus \Z/n$ action.  This orbit has \emph{type $(d,a)$} if
  $(d,0) \cdot x = (0,a) \cdot x$ for $0 \leq a < n$ and $d$ is the minimal
  positive integer for which such an integer $a$ exists.

  Define $\# S_{(d,a)}$ for the number of orbits of type $(d,a)$.

  Let $X$ be a variety over $\FF_q$ equipped with an automorphism $\varphi$
  which acts on $X_n$ freely with order $m$.  We consider $X_n$ to be equipped
  with action of $\Z/m\oplus \Z/n$ by having $(1,0)$ act by $\varphi$ and
  $(0,1)$ act by Frobenius.  Define $\#X^{\varphi,n}_{(d,a)}$ to be the number of orbits of
  type $(d,a)$ in $X_n$; when $n$ and $\varphi$ are clear from context we omit
  them.  
\end{definition}

If there exists an orbit of type $(d,a)$ in $S$ then it is necessarily the case
that $d |m$ and that $\frac md = \frac{n}{(n,a)}$.  In particular, $d = m$ if and only
if $a = 0$.  

We can use this to describe the image of an element $[X,\varphi]\in K_1(\Var_k)$
under $\psi_n$ explicitly in terms of the actions on orbits.

\begin{proposition} \label{prop:stable-unstable}
  Let $X$ be a variety over $k$ and let $\varphi$ be an automorphism of $X$;
  suppose that $\varphi$ acts freely on $X_n$ with order $m$.  For each pair
  $(d,a)$ write $\#X_{(d,a)}$ be the number of orbits of $X_n$ of type $(d,a)$.
  Then 
  \[\psi_n([X,\varphi]) = \sum_{(d,a)} (\#X_{(d,a)})((-1)^{d+1}, a).\]
\end{proposition}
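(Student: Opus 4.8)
The plan is to reduce the claim to the relations in Theorem~\ref{thm:K1} together with the identification of $K_1(\S^\Assemb_{\Z/n})$. The key observation is that the map $\psi_n$ factors through the assembler $\Fin^{\Z/n}$ of finite sets with free $\Z/n$-action (via $X \rgoesto X_n$, as in the proof of Lemma~\ref{lem:mult-of-eta}), so the computation takes place entirely in $K_1(\Fin^{\Z/n}) \cong \Z/2 \oplus \Z/n$. First I would note that since $\varphi$ acts freely on $X_n$ with order $m$, the $\Z/m \oplus \Z/n$-action on $X_n$ (with $(1,0)$ acting by $\varphi$ and $(0,1)$ acting by Frobenius) is free in each factor, so $X_n$ decomposes $G$-equivariantly as a disjoint union $X_n = \coprod_{(d,a)} X_n^{(d,a)}$, where $X_n^{(d,a)}$ is the sub-$G$-set consisting of all orbits of type $(d,a)$, and $\#X_{(d,a)}$ counts the orbits in each piece.

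Next I would apply the first relation of Theorem~\ref{thm:K1}: since the pair $(X_n,\varphi)$ splits as a finite disjoint covering family (indexed by the types $(d,a)$, further refined into individual orbits) with compatible automorphisms, we get
\[
\psi_n([X,\varphi]) = \sum_{(d,a)} (\#X_{(d,a)})\,\psi_n\big([O_{(d,a)}, \varphi|_{O_{(d,a)}}\big]),
\]
where $O_{(d,a)}$ is a single orbit of type $(d,a)$. So it remains to compute the class of a single orbit-with-automorphism in $K_1(\S^\Assemb_{\Z/n})$, i.e.\ to show that $\psi_n([O_{(d,a)},\varphi|_{O_{(d,a)}}]) = ((-1)^{d+1}, a)$. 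A single orbit $O_{(d,a)}$ has size $\frac{mn}{\text{lcm appropriately}}$; forgetting the $\Z/n$-action (which is what $\psi_n$ does to extract the $\S^\Assemb_{\Z/n}$-coordinate), $\varphi$ acts on the underlying $\Z/n$-set as a permutation. The internal coordinate records the element of $\Z/n \cong (W_{\hat\Z}(n\hat\Z))^\ab$ by which $\varphi$ twists the $\Z/n$-action, which by definition of type $(d,a)$ is exactly $a$. The external coordinate is the sign of $\varphi$ as a permutation of the $\Z/n$-orbits in $O_{(d,a)}$: since $O_{(d,a)}$ consists of $\frac{m}{d}$ such $\Z/n$-orbits permuted cyclically by $\varphi$ (as $(d,0)$ is the smallest power of $\varphi$ fixing each $\Z/n$-orbit setwise up to the $\Z/n$-action), $\varphi$ induces a $\frac{m}{d}$-cycle on the set of $\Z/n$-orbits, and using the relation $[A,\sigma]+[A,\sigma'] = [A,\sigma\circ\sigma']$ together with the initial condition for the swap, the sign of an $(m/d)$-cycle is $(-1)^{m/d - 1}$; but since $d|m$ and, for a single orbit of type $(d,a)$, one checks $\frac{m}{d} \equiv d \pmod 2$ from $d=m \iff a=0$ and the parity constraint $\frac{m}{d} = \frac{n}{(n,a)}$, this equals $(-1)^{d+1}$.

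The main obstacle will be the careful bookkeeping in this last step: pinning down exactly how a single orbit of type $(d,a)$ decomposes as a $\Z/n$-set, verifying that $\varphi$ acts on its $\Z/n$-orbits as a single cycle of the right length, and confirming the parity identity $(-1)^{m/d-1} = (-1)^{d+1}$ (equivalently $\frac{m}{d}+d$ even) in all cases — this requires unwinding the definition of type and the constraint $\frac{m}{d} = \frac{n}{(n,a)}$ rather than any deep input. Everything else is a direct application of the additivity relations of Theorem~\ref{thm:K1} and the explicit description of $K_1(\S^\Assemb_{\Z/n})$, so the proof is essentially a reduction to the single-orbit case followed by an elementary permutation-sign computation.
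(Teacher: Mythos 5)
Your overall strategy---decompose $X_n$ into its $\Z/m\oplus\Z/n$-orbits, use the additivity relations to reduce to a single orbit, and then compute the sign and the twist of $\varphi$ acting on that orbit---is exactly the paper's approach. But the single-orbit computation, which is the whole content of the proposition, contains a genuine error. A single orbit of type $(d,a)$ consists of exactly $d$ Frobenius ($\Z/n$-) orbits, not $m/d$ of them: the points $x,\varphi x,\ldots,\varphi^{d-1}x$ lie in distinct Frobenius orbits by minimality of $d$, and $\varphi^d x=\Frob^a x$ returns to the Frobenius orbit of $x$, so the combined orbit has size $dn$ and $\varphi$ induces a $d$-cycle on its $d$ Frobenius orbits, with a twist by $a$ picked up on the wrap-around. (Your own parenthetical ``$(d,0)$ is the smallest power of $\varphi$ fixing each $\Z/n$-orbit setwise up to the $\Z/n$-action'' already says the induced permutation has order $d$, contradicting the claim that it is an $(m/d)$-cycle; the quantity $m/d=n/(n,a)$ is the order of the twist $\Frob^a$ on a single Frobenius orbit, not the cycle length.) With the correct count the external coordinate is $(-1)^{d+1}$ on the nose and no further argument is needed; this is precisely how the paper concludes, writing the automorphism as a cyclic permutation of $d$ orbits composed with a twist by $a$ on one orbit, giving $((-1)^{d+1},0)+(1,a)$.

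The patch you attempt---the parity identity $m/d\equiv d\pmod 2$, which would make $(-1)^{m/d-1}=(-1)^{d+1}$---is false in general and cannot be salvaged. For instance $m=6$, $d=2$, $n=3$, $a=1$ satisfies all the stated constraints ($d\mid m$ and $m/d=n/(n,a)=3$), yet $m/d=3$ is odd while $d=2$ is even, so your formula would output external coordinate $+1$ where the correct value is $-1$. So as written the external-coordinate computation fails; replacing ``$m/d$ Frobenius orbits'' by ``$d$ Frobenius orbits'' both fixes the sign and eliminates the need for any parity bookkeeping. The internal coordinate argument (the total twist is $a$ by definition of type) and the reduction to single orbits are fine and agree with the paper.
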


\begin{proof}
  The element in $K_1(\Fin^{\Z/n})$ is the sum of its actions on orbits, so it
  suffices to consider a single orbit $[x]$ of type $(d,a)$.  This orbit
  consists of $d$ disjoint Frobenius orbits.  If we write $x_i = \varphi^i\cdot
  x$ for $i = 0,\ldots,d-1$ we can think of the $d$ Frobenius orbits as the sets
  $\{x_i,\Frob\cdot x_i, \ldots, \Frob^{n-1}\cdot x_i\}$ then the action of
  $\varphi$ takes the $i$-th orbit to the $i+1$-st orbit with no $\Z/n$-action
  for all $0 \leq i < d-1$; however, when $i = d-1$ it takes the $i$-th orbit to
  the $0$-th orbit with an additional $\Z/n$ action by $a$.  We can thus write
  it as a cyclic permutation of $d$ orbits, followed by a twist by $a$ on a
  single orbit.  Thus the representative in $K_1(\Fin^{\Z/n})$ is
  \[((-1)^{d+1},0) + (1,a) = ((-1)^{d+1}, a),\]
  as desired.

  Summing over all orbits gives the desired formula.  
\end{proof}

Proposition~\ref{prop:stable-unstable} and Lemma~\ref{lem:mult-of-eta} can be
used to find non-permutative elements in $K_1(\Var_{\FF_q})$.  Our first result
is a special case of the above result.

\begin{proposition} \label{prop:su-special} Fix an integer $n > 1$.  Let
  $\lambda \in k^\times$ have order $m$, and define $P_1 = \# (\PP^1)_{(m,0)}$
  and for any other $d | (n,m)$ let $P_d = \# (\PP^1)_{(m/d,n/d)}$; for
  all other $d$ we define $P_d=0$.  Let $\phi$ be the Euler $\phi$-function.
  Then
  \[\psi_n([\PP^1,\lambda x]) = 
      \bigg( (-1)^{P_1(m+1) + P_2(m/2+1)}, \sum_{d|(n,m)}
      P_{d} \phi(d) \frac{n}{d}\bigg),
  \]
  where $\phi$ is the Euler $\phi$-function.  In particular, if $(n,m) = 1$ then
  the second coordinate is $0$.  
\end{proposition}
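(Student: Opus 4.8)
The plan is to apply Proposition~\ref{prop:stable-unstable} with $X=\PP^1$ and $\varphi$ the automorphism $x\mapsto\lambda x$, so that the whole computation reduces to understanding the $\Z/m\oplus\Z/n$-orbits of the degree-$n$ point set $(\PP^1)_n$. First I would verify the hypotheses: since $\lambda\in k^\times$ and $k^\times$ is cyclic, $m=\ord(\lambda)$ divides $|k|-1$, and the only points of $\PP^1$ fixed by $\varphi$ are $0$ and $\infty$, both lying in $(\PP^1)_1$; hence for $n>1$ we have $(\PP^1)_n\subseteq\mathbb{G}_m(\bar k)$, on which $x\mapsto\lambda x$ acts freely with order exactly $m$. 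Thus Proposition~\ref{prop:stable-unstable} applies and $\psi_n([\PP^1,\lambda x])=\sum_{(D,a)}\#(\PP^1)_{(D,a)}\,\big((-1)^{D+1},a\big)$.

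Next I would classify which types $(D,a)$ occur. For $x\in(\PP^1)_n$ the relation $\varphi^D x=\Frob^a x$ reads $x^{q^a-1}=\lambda^D$; since $\varphi^m=1$ we have $D\mid m$, and the structural constraint recorded just before Proposition~\ref{prop:stable-unstable} gives $m/D=n/\gcd(n,a)$. Setting $d=n/\gcd(n,a)$, this forces $d\mid n$, $D=m/d$, and $d\mid m$, hence $d\mid\gcd(n,m)$; moreover $\lambda^{m/d}$ has exact order $d$, i.e.\ it is a primitive $d$-th root of unity in $k^\times$. So the admissible types are the $(m/d,a)$ with $d\mid\gcd(n,m)$ and $\gcd(n,a)=n/d$: for $d=1$ this forces $a=0$ (the type $(m,0)$ counted by $P_1$), and for $d>1$ the admissible $a$ are the $\phi(d)$ residues $(n/d)j$ with $j$ coprime to $d$, one of which is $a=n/d$ (the type counted by $P_d$).

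The heart of the proof is counting the orbits of each admissible type. The degree-$n$ points lying in a type-$(m/d,a)$ orbit are exactly those of exact degree $n$ among the solutions of $x^{q^a-1}=\zeta_d$ for a fixed primitive $d$-th root of unity $\zeta_d$, and each such orbit has $(m/d)n$ points. I would compute these counts by the standard level-set bookkeeping inside the cyclic groups $\mu_{q^N-1}$, taking care to retain only the points whose degree is exactly $n$ and not a proper divisor, and would exhibit a symmetry — arising from the Frobenius-compatible endomorphisms $x\mapsto x^r$ of $\mathbb{G}_m$, equivalently from permuting Galois conjugates — showing that for fixed $d$ the orbit count is independent of the twist $a$ and so equals $P_d=\#(\PP^1)_{(m/d,n/d)}$. (A consistency check: $\sum_{d\mid\gcd(n,m)}\phi(d)\,P_d\,(m/d)n$ must equal $|(\PP^1)_n|=\sum_{j\mid n}\mu(n/j)(q^j-1)$.) This is the step I expect to be the main obstacle: controlling the ``exact degree $n$'' condition while pinning down the dependence of the count on the twist $a$ is precisely what forces the Euler $\phi$-factor.

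Finally I would assemble the formula. The external $\Z/2$-coordinate receives from the $d$-block the exponent $\phi(d)\,P_d\,(m/d+1)$; since $\phi(d)$ is even for every $d\ge 3$, only $d=1$ and $d=2$ survive modulo $2$, giving $(-1)^{P_1(m+1)+P_2(m/2+1)}$. The internal $\Z/n$-coordinate is $\sum_{d\mid\gcd(n,m)}\sum_{a}a\cdot\#(\PP^1)_{(m/d,a)}$, and using the Step~3 symmetry together with the identity $\sum_{j\in(\Z/d)^\times}j=d\phi(d)/2$ this should reduce, modulo $n$, to $\sum_{d\mid\gcd(n,m)}P_d\,\phi(d)\,(n/d)$. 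The last assertion is then immediate: if $\gcd(n,m)=1$ the only admissible $d$ is $1$, whose sole type $(m,0)$ contributes $0$ to the internal coordinate.
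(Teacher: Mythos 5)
Your overall strategy is the same as the paper's: reduce to Proposition~\ref{prop:stable-unstable}, classify the admissible types $(m/d,a)$ with $d\mid(n,m)$ and $\gcd(n,a)=n/d$, argue that for fixed $d$ the orbit count does not depend on the twist $a$, and then assemble using the parity of $\phi(d)$. However, the step you yourself flag as the main obstacle is genuinely missing, and the mechanisms you suggest for it do not work as stated. Replacing $x$ by a Galois conjugate $\Frob^t x$ preserves the type (Frobenius commutes with both actions), so ``permuting Galois conjugates'' cannot move the twist; and a prime-to-$p$ power map $x\mapsto x^r$ converts the defining relation $x^{q^a-1}=\lambda^{m/d}$ into $x^{q^a-1}=\lambda^{rm/d}$, i.e.\ it changes the power of $\lambda$ (and possibly the minimal $D$) rather than the Frobenius exponent $a$. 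The paper's device is different: since the two relevant elements of $k^\times$ have the same order, one chooses primitive roots $g,g'$ of $\FF_{q^n}^\times$ writing them as the same power $c$ of $g$ and of $g'$, and transports solutions along the automorphism $g^y\mapsto (g')^y$ of $\FF_{q^n}^\times$, which preserves multiplicative order, hence exact degree and minimality, and so identifies the two orbit sets. You would need this (or a direct coset count: both solution sets are cosets of $\FF_{q^{n/d}}^\times$, with solvability and low-degree corrections depending only on $d$ because $\gcd(a,j)=\gcd(n/d,j)$ for $j\mid n$) actually carried out.

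There is a second, sharper gap in your final assembly of the internal coordinate, and it is exactly the point where your route diverges from the paper's. Granting the independence claim, your sum is $\sum_{d}P_d\sum_a a$ with $a=(n/d)j$, $j\in(\Z/d)^\times$, and the identity you invoke gives $\sum_a a=n\phi(d)/2$, which is $\equiv 0\pmod n$ for every $d\neq 2$ (the $d=1$ block has $a=0$, and $\phi(d)$ is even for $d\geq 3$). So your bookkeeping lands on $P_2\cdot\frac n2$, not on $\sum_{d\mid(n,m)}P_d\phi(d)\frac nd$; these differ modulo $n$ whenever some block with $d\geq 3$ contributes nontrivially. The paper's proof reaches the stated formula by a different manipulation: it replaces all $\phi(d)$ admissible twists by the single representative $a=n/d$ and multiplies by $\phi(d)$, rather than summing the distinct twists as you do. Hence ``this should reduce, modulo $n$, to\dots'' is not a reduction your cited identity supplies --- as written, your argument proves a different internal coordinate from the one in the statement. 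You must either justify the representative-for-all-twists substitution that the paper performs, or confront the discrepancy between the honest sum over twists and the displayed formula; until that is resolved, the proposal does not establish the proposition.
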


\begin{proof}
  From the formula in Proposition~\ref{prop:stable-unstable}, and writing $X =
  \PP^1$ for conciseness,
  \[\psi_n([\PP^1,\lambda x]) = \sum_{(d,a)} (\#X_{(d,a)}) ((-1)^{d+1},a).\]
  Suppose that $d,a,a'$ are such that both $X_{(d,a)}$ and $X_{(d,a')}$ are
  nonempty.  The necessary conditions on $a$ and $a'$ ensure that there exists a
  constant $c$ and two primitive roots $g,g'$ such that
  \[\lambda^a = g^c \qqand \lambda^{a'} = (g')^c.\]
  With these primitive roots we can construct a bijection between $X_{(d,a)}$
  and $X_{(d,a')}$ in the following manner. Given an orbit $[x]\in X_{(d,a)}$,
  by definition
  \[x^{q^d} = \lambda^a x \Leftrightarrow x^{q^d-1} = \lambda^a.\] Writing
  $x = g^y$ , such a point $x$ is a solution $y$ to the equation
  \[y(q^d-1) \equiv ca \pmod{q^n-1}.\] In particular, this point $x$ corresponds
  to a point $x' \defeq (g')^y$.  This gives a function
  $X_{(d,a)} \rto X_{(d,a')}$; the inverse is given by the reverse choice of
  primitive roots.  This bijection shows that for any two $a$ and $a'$
  satisfying $\frac md = \frac n{(n,a)}$ it is the case that
  $\#X_{(d,a)} = \# X_{(d,a')}$.  Thus in the sum above we can choose
  $a = (n,a) = \frac n{m/d}$ for every $d$ and multiply by the number of choices
  of $a$, which is exactly $\phi(m/d)$.  Moreover, such a choice is possible
  only if $(m/d) \mathrel{|} n$.  Thus the sum can be rewritten as 
  \[\psi_n([\PP^1,\lambda x])  =
    \sum_{\substack{d|m \\ m/d | n}} \phi\left(\frac md\right) \# X_{(d,\frac
      n{m/d})} \left((-1)^{d+1}, \frac md\right) = \sum_{d | (n,m)} \phi(d) \#
    X_{(m/d,n/d)} ((-1)^{m/d+1}, d).\] Since $\phi(d)$ is even unless $d = 1$ or
  $2$, the first coordinate of each summand will almost always be $1$.  Taking
  this into account gives the desired formula.
\end{proof}

Using this we can do a a complete analysis of the image under $\psi_n$ of
$[\PP^1,1/x] = [\PP^1, -x]$, in order to illustrate both the benefits and the
drawbacks of the approach.

\begin{corollary} \label{cor:calc} Let $k = \FF_q$.  Writing $n = 2^m n'$ with
  $n'$ odd,
  \begin{equation} \label{eq:form}
    \psi_n([\PP^1,-x]) =
    \begin{cases}
      \left((-1)^{\frac{q-1}{2}}, 0\right) \caseif n = 1, \\
      \left((-1)^{\frac{q-1}{2}},\frac{q-1}{2}\right) \caseif n=2, \\
      (1,0) \caseotherwise.
    \end{cases}
  \end{equation}
  In particular, if $q \equiv 3 \pmod 4$ then $[\PP^1,-x] = [\PP^1, 1/x]$ is
  non-permutative.
\end{corollary}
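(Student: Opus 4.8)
The plan is to reduce to Proposition~\ref{prop:su-special}, treating $n=1$ by hand. First, the automorphisms $x\mapsto 1/x$ and $x\mapsto -x$ of $\PP^1$ lift to matrices in $\mathrm{GL}_2(k)$ with common characteristic polynomial $\lambda^2-1$, hence (as $q$ is odd, so the roots $\pm1$ are distinct) are conjugate in $\mathrm{PGL}_2(k)=\Aut(\PP^1)$; and by Theorem~\ref{thm:K1} the class $[A,\sigma]\in K_1$ depends only on the conjugacy class of $\sigma$, since $[A,\sigma\tau\sigma^{-1}]=[A,\sigma]+[A,\tau]+[A,\sigma^{-1}]$ while $[A,\sigma]+[A,\sigma^{-1}]=[A,\mathrm{id}_A]=0$. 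So $[\PP^1,1/x]=[\PP^1,-x]$ and it suffices to take $\lambda=-1$, which has order $m=2$ in $k^\times$. Since $x\mapsto-x$ fixes $0$ and $\infty$ it does not act freely on $\PP^1(k)$, so for $n=1$ (not covered by Proposition~\ref{prop:su-special}) one computes directly: $\psi_1$ sends $[\PP^1,-x]$ to the sign of $x\mapsto-x$ on $\PP^1(\FF_q)=\FF_q\cup\{\infty\}$, a product of the $\tfrac{q-1}{2}$ transpositions $\{a,-a\}$ together with the fixed points $0,\infty$, giving $\psi_1([\PP^1,-x])=((-1)^{(q-1)/2},0)$.

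For $n\ge2$, $x\mapsto-x$ acts freely on the set $X_n$ of degree-$n$ points of $\PP^1$ (none of which is $0$ or $\infty$) with order $2$, so I apply Proposition~\ref{prop:su-special} with $\lambda=-1$, $m=2$. The only divisor $d>1$ of $(n,m)=(n,2)$ is $d=2$, occurring exactly when $n$ is even, so the only orbit types that appear are $(2,0)$ (always) and $(1,n/2)$ ($n$ even); explicitly, a degree-$n$ point $x$ has type $(1,n/2)$ iff $x^{q^{n/2}-1}=-1$ and type $(2,0)$ otherwise. Thus the statement reduces to counting these orbits, using that $\bar\FF_q^\times$ is cyclic: the degree-$n$ points form $\tfrac1n\sum_{d\mid n}\mu(n/d)q^d$ Frobenius orbits, and $x^{q^{n/2}-1}=-1$ has exactly $q^{n/2}-1$ solutions in $\bar\FF_q$, all lying in $\FF_{q^n}$ (because $2(q^{n/2}-1)\mid q^n-1$); the genuinely degree-$n$ solutions are obtained by removing those supported on proper subfields $\FF_{q^m}$, on which $\Frob^{n/2}$ restricts to $\Frob^{(n/2)\bmod m}$.

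Carrying out the count at $n=2$: all $q-1$ solutions of $x^{q-1}=-1$ have degree $2$, yielding $\tfrac{q-1}{2}$ orbits of type $(1,1)$ (one Frobenius orbit, $2$ points each) and $(q-1)^2/4$ orbits of type $(2,0)$ (two Frobenius orbits, $4$ points each); substituting $P_1=(q-1)^2/4$, $P_2=(q-1)/2$ into Proposition~\ref{prop:su-special} gives external coordinate $(-1)^{3P_1+2P_2}=(-1)^{(q-1)^2/4}=(-1)^{(q-1)/2}$ (using $k^2\equiv k\bmod 2$) and internal coordinate $2P_1+P_2\equiv\tfrac{q-1}{2}\pmod 2$, which is \eqref{eq:form} for $n=2$. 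For $n\ge3$ I expect $\psi_n([\PP^1,-x])=(1,0)$: the commuting fixed-point-free involutions $x\mapsto-x$ and $x\mapsto1/x$ preserve orbit type and, with Frobenius, act on the relevant degree-$n$ point sets freely enough that the type-$(2,0)$ orbit count is even (killing the sign coordinate) and the type-$(1,n/2)$ orbit count is even (so that $\#(\PP^1)_{(1,n/2)}\cdot\tfrac n2\equiv0\bmod n$). Finally, when $q\equiv3\pmod 4$ the integer $\tfrac{q-1}{2}$ is odd, so $\psi_2([\PP^1,-x])$ has nonzero internal coordinate; since $\zeta$ is symmetric monoidal (Lemma~\ref{lem:zetamon}), $K_*(\zeta)$ is a ring homomorphism compatible with the unit map and hence carries permutative elements to permutative elements, whereas every permutative element of $K_1(\AFSet_{\hat\Z})$ has all internal coordinates $0$ (Section~\ref{sec:zetamor}); therefore $[\PP^1,-x]=[\PP^1,1/x]$ is non-permutative in $K_1(\Var_k)$.

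The hardest part is the bookkeeping of the previous paragraph: correctly sieving the degree-$n$ solutions of $x^{q^{n/2}-1}=-1$ over the proper divisors of $n$, and, in the ``otherwise'' case, pinning down the exact $2$-adic valuations of the orbit counts --- it is here that one genuinely needs the freeness of the auxiliary symmetry group (and the divisibility of $q^2-1$ by $8$) rather than a naive estimate.
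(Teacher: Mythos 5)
Your reduction and your computations for $n=1$ and $n=2$ are correct, and they are enough for the ``in particular'' statement: the conjugacy argument identifying $[\PP^1,1/x]$ with $[\PP^1,-x]$ (which the paper asserts without proof) is valid, the $n=1$ sign count matches the paper's, and your direct count at $n=2$ --- the $q-1$ solutions of $x^{q-1}=-1$ all of exact degree $2$, giving $P_2=\tfrac{q-1}{2}$ and $P_1=\tfrac{(q-1)^2}{4}$ --- correctly recovers $\bigl((-1)^{(q-1)/2},\tfrac{q-1}{2}\bigr)$ from Proposition~\ref{prop:su-special}; this is arguably cleaner than the paper, which obtains $n=2$ as a special case of a general M\"obius-inversion computation.

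The gap is the ``otherwise'' case $n\ge 3$, which is the bulk of the paper's proof and which you only assert (``I expect''), with a heuristic about the involutions $x\mapsto -x$ and $x\mapsto 1/x$ acting ``freely enough'' on orbit sets. For odd $n\ge 3$ that idea can in fact be completed: a type-$(2,0)$ orbit fixed by $x\mapsto 1/x$ forces $1/x=\pm x^{q^j}$, hence $x^{q^{2j}}=x$ and $n\mid 2j$, which for odd $n$ gives $j=0$ and $x^2=\pm1$, impossible in degree $n\ge 3$; so the involution is free and $P_1$ is even. But for even $n\ge 4$ the involution $x\mapsto 1/x$ is \emph{not} free on type-$(2,0)$ orbits: $j=n/2$ is allowed, and any degree-$n$ point with $x^{q^{n/2}+1}=\pm 1$ (e.g.\ norm-one elements for $\FF_{q^n}/\FF_{q^{n/2}}$ of exact degree $n$; for $q=3$, $n=4$ the solutions of $x^{10}=\pm1$ give such fixed orbits) is type $(2,0)$ and gives a fixed orbit, while $x\mapsto -x$ acts trivially on orbits and is useless. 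So the parity of $P_1$ (and of $P_2$) in the even case requires an actual count of these exceptional points, which is exactly what the paper does by M\"obius inversion, controlling $2nP_1$ and $nP_2$ modulo $2^{m+2}$ and $2^{m+1}$ using $q^{2^{m}}\equiv 1 \pmod {2^{m+2}}$. As written, your argument establishes non-permutativity for $q\equiv 3\pmod 4$ but does not prove the displayed formula \eqref{eq:form} for $n\ge 3$.
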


\begin{proof}
  When $n = 1$ Galois orbits are trivial, and thus the internal coordinate is
  $0$.  Thus the question becomes to compute the sign of the permutation that
  $1/x$ induces on $\FF_q\bs\{0\}$.  There are two fixed points ($\pm 1$) and
  the rest are paired up into transpositions, so the sign is
  $(-1)^{\frac{q-1}{2}}$, as desired.
  
  For $n>1$ we use the formula in Proposition~\ref{prop:su-special}.  In this
  the formula simplifies to:
  \[\psi_n([\PP^1,-x]) = \begin{cases}
      \left( (-1)^{P_1},  P_{2} \frac n2 \right) \caseif n \hbox{ is even} \\
      ((-1)^{P_1},0) \caseotherwise.
    \end{cases}\]
  In particular, the result only depends on the parities of $P_1 = \# X_{(2,0)}$
  and $P_2 = \# X_{(1,\frac n2)}$.

  Before we begin the more complicated cases, some notation for the rest of the
  proof.  For integers $a$ and $b$, $M_a(b)$ is the number of aperiodic necklaces
  of length $b$ with beads of $a$ colors.  The function $\mu(m)$ is the Mobius
  function, which is $0$ if $m$ is not squarefree, and otherwise is $-1$ to the
  power of the number of distinct prime factors of $m$.  The symbol
  $\delta_{ij}$ is the Kronecker delta function.  The important facts about
  these to know are that
  \[M_a(b) = \frac 1b \sum_{d|b} \mu\left(\frac bd\right) a^d \qqand \delta_{1a}
    = \sum_{d|a} \mu(d).\] Suppose $n$ is odd, so $X_{(1,\frac n2)}$ is empty.
  We have (by Mobius inversion)
  \begin{align*}
    P_1 &=  \frac{1}{2n} \left( q^n - \#\{\hbox{points in smaller
                   extensions}\}\right)
    = \frac{1}{2n} \sum_{d|n} \mu\left(\frac nd\right) q^d.
  \end{align*}
  Since we only care about the parity of this number it suffices to consider the
  sum $\sum_{d|n} \mu(n/d) q^d$ modulo $4$.  Since $n$ is odd, $d$ must also
  always be odd; in particular, $q^d \equiv q\pmod 4$ for all $d$.  Thus, since
  $n > 1$,
  \[(2n)P_1 \equiv q \sum_{d|n} \mu\left(\frac nd \right) = 0 \pmod
    4.\]
  This completes the odd case.
  
  When $n$ is even, write $n = 2^m n'$; there are two types of orbits $(2,0)$
  and $(1,\frac n2)$.  Consider first orbits of type $(1,\frac n2)$.  A point in
  an orbit of type $(1,\frac n2)$ satisfies $-x = \Frob^{n/2} x$.  There are
  exactly $q^{n/2}-1$ of these; if any solution lies in a subextension of even
  index then it must lie in $\FF_q$, which contains exactly $2$ solutions.  Call
  a point $x$ \emph{good} if $x^{q^{n/2}-1} = 1$.  Let $L_1,\ldots,L_b$ be the
  maximal proper subfields of $\FF_q$ of odd index.  Then, using the principle
  of inclusion/exclusion (or Mobius inversion),
  \begin{align*}
    n P_2 &= (q^{n/2}-1) - \sum_{i=1}^b
                           \#\{\hbox{good points in }L_i\} + \sum_{i,j} \#\{\hbox{good points in }L_i
                           \cap L_j\} - \cdots\\
                         &= \sum_{\substack{d | n,\ \frac nd\ \mathrm{odd}}} \mu\left(\frac n d\right)
                           (q^{d/2}-1) = \sum_{d|n'} \mu\left(\frac{n'}{d}\right)(q^{2^{m-1}})^d - \delta_{1n'}. 
  \end{align*}
  Only the parity of $P_2$ matters, so it suffices to consider the right-hand
  side modulo $2^{m+1}$.  If $m > 1$, since $q$ is odd then
  $q^{2^{m-1}} \equiv 1 \pmod{2^{m+1}}$, so
  \[nP_2 = \sum_{d|n'} \mu\left(\frac{n'}{d}\right) - \delta_{1n'} \equiv 0
    \pmod{2^{m+1}}.\]
  On the other hand, if $m = 1$ we have $q^d \equiv q \pmod
  4$ and thus
  \[nP_2 \equiv (q - 1)\delta_{1n'} \pmod 4,\]
  thus giving the desired formula.
  
  Now consider $P_1$.  Using the fact that an orbit of type $(2,0)$ has $2n$
  points and an orbit of type $(1,\frac n2)$ has $n$ points, in terms of point
  counts over $\FF_{q^n}$
  \begin{align*}
    P_1 &= \frac{1}{2n} \left( q^n - \#\{\hbox{points in smaller
                   extensions}\} - \#\{\hbox{points in orbits of type }(1,\frac n2)\} \right)\\
                 &= \frac{1}{2n} \left(q^n - \#\{\hbox{points in smaller
                   extensions}\} - nP_2\right).
  \end{align*}
  By Mobius inversion,
  \[q^n - \#\{\hbox{points in smaller extensions}\} = \sum_{d|n} \mu\left(\frac
      nd\right) q^d.\] Thus the parity of $P_1$ can be determined by considering
  $2n\# P_1$ modulo $2^{m+2}$.  
  \[2n P_1 \equiv \bigg(\sum_{d|n} \mu\left(\frac nd \right) q^d\bigg) -
    \delta_{1n'}(q^{2^{m-1}}-1) \pmod{2^{m+2}}.\] Consider the first sum.  If
  $\ord_2 d < m-1$ then $\mu(n/d) = 0$.  Thus the only summands which are
  nonzero must have $\ord_2(d) = m$ or $m-1$.  Thus
  \[\sum_{d|n} \mu\left(\frac nd \right) q^d = \sum_{d|n'}
    \mu\left(\frac{n'}{d}\right) q^{2^md} - \sum_{d|n'}
    \mu\left(\frac{n'}{d}\right) q^{2^{m-1}d} = \sum_{d|n'}
    \mu\left(\frac{n'}{d}\right) (q^{2^md} - q^{2^{m-1}d}).\]
  Modulo $2^{m+2}$, $q^{2^m d} \equiv 1$.  As $d$ is odd, $q^{2^{m-1}d} \equiv
  q^{2^{m-1}}$  modulo $2^{m+2}$.  We can therefore conclude that
  \[2n\#X_{(2,0)} \equiv - 2\delta_{1n'}(q^{2^{m-1}}-1) \pmod{2^{m+2}}.\] In
  particular, if $n' > 1$ this is $0$.  If $m > 1$ then
  $q^{2^{m-1}}\equiv 1 \pmod{2^{m+1}}$, this must be $0$.  Lastly, if $n = 2$
  this is $-2 (q-1) \pmod 8$; in other words, $\# X_{(2,0)}$ is even if $q$ is
  $q \equiv 1 \pmod 4$ and odd if $q \equiv 3 \pmod 4$.
\end{proof}

\begin{remark}
  The question of whether $[\PP^1,1/x]$ is non-permutative (or even
  non-$0$-dimensional!) when $q \equiv 1 \pmod 4$ remains open.  
\end{remark}

The previous result is implies that multiplying by $-1$ is non-permutative if
$-1$ is not a square.   We can use the intuition behind this to show that
non-permutative elements always exist:
\begin{corollary} \label{cor:gen-root}
  Let $\FF_q$ be a finite field such that $\ord_2(q-1) = \ell$.  Let $\alpha$
  be a primitive $2^\ell$-th root of unity.  Then the element
  $[\PP^1, \alpha x]$ is non-permutative.
\end{corollary}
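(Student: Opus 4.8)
The plan is to apply Proposition~\ref{prop:su-special} with $n=2$ and exhibit a nonzero internal coordinate; this suffices, since (as noted just after Lemma~\ref{lem:mult-of-eta}) $K_*(\zeta)$ is a unital ring homomorphism and therefore sends permutative classes to classes whose internal coordinates all vanish.

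First I would record the setup. Because $\ord_2(q-1)=\ell$ we have $2^\ell\mid q-1$, so $\alpha$ lies in $\FF_q^\times$ and the automorphism $\varphi\colon x\mapsto\alpha x$ of $\PP^1$ has order $m=2^\ell$. It fixes $0$ and $\infty$ (both of which lie in $X_1$), and since $\alpha\in\FF_q^\times$ it carries $X_2=\FF_{q^2}\smallsetminus\FF_q\subseteq\FF_{q^2}^\times$ into itself, acting there freely with order exactly $2^\ell$ (as $X_2\neq\emptyset$ and $\alpha^j x=x$ with $x\neq 0$ forces $2^\ell\mid j$); Frobenius acts freely on $X_2$ with order $2$. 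Thus the hypotheses of Proposition~\ref{prop:su-special} hold. Since $(n,m)=\gcd(2,2^\ell)=2$, that proposition shows the internal coordinate of $\psi_2([\PP^1,\alpha x])$ equals
\[\sum_{d\mid 2} P_d\,\phi(d)\,\tfrac 2d \;=\; 2P_1+P_2\;\in\;\Z/2,\]
where $P_1=\#(\PP^1)_{(2^\ell,0)}$ and $P_2=\#(\PP^1)_{(2^{\ell-1},1)}$; so it reduces to proving that $P_2$ is odd.

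The heart of the argument is a concrete description of the type-$(2^{\ell-1},1)$ orbits in $X_2$. For $x\in X_2$, the orbit $[x]$ has type $(2^{\ell-1},1)$ exactly when $2^{\ell-1}$ is the least $d>0$ with $\alpha^d x=x^{q^a}$ for some $a\in\{0,1\}$; the case $a=0$ is impossible (it would force $\alpha^d=1$, i.e.\ $2^\ell\mid d$), so the condition becomes $\alpha^{2^{\ell-1}}x=x^q$, i.e.\ $x^{q-1}=\alpha^{2^{\ell-1}}=-1$. Conversely $x^{q-1}$ has order dividing $q+1$, and $\gcd(q+1,2^\ell)=2$ (if $\ell\geq 2$ then $q\equiv 1\pmod 4$, so $\ord_2(q+1)=1$; if $\ell=1$ then $\gcd(q+1,2)=2$), so the only elements of $\langle\alpha\rangle$ that $x^{q-1}$ can equal are $\pm 1$, and $x^{q-1}=1$ is ruled out because $x\notin\FF_q$. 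Hence the type-$(2^{\ell-1},1)$ orbits are precisely the $\langle\alpha\rangle$-orbits of $S\defeq\{x\in\FF_{q^2}^\times : x^{q-1}=-1\}$. Now $|S|=q-1$ (it is a fiber of $x\mapsto x^{q-1}$, whose image is the unique subgroup of order $q+1$, which contains $-1$); and for $x\in S$ we have $x^q=-x=\alpha^{2^{\ell-1}}x$, so $S$ is a union of $\langle\alpha\rangle$-orbits, each of size $2^\ell$ (every $\alpha^i x$ again lies in $S$ since $2^\ell\mid i(q-1)$). Therefore
\[P_2=\frac{q-1}{2^\ell},\]
which is odd precisely because $\ord_2(q-1)=\ell$.

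Consequently $\psi_2([\PP^1,\alpha x])$ has internal coordinate $1\in\Z/2$, which is nonzero, so $[\PP^1,\alpha x]$ is non-permutative. I expect the main obstacle to be the third paragraph: pinning down which orbits have type $(2^{\ell-1},1)$, checking that $S$ decomposes into such orbits with no leakage into other types, and evaluating the $2$-adic valuation carefully so that $(q-1)/2^\ell$ comes out odd. Everything else is a direct application of Proposition~\ref{prop:su-special} together with the already-established fact that $\psi\circ\zeta$ annihilates the internal coordinates of permutative classes.
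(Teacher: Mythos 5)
Your proof is correct, but it takes a genuinely different and noticeably shorter route than the paper's. The paper disposes of $\ell=1$ by quoting Corollary~\ref{cor:calc} and, for $\ell>1$, projects onto the $n=2^\ell$ coordinate, where $\psi_{2^\ell}[\PP^1,\alpha x]=\bigl((-1)^{P_1+P_2},\,2^{\ell-1}\sum_{j=1}^{\ell}P_{2^j}\bigr)$; nontriviality of the internal coordinate is then established by an induction showing $P_{2^\ell}=(q-1)/2^\ell$ is odd while $P_{2^{\ell-r}}=(q^{2^{r-1}}-1)^2/2^{\ell+r}$ is even for $1\le r\le\ell$, each step requiring counts of solutions of $x^{q^{2^r}}=\alpha^{2^r}x$ over the various subfields of $\FF_{q^{2^\ell}}$. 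You instead project onto the $n=2$ coordinate, where by Propositions~\ref{prop:stable-unstable} and~\ref{prop:su-special} the internal coordinate reduces to $P_2=\#(\PP^1)_{(2^{\ell-1},1)} \bmod 2$, and you compute this in one stroke: the points lying in type-$(2^{\ell-1},1)$ orbits are exactly the solutions of $x^{q-1}=-1$ (your $\gcd(q+1,2^\ell)=2$ argument correctly rules out any other $(d,1)$ type, consistent with the constraint $m/d=n/(n,a)$ stated in the paper), a set of size $q-1$ closed under $\varphi$ and Frobenius on which Frobenius acts as $\varphi^{2^{\ell-1}}$, so it splits into free $\langle\alpha\rangle$-orbits of size $2^\ell$ and $P_2=(q-1)/2^\ell$ is odd precisely because $\ord_2(q-1)=\ell$. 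The hypotheses you need do hold (the action on $X_2$ is free of order $2^\ell$ since $0,\infty\in X_1$), and the reduction to exhibiting a nonzero internal coordinate is exactly the one the paper has already justified; moreover your argument treats $\ell=1$ uniformly rather than as a separate case. What the paper's longer computation buys is finer information: the full value of $\psi_{2^\ell}$ in $\Z/2\oplus\Z/2^\ell$ together with the individual counts $P_{2^j}$; what yours buys is brevity and a transparent reason for the parity, with $\ord_2(q-1)=\ell$ appearing directly as the oddness of $(q-1)/2^\ell$.
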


\begin{proof}
  When $\ell=1$ this is simply Corollary~\ref{cor:calc}, so we focus on the case
  $\ell > 1$.  By Proposition~\ref{prop:su-special}, keeping in mind that $m =
  2^\ell$, 
  \[\psi_{2^\ell}[\PP^1, \alpha x] = \bigg((-1)^{P_1+P_2}, 2^{\ell-1}
    \sum_{j=1}^\ell P_{2^j}\bigg).\]
  To show that $[\PP^1,\alpha x]$ is non-permutative it therefore suffices to
  check that $\sum_{j=1}^\ell P_{2^j}$ is odd.  In fact, we claim that
  $P_{2^\ell}$ is odd and $P_{2^j}$ is even for $1 \leq j < \ell$.

  First, consider $P_{2^\ell}$, which counts orbits of type $(1,1)$.  Solutions
  to $x^q = \alpha x$ correspond to solutions to
  \[a(q-1) \equiv \frac{q^{2^\ell}-1}{2^\ell}  \pmod{q^{2^\ell}-1}.\]
  Since $(q-1)/2^\ell \mathrel{|} q-1, \frac{q^{2^\ell}-1}{2^\ell},
  q^{2^\ell}-1$, solutions to this equation exist exactly when solutions to
  \[2^\ell a \equiv q^{2^\ell-1} + \cdots + q + 1 \pmod{2^\ell(q^{2^{\ell}-1} +
      \cdots + 1)}\]
  exist.  As there are $2^\ell$ terms in the right-hand side of the equivalence,
  solutions exist---and thus there are exactly $q-1$ different solutions to the
  original equation.  Note, in addition, that none of these are in extensions of
  lower degree.  If we instead consider solutions to $x^q = \alpha x$ in
  $\FF_{2^d}$ with $d < \ell$ then they correspond to solutions to
  \[2^\ell a \equiv q^{2^d-1} + \cdots + q + 1 \pmod{2^\ell(q^{2^d-1} + \cdots +
      q + 1}.\] The left-hand side of the equivalence is $0$ mod $2^\ell$, while
  the right-hand side is $2^d \not\equiv 0 \pmod{2^\ell}$; thus there are no
  solutions in lower extensions.  Thus
  \[P_{2^ell} = \frac{q-1}{2^\ell} \equiv 1 \pmod 2.\]

  Now consider $P_{2^{\ell-1}}$, which counts orbits of type $(2,2)$.  Solutions
  to $x^{q^2} = \alpha^2 x$ correspond to solutions to
  \[a(q^2 - 1) \equiv \frac{q^{2^\ell}-1}{2^{\ell-1}} \pmod{q^{2^\ell}-1}.\] As
  above, there are exactly $q^2-1$ solutions to this equation.  Now consider
  $x^{q^2} = \alpha^2 x$ over $\FF_{q^{2^d}}$ for $d < \ell$.  If $d = 0,1$
  there are no solutions, since $x^{q^2} = x$.  If $d \geq 2$ then
  solutions to this equation correspond to solutions to
  \[a(q^2-1) \equiv \frac{q^{2^d}-1}{2^{\ell-1}} \pmod{q^{2^d}-1}.\]
  Dividing all three terms by $\frac{q^2-1}{2^{\ell-1}}$ gives
  \[2^{\ell-1}a \equiv q^{2^d-2} + q^{2^d-4} + \cdots + q^2 + 1
    \pmod{2^{\ell-1}(q^{2^d-2} + q^{2^d-4} + \cdots + q^2 + 1)}.\]
  The left-hand side of the equivalence is $0$ mod $2^{\ell-1}$, but the
  right-hand side is equivalent to $2^{d-1} \not\equiv 0 \pmod{2^{\ell-1}}$,
  since $d < \ell$.  Thus there are no solutions, and thus no points over
  lower-degree extenstions.  However, every orbit of type $(1,1)$ also gives
  solutions to this equation, as do orbits of type $(1, 2^{\ell-1}+1)$ (which
  are in bijection with orbits of type $(1,1)$).  Thus
  \[P_{2^{\ell-1}} = \frac{1}{2^{\ell+1}}\left((q^2-1) - 2^{\ell+1}P_1\right) =
    \frac{1}{2^{\ell+1}}\left((q^2-1) - 2(q-1)\right) =
    \frac{(q-1)^2}{2^{\ell+1}}.\]
  Thus $\ord_2 P_{2^{\ell-1}} = 2\ell - (\ell+1) = \ell -1$; since $\ell > 1$
  this is positive, and thus $P_{2^{\ell-1}}$ is even.

  We now claim that for $1 \leq r \leq \ell$,
  \[P_{2^{\ell-r}} = \frac{(q^{2^{r-1}}-1)^2}{2^{\ell+r}},\]
  so that
  \[\ord_2 P_{2^{\ell-r}} = \ell + r - 2 \geq 1;\]
  in particular $P_{2^{\ell-r}}$ is always even, from which the result follows.
  We prove this by induction on $r$.  The base cases $r = 0,1$ were done above,
  so we proceed to the inductive step.  This is analogous to the base case
  $r=1$, although with slightly more bookkeeping. To compute $P_{2^{\ell-r}}$ we
  first count solutions to $x^{q^r} = \alpha^{2^r} x$.  In $\FF_{q^{2^\ell}}$
  solutions to this correspond to solutions to
  \[a(q^{2^r}-1) \equiv \frac{q^{2^\ell}-1}{2^{\ell-r}} \pmod{q^{2^\ell}-1}.\]
  Since all three terms are divisible by $q^{2^r}-1$ solutions to the equation
  exist, and thus there are $q^{2^r}-1$ solutions.  For $d < \ell$, solutions to
  the equation in $\FF_{q^{2^d}}$ correspond to solutions to
  \[a(q^{2^r}-1) \equiv \frac{q^{2^d}-1}{2^{\ell-r}} \pmod{q^{2^d}-1}.\]
  Dividing both sides by $\frac{q^{2^r}-1}{2^{\ell-r}}$ shows that solutions to
  this equation exist exactly when there exist solutions to
  \[2^{\ell-r}a \equiv q^{2^d-2^r} + \cdots + q^{2^d} + 1
    \pmod{2^{\ell-r}(q^{2^d-2^r} + \cdots + q^{2^d} + 1)}.\]
  Modulo $2^{\ell-r}$ the left-hand side is $0$ but the right-hand side is
  $2^{d-r}$, which is not $0$; thus there are no solutions over lower-degree
  extensions.  However, some of these extensions come from orbits of type
  $P_{2^{\ell-r'}}$ for $r' < r$ and we have
  \begin{align*}
    P_{2^{\ell-r}} &= \frac{1}{2^{\ell+r}}\bigg((q^{2^r}-1) - \sum_{j=0}^{r-1}
                     2^{r-j} \cdot 2^{\ell+j} P_{2^{\ell-j}}
                     \bigg) 
                   = \frac{1}{2^{\ell+r}}\left(q^{2^{r-1}}-1\right)^2,
  \end{align*}
  by applying the induction hypothesis and the two base cases inside the
  summation. 
\end{proof}

As an alternate approach, one can consider elliptic curves with complex
multiplication by $i$.

\begin{example}
  Let $k = \FF_q$ with $q\equiv 1 \pmod 4$.  Then the elliptic curve $E$ given
  by $y^2 = x^3 + x$ has an automorphism $\varphi: (x,y) \rgoesto (-x,iy)$.
  This has order 4 on all finite points except for those where $y = 0$, which
  are all over $\FF_q$.  Consider $\psi_2[E, \varphi]$.  There are two types of
  orbits: $(4,0)$ and $(2,1)$.  A point $(x,y)$ in an orbit of type $(2,1)$ has
  $(\bar x, \bar y) = (x, -y)$; in other words, if we write
  $\FF_{q^2} = \FF_q[\sqrt{\alpha}] $ for some $\alpha\in \FF_q$ then
  $x \in \FF_q$ and $y = y'\sqrt\alpha$.  Thus the point $(x,y')$ is an
  $\FF_q$-point of the curve $E'$ given by $\alpha y^2 = x^3 + x$, the quadratic
  twist of $E$.  Conversely, any finite point on $E'$ where $y \neq 0$
  corresponds to a point in an orbit of type $(2,1)$; thus the number of orbits
  of type $(2,1)$ is a quarter of the number of finite points with nonzero
  $y$-coordinate:
  \[\# X_{(2,1)} = \frac{1}{4}E'(\FF_q) - 1.\]

  Therefore
  \begin{align*}
    \# X_{(4,0)} &= \frac{1}{8}\left(E(\FF_{q^2}) - E(\FF_q) - 4 \#
                   X_{(2,1)}\right) 
                     = \frac18E(\FF_{q^2}) - \frac{q-1}{4}.
  \end{align*}

  Using Proposition~\ref{prop:stable-unstable} we conclude that
  \[\psi_2[E,\varphi] = \left((-1)^{\# E(\FF_{q^2})/8 - (q-1)/4}, \frac14 \#E'(\FF_q)-1\right) \in
    \Z/2\times \Z/2.\]
  In particular, $[E,\varphi]$ is non-permutative if $\# E'(\FF_q)$ is a
  multiple of $8$.

  For example, consider $q = 5$.  In this case $\# E(\FF_{q^2}) = 32$ and
  $\#E'(\FF_q) = 8$.  Thus this element is non-permutative.
\end{example}

As base change is also an $E_\infty$-map, these examples provide the tools to
determine that certain classes over infinite bases are also non-permutative. 

\begin{corollary}
  Let $k$ be a global or local field with a place of cardinality
  $q \equiv 3 \pmod{4}$.  Then $K_1(\Var_{\mathcal{O}_k})$ has non-permutative
  elements.  Alternately, if $k$ contains a place of cardinality
  $q \equiv 2^{\ell-1}+1 \pmod {2^\ell}$ and a primitive $2^{\ell-1}$-st root of
  unity then $K_1(\Var_{\mathcal{O}_k})$ has non-permutative elements.
\end{corollary}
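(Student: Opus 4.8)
The plan is to reduce the statement to the already-established finite-field results via the base-change machinery. The key point is that for a place $v$ of $k$ with residue field $\FF_q$, the ring of integers $\mathcal{O}_k$ admits a ring map $\mathcal{O}_k \rto \FF_q$ (reduction mod the maximal ideal $\mathfrak{p}_v$), which is a morphism of schemes $\Spec \FF_q \rto \Spec \mathcal{O}_k$. By the lemma on base change, this induces a symmetric monoidal morphism of assemblers $\Var_{\mathcal{O}_k} \rto \Var_{\FF_q}$, and hence a ring homomorphism $K_*(\Var_{\mathcal{O}_k}) \rto K_*(\Var_{\FF_q})$. Since this map is compatible with the unit maps from $\S$, it takes permutative elements to permutative elements; equivalently, if a class in $K_1(\Var_{\mathcal{O}_k})$ maps to a non-permutative element of $K_1(\Var_{\FF_q})$, it is itself non-permutative.

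So the first step is to choose the right class over $\mathcal{O}_k$ and check that it base-changes to the known non-permutative class over the residue field. In the first case ($q \equiv 3 \pmod 4$), I would take $[\PP^1_{\mathcal{O}_k}, \, x \mapsto -x]$; its image under reduction mod $\mathfrak{p}_v$ is exactly $[\PP^1_{\FF_q}, -x] = [\PP^1_{\FF_q}, 1/x]$, which is non-permutative by Corollary~\ref{cor:calc} since $q \equiv 3 \pmod 4$. In the second case, writing $\ell$ for the given exponent and choosing a primitive $2^{\ell-1}$-st root of unity $\alpha \in \mathcal{O}_k$ (which exists by hypothesis, and whose reduction is a primitive $2^{\ell-1}$-st root of unity in $\FF_q$ since $q \equiv 2^{\ell-1}+1 \pmod{2^\ell}$ forces $\ord_2(q-1) = \ell-1$, so reduction of roots of unity of this order is injective), I would take $[\PP^1_{\mathcal{O}_k}, \, x \mapsto \alpha x]$, whose reduction is $[\PP^1_{\FF_q}, \alpha x]$, non-permutative by Corollary~\ref{cor:gen-root}. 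In each case one must verify that the automorphism of $\PP^1$ is defined over $\mathcal{O}_k$ (clear: it is given by an integral linear fractional transformation with unit determinant) and that it indeed reduces to the stated automorphism over the residue field (immediate from functoriality of $\PP^1$ and of the linear action).

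The only genuine subtlety — and the step I expect to require the most care — is the bookkeeping around the exponent $\ell$ in the second case: one must match the congruence condition $q \equiv 2^{\ell-1}+1 \pmod{2^\ell}$ with the hypothesis $\ord_2(q-1) = \ell-1$ used in Corollary~\ref{cor:gen-root}, and confirm that the global/local field $k$ actually contains the required root of unity so that the automorphism is genuinely defined over $\mathcal{O}_k$ rather than only after an extension. Once this is pinned down, the argument is a formal consequence of the $E_\infty$-functoriality of base change plus the two corollaries, so the proof is short: it is essentially "apply $K_*$ to the reduction morphism and quote Corollaries~\ref{cor:calc} and~\ref{cor:gen-root}."
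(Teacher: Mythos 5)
Your argument is exactly the paper's: base change along reduction at the chosen place gives an $E_\infty$ (ring) map $K(\Var_{\mathcal{O}_k}) \rto K(\Var_{\FF_q})$, so preimages of non-permutative elements are non-permutative, and the classes $[\PP^1,-x]$ and $[\PP^1,\alpha x]$ reduce to the non-permutative elements of Corollaries~\ref{cor:calc} and~\ref{cor:gen-root}. Your extra bookkeeping (integrality of $\alpha$, primitivity of its reduction, and the shift $\ord_2(q-1)=\ell-1$) is correct and only makes explicit what the paper leaves implicit.
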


\begin{proof}
  Consider the map $K(\Var_{\mathcal{O}_k}) \rto K(\Var_{\FF_q})$ induced by the
  reduction to $\FF_q$.  This is an $E_\infty$-map and thus the preimage of any
  non-permutative element is non-permutative.

  In the first case, consider the element $[\PP^1,-x]$; by
  Corollary~\ref{cor:calc} its image under base change is non-permutative, and
  therefore it is also non-permutative.

  In the second case, consider the automorphism of $\PP^1$ given by
  $x \rgoesto \alpha x$, for $\alpha$ a primitive $2^{\ell-1}$-st root of unity.
  The image of $[\PP^1,\alpha x]\in K_1(\Var_{\mathcal{O}_k})$ in
  $K_1(\Var_{\FF_q})$ is non-permutative by Corollary~\ref{cor:gen-root}, and is
  therefore non-permutative.
\end{proof}





\begin{remark}
  In \cite{CWZ-zeta}, the author and collaborators give an alternate
  construction of such a ``derived zeta function.''  By using the observation
  that $|X(L_n)|$ is exactly the number of fixed points of $\mathrm{Frob}_k^{n}$
  and using the Grothendieck--Lefschetz fixed point theorem, the collaborators
  construct a map of spectra $K(\Var_k) \rto K(\End(\Q_\ell))$ which on $K_0$ is
  exactly the zeta function (using a result of Almkvist \cite{grayson78} to show
  that the composition of that map with
  $K_0(\End(\Q_\ell)) \rto (1+t\Z\llbracket t\rrbracket,\times)$ is $Z(-,t)$).
  As $K_0(\End(\Q_\ell))$ is the rational Witt vectors, this produces a derived
  zeta function which recalls that zeta functions should be rational.  Our
  current construction cannot do that, as the data about all orbits of different
  sizes are independent.  However, the construction of \cite{CWZ-zeta} has the
  weakness that it was not a map of $E_\infty$-ring spectra, as it was
  constructed using two formal inverses to weak equivalences.  The construction
  was also significantly more complicated than the construction in this paper,
  leading to much more difficulty with analysis.  In future work, the author
  hopes to find a construction that unifies the strength of these two
  approaches, so that it can retain the rationality data as well as the ring
  structure.
\end{remark}

\section{Technical Preliminaries} \label{sec:technical}

In this section we review some of the technical preliminaries necessary for the
proofs.  Most of the results in this section can be found in \cite[Section
2]{Z-Kth-ass}; we revisit them here in the interest of readability.

\begin{definition}[{\cite[Definition 2.1]{zakharevich10}}]
  Let $\C$ be an assembler.  The category $\Tw(\C)$ is defined to have
  \begin{description}
  \item[objects] tuples $\SCob{A}{i}$, where $I$ is a finite set and each
    $A_i$ is a noninitial object in $\C$.
  \item[morphisms] A morphism $f:\SCob{A}{i} \rto \SCob{B}{j}$ is a map of
    finite sets $f:I \rto J$ (called the \emph{set map}), together with
    morphisms $f_i:A_i \rto B_{f(i)}$ in $\C$, for each $i\in I$ (called the
    \emph{component maps}).  The component maps must satisfy the condition
    that for $i\neq i'$, if $f(i) = f(i')$ then $f_i$ and $f_{i'}$ are disjoint.
  \item[composition] The composition of $f:\SCob{A}{i} \rto \SCob{B}{j}$ and
    $g:\SCob{B}{j} \rto \SCob{C}{k}$ is given by the set map $g\circ f$,
    together with the component maps $g_{f(i)} \circ f_i:A_i \rto C_{gf(i)}$.
  \end{description}

  The category $\W(\C)$ is the subcategory of $\Tw(\C)$ containing all morphisms
  $\SCob{A}{i} \rto \SCob Bj$ such that for all $j\in J$, the family $\{f_i;A_i
  \rto B_j\}_{j\in f^{-1}(i)}$ is a finite disjoint covering family.
\end{definition}

In this paper, we use two distinct constructions of the $K$-theory of an
assembler: the $\Gamma$-space definition from \cite{Z-Kth-ass} and the
Waldhausen category definition from \cite{Z-ass-pi1}.  In \cite[Theorem
2.1]{Z-ass-pi1} it is shown that these two constructions produce equivalent
$K$-theories for closed assemblers; in this paper, we will also show that this
equivalence respects the monoidal structure.  We give a short review of these
definitions here.  Write $\Gamma\Spc$ for the category of $\Gamma$-spaces,
$\WaldCat$ for the category of Waldhausen categories, and $\Sp$ for the category
of symmetric spectra.

We begin by recalling the two definitions.

\begin{definition}[{\cite[Definition 2.12]{Z-Kth-ass}}] \label{def:Kass}
  Let $X$ be a pointed set; write $X^\circ \defeq X \bs \{*\}$.  For an
  assembler $\C$, we write $X \sma \C$ for the assembler
  $\bigvee_{x\in X^\circ} \C$; here, the wedge product of assemblers is obtained
  by taking their unions and associating the initial objects.  This gives a
  functor $\FinSet_* \times \Assemb \rto \Assemb$.  In addition, there is a
  natural transformation $\cdot \sma N\W(\C) \rto N\W(\cdot \sma \C)$ given
  for each $X$ by the composition
  \[X \sma N\W(\C) \cong \bigvee_{X^\circ} N\W(\C) \rto
    N\left(\bigoplus_{X^\circ} \W(\C)\right) \cong N\W(X \sma \C).\]  
  
  For an assembler $\C$, we define
  \[K^\Gamma(\C) \defeq \mathbf{B}(X \rgoesto \W(X \sma \C)).\]
  Here, $\mathbf{B}$ is the classifying spectrum functor which takes a
  $\Gamma$-space to a spectrum.  When not comparing this construction to $K^W$
  (defined below), we write $K$ instead of $K^\Gamma$.
\end{definition}

\begin{definition}[{\cite[Definition 1.7]{Z-ass-pi1}}]
  Suppose that $\C$ is a closed assembler.  We define the category $\SC(\C)$
  to have
  \begin{description}
  \item[objects] $\ob \Tw(\C)$
  \item[morphisms] A morphism $f:\SCob Ai \rto \SCob Bj$ is represented by a span
    \[\SCob Ai \lto^p \SCob Ck \rto^\sigma \SCob Bj.\]
    Here $p$ is a morphism in $\Tw(\C)$, and $\sigma$ is represented by a set
    map $\sigma: K \rto J$, together with component maps $\sigma_k:C_k \rto
    B_{\sigma(k)}$ which are isomorphisms in $\C$.
  \item[composition] The composition of two morphisms $f: \SCob{A}{i} \rto
    \SCob{B}j$ and $g: \SCob Bj \rto \SCob Ck$ represented by a diagram
    \begin{squisheddiagram}
      { & \SCob{A'}{i'} & & \SCob{B'}{j'} \\
        \SCob Ai && \SCob Bj && \SCob Ck \\};
      \to{1-2}{2-1}_p \to{1-2}{2-3}^\sigma  \to{1-4}{2-3}_q \to{1-4}{2-5}^\tau
    \end{squisheddiagram}
    is defined by pulling back $q$ along $\sigma$ and composing down the two
    sides.  It is necessary to check that such a pullback produces a
    well-defined composition; see\cite[Lemma 6.4]{zakharevich10}.
  \end{description}
  We give $\SC(\C)$ the structure of a Waldhausen category by defining
  \begin{description}
  \item[cofibrations] to be those morphisms where $p$ is in $\W(\C)$ and
    $\sigma$ has an injective set map, and
  \item[weak equivalences] to be those cofibrations where $\sigma$ has a
    bijective set map.
  \end{description}
  For a closed assembler $\C$, we define
  \[K^W(\C) \defeq K(\SC(\C)).\]
\end{definition}

In assemblers it is often possible to compute the quotient of the $K$-theory of
an assembler by the $K$-theory of a subassembler by simply ``removing'' the
objects of the subassembler.

\begin{definition}[{\cite[Definition 2.9]{Z-Kth-ass}}]
  Let $\C$ be an assembler and $\D$ a sieve in $\C$.  The assembler $\C \bs \D$
  is defined to have as its underlying category the full subcategory of $\C$
  containing all objects not in $\D^\circ$.  A family
  $\{f_i:A_i \rto A\}_{i\in I}$ is a covering family in $\C\bs\D$ if there
  exists a family $\{f_j:A_j \rto A\}_{j\in J}$ with each $A_j\in \D$ such that
  $\{f_i:A_i \rto A\}_{i\in I\cup J}$ is a covering family in $\C$.
\end{definition}

Often it is the case that $K(\C)/K(\D) \simeq K(\C\bs\D)$; see \cite[Theorem
D]{Z-Kth-ass} for more detail.  Here, we consider a situation where this
does \emph{not} hold, as it will be important intuition for the construction of
the monoidal structure on $c\Assemb$.

Consider an object $A$ which has an empty covering family.  The morphism
$\{\}_\emptyset \rto \{A\}_{\{*\}}$ is a weak equivalence in $\SC(\C)$.  Thus,
morally speaking, $A$ should not contribute to the $K$-theory of $\C$.  However,
this can be deceiving, as the underlying categorical structure of $\C$ can
contribute to the $K$-theory of $\C$ despite this.  To help illustrate this, we
present an example, which will be helpful in understanding the difference
between $\C\boxtimes \D$ and $\C \sma \D$ in Section~\ref{sec:monoidal}:

\begin{example} \label{ex:square}
  Let $\C$ be the assembler with the following underlying category:
  \begin{diagram}
    { & & B \\
      \initial & A & & D. \\
      & & C \\};
    \to{2-1}{2-2} \to{2-2}{1-3} \to{1-3}{2-4}
    \to{2-2}{3-3} \to{3-3}{2-4}
  \end{diagram}
  The topology on $\C$ is generated by the covering families $\{B \rto D, C \rto
  D\}$ and the empty covering families of $A$ and $\initial$.  Note that $D$ has
  \emph{no} finite disjoint covering families.  Thus, despite the fact that
  $D$ is covered by $B$ and $C$,
  \[K(\C) \simeq \S\vee\S\vee\S,\]
  with one copy of $\S$ for each of $B$, $C$, and $D$.

  Now consider $\C\bs \{\initial \rto A\}$; this assembler is given by the
  diagram
  \begin{diagram}
    { & B \\
      \initial & & D. \\
      & C\\};
    \to{2-1}{1-2} \to{1-2}{2-3} \to{2-1}{3-2} \to{3-2}{2-3}
  \end{diagram}
  The topology on this assembler is generated by the covering family $\{B \rto
  D, C \rto D\}$ and the empty covering family on $\initial$.  Here, $D$
  \emph{does} have a finite disjoint covering family; thus
  \[K(\C \bs \{\initial \rto A\}) \simeq \S \vee \S,\]
  with one copy of $\S$ for each of $B$ and $C$.
\end{example}

\section{A monoidal structure on the category of assemblers}
\label{sec:monoidal}

The goal of this section is to construct a symmetric monoidal structure on
$c\Assemb$ in such a way that the $K$-theory functor is symmetric monoidal and
$\S^\Assemb$ is the unit.

We begin with a helper definition.

\begin{definition}
  Let $\C,\D$ be two closed assemblers. The assembler $\C\boxtimes \D$ has as
  its underlying category the category $\C\times \D$, and its topology is
  generated by the coverage in which the covering families are families
  $\{(A_i,B_j) \rto (A,B)\}_{(i,j)\in I\times J}$, where
  $\{A_i \rto A\}_{i\in I}$ is a covering family in $\C$ and
  $\{B_j \rto B\}_{j\in J}$ is a covering family in $\D$.
\end{definition}

\begin{lemma}
  $\C\boxtimes \D$ is a closed assembler. 
\end{lemma}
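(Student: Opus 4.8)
**The plan is to verify the three assembler axioms (I), (R), (M) directly for $\C\boxtimes\D$, using the fact that $\C$ and $\D$ are closed assemblers.** Since $\C\boxtimes\D$ has underlying category $\C\times\D$, which has all pullbacks (computed coordinatewise) because $\C$ and $\D$ do, the assembler will automatically be closed, and then axiom (R) holds for free. So the real content is to check (I), (M), and that the stated covering families genuinely form a coverage generating a Grothendieck topology — i.e., that $\C\times\D$ equipped with this topology is a Grothendieck site with an initial object.

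\textbf{First, the initial object (axiom (I)).} The category $\C\times\D$ has initial object $(\initial_\C,\initial_\D)$. I would check it has an empty covering family: take the empty covering family of $\initial_\C$ in $\C$ (which exists by axiom (I) for $\C$) indexed by $I=\emptyset$, and any covering family of $\initial_\D$ indexed by $J$; the product is indexed by $I\times J=\emptyset$, giving the empty covering family of $(\initial_\C,\initial_\D)$. So (I) holds.

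\textbf{Next, monomorphisms (axiom (M)).} A morphism $(f,g):(A,B)\to(A',B')$ in $\C\times\D$ is monic iff it is monic in each coordinate (limits in a product category are coordinatewise); since all morphisms in $\C$ and in $\D$ are monic by axiom (M) for each, the same holds in $\C\times\D$. So (M) holds.

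\textbf{Then, the topology.} I would check that the given families form a coverage, i.e. are stable under pullback in the appropriate sense, so that the generated sieves form a Grothendieck topology. Given a covering family $\{(A_i,B_j)\to(A,B)\}_{(i,j)\in I\times J}$ with $\{A_i\to A\}$ and $\{B_j\to B\}$ covering in $\C,\D$, and a morphism $(A'',B'')\to(A,B)$, the pullbacks $(A_i,B_j)\times_{(A,B)}(A'',B'') = (A_i\times_A A'', B_j\times_B B'')$ are computed coordinatewise; since $\{A_i\times_A A''\to A''\}$ refines to (or is itself, using that $\C$ is closed so disjoint covering families behave well) a covering family in $\C$ and similarly in $\D$, their product is again of the required form. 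One also checks the transitivity/local character axiom: a family which is locally covering (in the $\boxtimes$-sense) on a covering family is covering. This is where I'd be slightly careful, since the ``gridded'' families are not closed under arbitrary composition in the naive way — but because both factors are closed assemblers, one can take common refinements in each coordinate and reassemble the grid. This bookkeeping — reconciling composites and refinements of gridded families with the coverage axioms — is the step I expect to be the main obstacle, though it is essentially routine given that $\C$ and $\D$ are closed.

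\textbf{Finally, closedness and (R).} Since $\C\times\D$ has all pullbacks, $\C\boxtimes\D$ is a closed assembler, and by the remark in Definition~\ref{def:ass} axiom (R) is automatic. One last check: disjoint families in $\C\boxtimes\D$ — if $(f,g)$ and $(f',g')$ have a common target and are disjoint, then $(A_i,B_j)\times_{(A,B)}(A_{i'},B_{j'})=(A_i\times_A A_{i'},\,B_j\times_B B_{j'})$ equals the initial object precisely when at least one coordinate pullback is initial, which is compatible with the convention that the initial object of $\C\times\D$ is $(\initial_\C,\initial_\D)$ — so disjointness of gridded covering families reduces to disjointness in each coordinate, which holds. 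This completes the verification that $\C\boxtimes\D$ is a closed assembler.
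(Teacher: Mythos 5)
Your proof is correct and follows essentially the same route as the paper's (which is much terser): check that the coverage is stable under pullback using coordinatewise pullbacks and the fact that covering families in $\C$ and $\D$ are pullback-stable, note that (I) and (M) are immediate from the definition, and get (R) for free since $\C\boxtimes\D$ has all pullbacks. One small caution: your closing remark about disjointness is not needed for the lemma and gets the convention backwards --- in $\C\boxtimes\D$ the initial object is $(\initial,\initial)$, so a coordinatewise pullback is initial only when \emph{both} coordinates are initial, which is exactly why most gridded families in $\C\boxtimes\D$ fail to be disjoint and why the paper later passes to $\C\sma\D$.
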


\begin{proof}
  First we need to check that the Grothendieck topology is well-defined.  To
  check this we simply need to check that the pullback of a family in the
  coverage is still in the coverage; this follows because covering families in
  $\C$ and $\D$ are closed under pullbacks.

  Axioms (I) and (M) follow directly from the definition.  Axiom (R) holds
  because $\C\boxtimes \D$ has pullbacks.
\end{proof}

\begin{remark}
  This construction is analogous to the construction of the product topology.
  In the product topology, the generating open sets are the products of the opens
  in the two categories.  More concretely, suppose $U \subseteq X$ is covered by
  $\{A_1,\ldots,A_n\}$ and $V\subseteq Y$ is covered by $\{B_1,\ldots,B_m\}$.
  Then to cover $U\times V$ we need to take
  $\{A_i \times B_j \,|\, 1 \leq i \leq n, 1 \leq j \leq m\}$.

  Contrast this with the usual disjoint union topology on $\C\times \D$ (where
  $\C$ and $\D$ are sites) where $\{(A_i,B_i) \rto (A,B)\}_{i\in I}$ is a
  covering family if $\{A_i \rto A\}_{i\in i}$ and $\{B_i \rto B\}_{i\in I}$ are
  both covering families.  
\end{remark}

Let
$\alpha_{\C,\D,\E}: (\C\boxtimes \D) \boxtimes \E \rto \C\boxtimes (\D\boxtimes
\E)$ be the functor taking $((A,B),C)$ to $(A,(B,C))$.  Let
$\gamma_{\C,\D}: \C\boxtimes \D \rto \D\boxtimes \C$ be the functor taking
$(A,B)$ to $(B,A)$.  Let $\lambda_\C: \S^\Assemb\boxtimes \C \rto \C$ be the
projection onto the second coordinate and let
$\rho_\C: \C \boxtimes \S^\Assemb \rto \C$ be the projection onto the first
coordinate.

\begin{lemma} \label{lem:boxtimes_nats}
  The natural transformations $\alpha$, $\gamma$, $\lambda$ and $\rho$ satisfy
  all of the axioms of a symmetric monoidal structure except the condition that
  $\lambda$ and $\rho$ be natural isomorphisms. 
\end{lemma}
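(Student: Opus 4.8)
The plan is to reduce every coherence axiom to a strict identity of functors on the underlying categories. On underlying categories $\C\boxtimes\D$ is simply the Cartesian product $\C\times\D$, and $\alpha$, $\gamma$, $\lambda$, $\rho$ are, respectively, the reassociation functor $((A,B),C)\mapsto(A,(B,C))$, the swap $(A,B)\mapsto(B,A)$, and the two projections off $\S^{\Assemb}$. Since a morphism of assemblers is just a functor carrying extra structure, two such morphisms are equal as soon as their underlying functors are equal; hence the pentagon, both hexagons, the triangle, and the identity $\gamma_{\D,\C}\circ\gamma_{\C,\D}=\mathrm{id}$ reduce to the corresponding identities of functors. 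These hold because $\times$ is strictly associative, because reassociation and swap satisfy the pentagon, the hexagons, and $\gamma^2=\mathrm{id}$ on the nose, and because in the triangle both composites act as a projection preceded by a reassociation, a computation that does not see whether the unit $\S^{\Assemb}$ is terminal. (In fact $\lambda_{\S^{\Assemb}}\ne\rho_{\S^{\Assemb}}$, as it must be for a structure that is not genuinely monoidal.) The same observation handles naturality in $c\Assemb$: for morphisms $F,G,H$ of closed assemblers the naturality squares for $\alpha,\gamma,\lambda,\rho$ already commute as diagrams of functors.

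So the substance of the lemma is that $\alpha$, $\gamma$, $\lambda$, $\rho$, together with the evident inverses $\alpha^{-1}\colon(A,(B,C))\mapsto((A,B),C)$ and $\gamma^{-1}=\gamma_{\D,\C}$, are morphisms of \emph{closed} assemblers, and that $\boxtimes$ is a bifunctor $c\Assemb\times c\Assemb\to c\Assemb$ (so that the expressions above make sense). For bifunctoriality, if $F,G$ are morphisms of closed assemblers then $F\times G$ preserves the initial object and disjointness coordinatewise, preserves pullbacks (computed coordinatewise), and sends a generating covering family $\{(A_i,B_j)\to(A,B)\}_{(i,j)\in I\times J}$ to $\{(FA_i,GB_j)\to(FA,GB)\}_{(i,j)\in I\times J}$, again one of the generating families. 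The functors $\alpha,\gamma,\alpha^{-1},\gamma^{-1}$ are isomorphisms of underlying categories, so they automatically preserve the initial object, pullbacks, and disjointness; continuity may be checked on the generating coverage (which suffices because they preserve pullbacks), and there it is immediate — e.g.\ $\alpha$ carries $\{((A_i,B_j),C_k)\to((A,B),C)\}_{(i,j,k)\in I\times J\times K}$ to $\{(A_i,(B_j,C_k))\to(A,(B,C))\}_{(i,j,k)\in I\times J\times K}$, a generating covering family of $(A,(B,C))$ in $\C\boxtimes(\D\boxtimes\E)$. The projections $\lambda_\C,\rho_\C$ preserve all limits, hence pullbacks and the initial object, and they preserve disjointness because a coordinatewise pullback $(A_i\times_A A_{i'},\,B_i\times_B B_{i'})$ is initial only when each of its coordinates is.

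The step needing the most care — and the one that makes transparent why $\lambda$ and $\rho$ cannot be isomorphisms — is continuity of $\lambda$ and $\rho$, where the shape of the $\boxtimes$-topology relative to the topology on $\S^{\Assemb}$ is what matters. Concretely, one checks that the $\lambda_\C$-image of every generating covering family of an object $(A,B)$ of $\S^{\Assemb}\boxtimes\C$ is a covering family of $B$ in $\C$: because the generating covering families of an object $A$ of $\S^{\Assemb}$ are the trivial singleton families (with the empty family attached only to the initial object), a generating covering family of $(A,B)$ is of the form $\{(A,B_j)\to(A,B)\}_{j\in J}$ for a covering family $\{B_j\to B\}_{j\in J}$ of $B$, together with the empty covering family of the initial object $(\initial,\initial)$; these map under $\lambda_\C$ to $\{B_j\to B\}_{j\in J}$ and to the empty cover of $\initial$ respectively, as required, and the case of $\rho$ is symmetric. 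Finally $\lambda_\C$ is two-to-one on objects — both $(\initial,C)$ and $(*,C)$ are sent to $C$ — so it is not an isomorphism of underlying categories, let alone of assemblers; this is exactly the defect that the $\sma$-product of Section~\ref{sec:monoidal} repairs by discarding the objects $(\initial,C)$ with $C\neq\initial$.
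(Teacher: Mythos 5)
Your argument is essentially the paper's: the coherence axioms reduce to strict identities of the underlying functors on $\C\times\D$, and the only substantive point --- that $\lambda_\C$ and $\rho_\C$ are continuous --- is handled, as in the paper, by reducing to the generating coverage via pullback-preservation and observing that projections of generating families are covering families. Your additional verifications (bifunctoriality of $\boxtimes$, continuity of $\alpha$ and $\gamma$) are what the paper dismisses as direct from the definitions, and your enumeration of the generating covers of $\S^\Assemb\boxtimes\C$ glosses over exactly the same degenerate case the paper's proof does (objects $(\initial,B)$ acquire an empty covering family from the empty cover of $\initial$, and its image under $\lambda_\C$ is not a cover of $B$), so the two arguments are on the same footing there.
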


\begin{proof}
  The only part that is not direct from the definitions is checking that
  $\lambda_\C$ and $\rho_\C$ are well-defined morphisms of assemblers.  We focus
  on $\lambda_\C$; the result for $\rho_\C$ will follow analogously.  Since the
  topology on $\S^\Assemb\boxtimes\C$ is generated by a pretopology and since
  $\lambda_\C$ commutes with pullbacks (since pullbacks in $\S^\Assemb\boxtimes \C$ are
  done coordinatewise) it suffices to check that for any covering family $\F$,
  $\lambda_\C\F$ is a covering family.  A covering family in $\S^\Assemb\boxtimes \C$ is
  a finite refinement of families in the coverage.  The projection of a family
  in the coverage is a covering family.  The refinement of a covering family
  can either refine the projection, or it can add some morphisms to the covering
  family (which still keeps it a covering family).  Either way, the projection
  of a covering family is a covering family, as desired.
\end{proof}

Thus $\boxtimes$ does not produce a symmetric monoidal structure on $\Assemb$.
To make this into a monoidal structure it is necessary to rectify this problem.

\begin{definition}
  We consider $\C\mathrel{\tilde\vee}\D$ to be the full subassembler of
  $\C\boxtimes \D$ containing those objects where one coordinate or the other is
  the initial object.  The subassembler $\C\mathrel{\tilde\vee}\D$ is a sieve in
  $\C\boxtimes \D$.

  Define
  \[\C\sma \D \defeq (\C\boxtimes \D) \bs (\C\mathrel{\tilde\vee}\D).\]
\end{definition}

The relationship of $\C\sma\D$ to $\C\boxtimes \D$ has the exact flavor of
Example~\ref{ex:nonsaturated}, with objects in $\C\mathrel{\tilde \vee}\D$ being
obstructions to objects in $\C\boxtimes\D$ being disjoint.  Once these objects
are removed, moreover, the natural transformations
$\alpha, \gamma, \lambda,\rho$ are all well-defined with $\boxtimes$ replaced by
$\sma$, since \emph{as categories} $\C\sma\D$ can be thought of as a full
subcategory of $\C\boxtimes \D$.

\begin{lemma} \label{lem:smasym}
  $(c\Assemb, \S^\Assemb, \sma)$ is a symmetric monoidal category.
\end{lemma}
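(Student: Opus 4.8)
The plan is to bootstrap from Lemma~\ref{lem:boxtimes_nats}, which already supplies natural transformations $\alpha,\gamma,\lambda,\rho$ satisfying every symmetric monoidal axiom for $\boxtimes$ except that $\lambda_\C$ and $\rho_\C$ be invertible. Passing from $\boxtimes$ to $\sma$ deletes precisely the sieve $\C\mathrel{\tilde\vee}\D$ of ``mismatched'' objects obstructing that invertibility, so three things remain to be checked: (i) $\C\sma\D$ is again a closed assembler and $\sma$ is a bifunctor $c\Assemb\times c\Assemb\to c\Assemb$; (ii) $\alpha,\gamma,\lambda,\rho$ descend to the $\sma$-setting, with $\lambda$ and $\rho$ now isomorphisms; (iii) the coherence diagrams survive the passage from $\boxtimes$ to $\sma$.

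The crux is (i), since deleting a sieve can a priori destroy pullbacks. I would first record the elementary fact that \emph{in any assembler a morphism $h:A\to\initial$ with initial codomain is an isomorphism}: the structural map $u:\initial\to A$ satisfies $hu=\mathrm{id}_\initial$, and then $h(uh)=(hu)h=h=h\,\mathrm{id}_A$ forces $uh=\mathrm{id}_A$ because $h$ is monic by axiom (M); with the standing uniqueness assumption, $A=\initial$. Granting this, as a category $\C\sma\D$ is the full subcategory of $\C\times\D$ on pairs that are initial in one coordinate exactly when initial in the other. Given a cospan there, form the coordinatewise pullback $(P,Q)$ in $\C\times\D$. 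If $P$ and $Q$ are both noninitial then $(P,Q)$ lies in $\C\sma\D$, and since $\C\sma\D$ is a full subcategory containing the whole pullback square, $(P,Q)$ remains the pullback there. Otherwise one of $P,Q$ is initial, and the fact above forces the corresponding coordinate of any cone over the cospan to be initial, whence — the cone object lying in $\C\sma\D$ — the cone object must be $(\initial,\initial)$ itself; so $(\initial,\initial)$ is the pullback. Thus $\C\sma\D$ has all pullbacks; axioms (I) and (M) are immediate, (R) is then automatic, and the gridded coverage is stable under pullback coordinatewise — using the description $\C\sma\D=(\C\boxtimes\D)\bs(\C\mathrel{\tilde\vee}\D)$ and the fact that the gridded cells which drop out all lie in $\C\mathrel{\tilde\vee}\D$ (compare the mechanism of Example~\ref{ex:square}) — so the topology is a genuine Grothendieck topology. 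For bifunctoriality I would set $(F\sma G)(C,D)=(FC,GD)$ when both coordinates are noninitial and $(\initial,\initial)$ otherwise (this convention is actually needed, since a morphism of assemblers need not reflect initial objects), and check functoriality, continuity, and pullback-preservation; each check again reduces to the fact about morphisms into $\initial$ — e.g.\ a morphism from an object with noninitial image to one that has been collapsed would force the source's image to be initial.

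For (ii): each of $\alpha,\gamma,\lambda,\rho$ is the restriction of its $\boxtimes$-counterpart, and one checks it carries the $\sma$-subcategory into the $\sma$-subcategory — for instance an object of $(\C\sma\D)\sma\E$ has all three coordinates simultaneously (non)initial, so its image in $\C\sma(\D\sma\E)$ again matches — and hence is a morphism of closed assemblers (continuity is clear, as gridded covering families go to gridded covering families). The point of the construction is that $\lambda_\C:\S^\Assemb\sma\C\to\C$ and $\rho_\C$ are now \emph{isomorphisms}: the objects of $\S^\Assemb\sma\C$ are exactly $(\initial,\initial)$ together with the $(*,C)$ for $C$ noninitial, matching $\ob\C$, and the hom-sets match because (again by the fact above) there are no morphisms out of a $(*,C)$ into $(\initial,\initial)$.

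Finally (iii): the pentagon, the hexagon, the symmetry identity, and the triangle identity hold for $\alpha,\gamma,\lambda,\rho$ already over $\boxtimes$ by Lemma~\ref{lem:boxtimes_nats}. Every iterated $\sma$-product of assemblers is a full subcategory of the corresponding iterated $\boxtimes$-product, and the $\sma$-structure maps are the restrictions of the $\boxtimes$-structure maps; since the coherence conditions are equalities of functors, checkable object- and morphism-wise, they restrict. Together with the invertibility of $\lambda$ and $\rho$ from (ii), this yields the symmetric monoidal category $(c\Assemb,\S^\Assemb,\sma)$. I expect the only genuine obstacle to be the closedness in (i): removing a sieve need not preserve pullbacks in general, and the whole argument hinges on the observation that any morphism into an initial object is invertible.
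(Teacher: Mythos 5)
Your proposal is correct and follows essentially the same route as the paper: invoke Lemma~\ref{lem:boxtimes_nats} and then observe that after deleting $\C\mathrel{\tilde\vee}\D$ the unitors become isomorphisms (the paper additionally records $\lambda_{\S^\Assemb}=\rho_{\S^\Assemb}$, which is redundant given the other axioms). Your extra verification that $\C\sma\D$ is still a closed assembler and that $\sma$ is a bifunctor, resting on the observation that any morphism into $\initial$ is invertible, is sound detail that the paper absorbs into the description $\C\sma\D=(\C\boxtimes\D)\bs(\C\mathrel{\tilde\vee}\D)$ and the earlier sieve-removal construction.
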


\begin{proof}
  By Lemma~\ref{lem:boxtimes_nats} all that remains to show is that $\lambda$
  and $\rho$ are natural isomorphisms.

  We check that $\lambda_\C$ is an isomorphism $\S^\Assemb\sma \C \rto \C$.  The
  objects of $\S^\Assemb\sma \C$ are the initial object and pairs $(*,A)$ with $A\in
  \C$.  $\lambda_\C$ takes the first to the initial object and the second to
  itself.  The structure on morphisms is analogous.  Since the functor is a
  bijection on both objects and morphisms, it is an isomorphism, as desired.

  We must also check that $\lambda_{\S^\Assemb} = \rho_{\S^\Assemb}$.  This is the case because $\S^\Assemb
  \sma \S^\Assemb$ has a unique nontrivial map to $\S^\Assemb$; since both $\lambda_{\S^\Assemb}$ and
  $\rho_{\S^\Assemb}$ areisomorphisms, they must be equal.
\end{proof}

Each object of $\C\tilde\vee\D$ sitting inside $\C\boxtimes \D$ has an empty
covering family.  It is thus tempting to conclude that $K(\C\boxtimes \D)$ may
already have the correct symmetric monoidal structure, at least up to homotopy.
In general this is \emph{not} the case, as structures similar to those in
Example~\ref{ex:square} arise.  In fact, most objects in
$\C^\circ\times \D^\circ$ have \emph{no} nontrivial finite disjoint covering
families.  Indeed, suppose that $(A,B)$ is an object for which the empty family
is not a covering family.  Then any nontrivial covering family will have two
elements that share a coordinate, and will therefore not be disjoint.  On the
other hand, in $\C\sma \D$, any pair of finite disjoint covering families
produces a finite disjoint covering family, since only one of the coordinates
being disjoint is sufficient for disjointness.

For a more concrete example, consider $K(\S^\Assemb \boxtimes \S^{\Assemb})$.
$\S^\Assemb \boxtimes \S^\Assemb$ has three noninitial objects and no nontrivial
finite disjoint covering families.  Thus $K(\S^\Assemb\boxtimes \S^\Assemb)
\simeq \S \vee\S\vee\S$.  If the functor were correctly monoidal it would
instead be $\S$, so we see that $\boxtimes$ is not the desired structure, even
up to homotopy.

The monoidal structure of assemblers gives rise to an interesting phenomenon: in
general, the category $\SC(\C\sma\D)$ will \emph{not} be saturated.  We give an
explicit example to illustrate how this can arise, and we stress that such
examples are the norm and not the exception:

\begin{example} \label{ex:nonsaturated}
  Let $\C$ be the assembler $\Seg$, discussed in Example~\ref{ex:seg}. In
  $\SC(\Seg\sma\Seg)$ there is the following diagram of morphisms:
  \begin{center}
  \begin{tikzpicture}[scale=1.5]
    \draw (0,0) rectangle (1,1);

    \draw[xshift=-1pt] (2,2) rectangle (2.3,2.7);
    \draw[yshift=-1pt] (2.3,2) rectangle (3,2.3);
    \draw[yshift=1pt] (2,2.7) rectangle (2.7,3);
    \draw[xshift=1pt] (2.7,2.3) rectangle (3,3);
    \draw (2.3,2.3) rectangle (2.7,2.7);
    
    \draw[xshift=-1pt,yshift=-1pt] (4,0) rectangle (4.3,0.3);
    \draw[xshift=-1pt] (4,0.3) rectangle (4.3,0.7);
    \draw[xshift=-1pt, yshift=1pt] (4,0.7) rectangle (4.3,1);
    \draw[yshift=-1pt] (4.3,0) rectangle (4.7,0.3);
    \draw (4.3,0.3) rectangle (4.7,0.7);
    \draw[yshift=1pt] (4.3,0.7) rectangle (4.7,1);
    \draw[xshift=1pt, yshift=-1pt] (4.7,0) rectangle (5,0.3);
    \draw[xshift=1pt] (4.7,0.3) rectangle (5,0.7);
    \draw[xshift=1pt, yshift=1pt] (4.7,0.7) rectangle (5,1);

    \draw[->] (1.1,1.1) to  (1.9,1.9);
    \draw[->] (3.1,1.9) to node[above=-.7ex,sloped] {$\sim$} (3.9,1.1);
    \draw[->] (1.1,0.5) to node[above=-.7ex,sloped] {$\sim$} (3.9,0.5);
  \end{tikzpicture}
\end{center}
A weak equivalence is a morphism that can be written as a finite composition of
decompositions into ``grids'' on each rectangle; the two marked morphisms are
therefore weak equivalences, but the unmarked one is not.  Thus
$\SC(\Seg\sma\Seg)$ does not satisfy the saturation axioms.

\end{example}

\section{The interaction of $K$-theory and the monoidal
  structure} \label{sec:Kmonoidal}

In an ideal world, the $K$-theory functor would be monoidal and we could
construct ring spectra simply by finding monoid objects inside $\Assemb$.
However, that is not the case: even the category of pointed finite sets does not
produce an honest ring spectrum, but rather an $E_\infty$-spectrum, as it is not
possible to make a completely rigid model of both of the monodial structures
(disjoint union and product) on finite sets.

However, we can produce the next best thing: a bipermutative category.
\begin{definition}
  A category $\C$ is \emph{permutative} if it is equipped with a functor
  $\oplus: \C\times\C \rto \C$, an object $0\in \C$, and a natural isomorphism
  $\gamma: a\oplus b \cong b \oplus a$ satisfying the extra
  conditions that
  \begin{itemize}
  \item[(1)] $a \oplus (b \oplus c) = (a\oplus b) \oplus c$,
  \item[(2)] $a \oplus 0 = a = 0 \oplus a$, and
  \item[(3)] $\gamma_{a,0)} = 1_a$ and the following diagrams commute:
    \[
      \begin{inline-diagram}
        { a \oplus b & b\oplus a & a \oplus b\\};
        \to{1-1}{1-2}^\gamma \to{1-2}{1-3}^\gamma
        \node (m-100-100) at (m-1-1) {\phantom{$a\oplus b$}};
        \diagArrow{bend right}{100-100}{1-3}_{1_{a\oplus b}}
      \end{inline-diagram}
      \qquad
      \begin{inline-diagram}
        { a\oplus b \oplus c & & c\oplus a \oplus b \\
          & a\oplus c \oplus b. \\};
        \to{1-1}{1-3}^\gamma
        \to{1-1}{2-2}_{1\oplus \gamma} \to{2-2}{1-3}_{\gamma\oplus 1}
      \end{inline-diagram}
    \]
  \end{itemize}
  In other words, a permutative category is a symmetric monoidal category with
  strict associativity and unit.  This is referred to as the \emph{additive}
  structure on the permutative category.

  A permutative category $\C$ is \emph{bipermutative} if it is equipped with a second
  permutative structure $(\C,\otimes,1)$ (which is referred to as the
  \emph{multiplicative} structure)and natural distributivity maps
  \[d_l: (a\otimes b) \oplus (a'\otimes b) \rto (a\oplus a') \otimes b\]
  and
  \[d_r: (a\otimes b) \oplus (a \otimes b') \rto a \otimes (b\oplus b')\]
  satisfying certain compatibility requirements, described in \cite[Definition
  3.3, 3.6]{elmendorfmandell}.
\end{definition}

It is not immediately obvious why distributivity maps cannot be ``rigidified''
away, when it is known why monoidal structures can generally be replaced with
rigid versions.  To help with this, and as it will be used in
Proposition~\ref{prop:biperm}, we give an explicit description of a
bipermutative structure on the category of finite sets.

\begin{example} \label{ex:finset}
  Let $\FinSet$ be the category with
  \begin{description}
  \item[objects] the sets $\emptyset$ and $\{1,\ldots,n\}$ for all natural
    numbers $n$ and
  \item[morphisms] functions between finite sets.
  \end{description}

  The additive permutative structure on finite sets is given by disjoint union,
  where we think of ``concatenating the two sets in order''.  Thus in
  $\{1,\ldots,k\} \oplus \{1,\ldots,\ell\} = \{1,\ldots,k+\ell\}$ we think of
  the first $k$ elements as coming from $\{1,\ldots,k\}$ and the rest as coming
  from $\{1,\ldots,\ell\}$, in the correct order.  (Although morphisms are not
  required to preserve order, we keep track of it here so as to analyse the
  symmetry more precisely.)  The map $\gamma$ is the $k,\ell$-shuffle which
  preserves the order of the two sets and moves the elements past one another.

  The multiplicative permutative structure is via the cartesian product of sets,
  in which the isomorphism $\{1,\ldots,k\} \times \{1,\ldots,\ell\} \cong
  \{1,\ldots,k\ell\}$ is given via the lexicographic ordering of pairs.  With
  this second structure, the natural transformation $d_r$ is the identity map,
  but the natural transformation $d_l$ is \emph{not}, as illustrated below:
  \begin{center}
    \begin{tikzpicture}
      \node at (1,-1) {$a\otimes (b\oplus b')$};
      \node at (6,-1) {$(a \otimes b)\oplus (a \otimes b')$};
      \draw (0,0) rectangle (2,1) (0,1) rectangle (2,2);
      \foreach \i in {10,20,30,40,50,60,70,80,90} 
          \draw[blue!\i!green,->] ({\i/50},0.1) -- ({\i/50},1.9);

      \draw (5,0) rectangle (7,1);
      \draw (5,1.1) rectangle (7,2.1);
      \foreach \i in {5,10,15,20,25,30,35,40,45} 
        \draw[blue!\i!green,->] ({\i/25+5},1.2) -- ({\i/25+5},2);
      \foreach \i in {50,55,60,65,70,75,80,85,90}
        \draw[blue!\i!green,->] ({(\i-45)/25+5},0.1) -- ({(\i-45)/25+5},0.9);
    \end{tikzpicture}
  \end{center}
  In this picture, the rectangle represents the set of pairs, with the arrows
  showing the induced ordering, with greener arrows before bluer arrows.  Note
  that in the two pictures the orderings are not the same.
\end{example}

We now investigate the structures on assemblers that produce morphisms of
bipermutative categories.  We begin with a simple observation.

\begin{lemma}
  Let $\C$ be an assembler.  Then $\W(\C)$ is a permutative category with the
  permutative structure induced by the permutative structure on $\FinSet$. 
\end{lemma}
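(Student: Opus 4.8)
The plan is to pull the additive permutative structure of $\FinSet$ (ordered concatenation of sets, with the shuffle as symmetry, as in Example~\ref{ex:finset}) back along the functor $\W(\C)\to\FinSet$ that sends a tuple $\SCob{A}{i}$ to its indexing set $I$ and a morphism to its set map. Explicitly, one sets $\SCob{A}{i}\oplus\SCob{B}{j}$ to be the tuple indexed by the concatenation $I\oplus J$ which restricts to $(A_i)$ on $I$ and to $(B_j)$ on $J$; on morphisms, $f\oplus g$ has set map $f\oplus g$ and keeps the component maps $f_i$ and $g_j$ unchanged. The unit $0$ is the empty tuple $\{\}_\emptyset$, and $\gamma_{\SCob{A}{i},\SCob{B}{j}}$ has as its set map the $(|I|,|J|)$-shuffle of $\FinSet$ and as its component maps the identities of the $A_i$ and $B_j$.

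First I would check that $\oplus$ actually lands in $\W(\C)$. The $\Tw(\C)$-disjointness condition for $f\oplus g$ is verified coordinatewise: two indices with a common image either both lie in $I$ or both lie in $J$, since indices from the two halves land in the disjoint halves of $J\oplus J'$, so the condition is inherited from $f$ or from $g$. For the covering condition defining $\W(\C)$, each index $\ell$ of the target of $f\oplus g$ lies in exactly one half, and the family of component maps over $\ell$ is precisely the family over $\ell$ for $f$ (resp.\ for $g$), which is a finite disjoint covering family by hypothesis. Compatibility of $\oplus$ with composition and identities is immediate, since concatenation does not alter which component maps get composed, and since the identity morphism of a tuple lies in $\W(\C)$: each singleton family $\{1_{A_i}\}$ is a covering family and is disjoint because disjointness only constrains pairs of \emph{distinct} indices.

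Next I would verify the permutative axioms. Associativity $a\oplus(b\oplus c)=(a\oplus b)\oplus c$ and unitality $a\oplus 0=a=0\oplus a$ hold strictly, because $\FinSet$ is skeletal with strictly associative and strictly unital concatenation, so the two sides have literally the same indexing set and the same list of component maps. For the symmetry, $\gamma$ is a morphism of $\W(\C)$ since its set map is a bijection and each component map is an identity, whose singleton family is a covering family; naturality of $\gamma$, together with the identity $\gamma_{a,0}=1_a$ and the two coherence diagrams of axiom~(3), reduces on set maps to the corresponding identities already known to hold in the permutative category $\FinSet$, and on component maps to trivialities among the $f_i$. Hence all axioms hold.

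The step I expect to demand the most care is not any single verification but the up-front bookkeeping that makes associativity and unitality \emph{strict}: one must commit to taking the indexing ``finite sets'' of objects of $\Tw(\C)$, hence of $\W(\C)$, to be objects of $\FinSet$ with $\oplus$ given by ordered concatenation, so that all of the coherence data is genuinely transported from the permutative structure on $\FinSet$ rather than reconstructed by hand. Once that convention is fixed, the remaining content is a routine unwinding of the definitions of $\Tw(\C)$, $\W(\C)$, and covering families.
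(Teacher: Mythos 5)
Your proof is correct and follows the same route as the paper: define $\oplus$ by ordered concatenation of tuples with the empty tuple as unit, and transport the permutative structure (including the shuffle symmetry) from $\FinSet$. The paper states this in two lines and leaves the verifications implicit; your write-up simply fills in the routine checks (that $f\oplus g$ and $\gamma$ lie in $\W(\C)$, strictness of associativity and unitality), which match what the paper intends.
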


\begin{proof}
  The category $\W(\C)$ has as objects tuples $\{A_i\}_{i\in I}$ with $I\in
  \FinSet$ and $A_i\in \C^\circ$ for all $i$.  We define
  \[\{A_i\}_{i=1}^n \oplus \{B_j\}_{j=1}^m = \{C_k\}_{k=1}^{n+m},\]
  where $C_k = A_k$ if $k \leq n$ and $C_k = B_{k-n}$ for $k > n$. The $0$
  object is the empty tuple $\{\}_\emptyset$.  As $\FinSet$ has a permutative
  structure, this inherits the same structure.
\end{proof}

Given a sufficiently strict monoidal product on $\C$, this permutative structure
can be extended to a bipermutative structure.

\begin{notation}
  Given $I,J\in \FinSet$, write $I\otimes J$ for the set $I\times J$ with the
  lexicographic ordering, associated via this ordering with the set
  $\{1,\ldots,|I|\cdot|J|\}$.  We will write $(i,j)\in I\otimes J$ for the
  image of $(i,j)$ under this ordering.
\end{notation}

\begin{definition}
  Let $\C$ be an assembler with a multiplication $\mu: \C \sma \C \rto \C$.  A
  \emph{symmetry for $\mu$} is a natural isomorphism
  $\gamma: \mu \Rto \mu\circ \tau$, where $\tau$ swaps the two factors of $\C$,
  satisfying the extra condition that $\gamma_{A,B} \circ \gamma_{B,A} =
  1_{\mu(A,B)}$ for all $A,B$.  
\end{definition}

\begin{proposition} \label{prop:biperm} Let $\C$ be a monoid object in $\Assemb$
  with product $\mu: \C\sma \C \rto \C$.  Then the permutative structure on
  $\W(\C)$ extends to a ring structure, with
  \[\{A_i\}_{i\in I} \otimes \{B_j\}_{j\in J} \cong \{\mu(A_i,B_j)\}_{(i,j)\in
      I\otimes J}.\]  If in addition $\C$ is equipped with a symmetry for $\mu$
  then this ring structure is a bipermutative structure.
\end{proposition}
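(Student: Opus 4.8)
The plan is to verify the bipermutative axioms of \cite[Definition 3.3, 3.6]{elmendorfmandell} directly, reducing each one to either the corresponding axiom for $\FinSet$ (Example~\ref{ex:finset}) or to the monoid/symmetry axioms on $\C$. First I would pin down the multiplicative structure precisely: set $\{A_i\}_{i\in I}\otimes\{B_j\}_{j\in J} = \{\mu(A_i,B_j)\}_{(i,j)\in I\otimes J}$ on objects, using the lexicographic identification $I\otimes J\cong\{1,\ldots,|I||J|\}$ from the Notation above, and define it on morphisms by applying $\mu$ to component maps and $-\otimes-$ to set maps. The unit is the one-element tuple $\{1_\C\}_{\{*\}}$ where $1_\C$ is the (noninitial) unit object of the monoid $\C$; here I use that the unit isomorphisms of a monoid object in $(c\Assemb,\sma,\S^\Assemb)$ are actual equalities on objects, so $\mu(1_\C, A) = A = \mu(A,1_\C)$ as objects of $\C$, which is exactly what makes $\otimes$ strictly unital on $\W(\C)$.

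Next I would check that $\otimes$ is a well-defined functor into $\W(\C)$, i.e.\ that it sends a pair of $\W$-morphisms to a $\W$-morphism. The key point is that if $\{f_i:A_i\rto A_{f(i)}\}$ and $\{g_j:B_j\rto B_{g(j)}\}$ have, over each target index, finite disjoint covering families as fibers, then over $(f(i),g(j))$ the family $\{\mu(f_i,g_j)\}$ is again a finite disjoint covering family: disjointness is inherited from disjointness in either coordinate (as noted in the text, one coordinate disjoint suffices in $\sma$), and $\mu$ being a morphism of assemblers sends covering families to covering families, so the product of two covering families covers $\mu(A,B)$. Strict associativity of $\otimes$ then follows from strict associativity of the monoid structure $\mu$ on $\C$ together with the associativity of the lexicographic product on $\FinSet$ (which is part of its permutative structure); strict unitality was handled above; and the multiplicative symmetry $\gamma^\otimes_{\{A_i\},\{B_j\}}$ is defined to have set map the $(|I|,|J|)$-block-transpose permutation of $I\otimes J$ and component maps $\gamma_{A_i,B_j}:\mu(A_i,B_j)\rto\mu(B_j,A_i)$, with the involution condition $\gamma^\otimes\circ\gamma^\otimes = 1$ coming from the hypothesis $\gamma_{A,B}\circ\gamma_{B,A} = 1_{\mu(A,B)}$ and from the corresponding fact for the multiplicative symmetry of $\FinSet$.

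The remaining content is the distributivity. Following Example~\ref{ex:finset}, I would take $d_r$ to be the identity (since $\mu(A_i,-)$ distributes over concatenation of tuples on the nose, and the lexicographic ordering of $I\otimes(J\sqcup J')$ already equals the concatenation of $I\otimes J$ and $I\otimes J'$), and $d_l$ to be the block-shuffle permutation on set maps, with identity component maps, exactly the $d_l$ of $\FinSet$ transported through $\mu$. I would then verify the Elmendorf--Mandell compatibility diagrams (\cite[Definition 3.3, 3.6]{elmendorfmandell}): each one is a commuting diagram of tuples whose component maps are built from $\mu$, $\gamma$, and identities, and whose set maps are the $\FinSet$ data; the component-map half commutes by naturality of $\gamma$ and associativity/unitality of $\mu$, and the set-map half commutes because it is literally the corresponding $\FinSet$ bipermutativity diagram, which holds by Example~\ref{ex:finset}. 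I expect the main obstacle to be bookkeeping rather than mathematics: keeping the lexicographic-ordering identifications consistent across the associativity pentagon and the two distributivity hexagons, and checking that the weak-equivalence/cofibration structure on $\W(\C)$ (here, that $\otimes$ preserves $\W$-morphisms and the zero object absorbs, i.e.\ $\{\}_\emptyset\otimes X = \{\}_\emptyset$) is respected by all the structure maps. Once that is organized, every axiom reduces to a fact already available, either the permutative/bipermutative structure of $\FinSet$ or the monoid-plus-symmetry hypotheses on $\C$.
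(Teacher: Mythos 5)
Your proposal is correct and follows essentially the same route as the paper: check well-definedness of $\otimes$ on $\W(\C)$ via disjointness in one coordinate and the generating coverage of $\C\sma\C$ together with $\mu$ being a morphism of assemblers, get strict associativity and unitality from the monoid axioms on $\C$ plus the lexicographic structure on $\FinSet$, build the symmetry from $\gamma$ on components with the $\FinSet$ transpose on set maps, and take $d_l,d_r$ from $\FinSet$ with identity components so that each Elmendorf--Mandell axiom reduces either to the $\FinSet$ bipermutativity or to naturality of $\gamma$ and the involution hypothesis. The only cosmetic difference is that the paper factors the functor as $\W(\C)\times\W(\C)\rto\W(\C\sma\C)\rto\W(\C)$ before checking the same two conditions, which your direct description reproduces.
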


\begin{proof}
  First we must check that the given tensor product is a well-defined functor 
  $\otimes:\W(\C) \times \W(\C) \rto \W(\C)$.   We factor this as
  \[\W(\C) \times \W(\C) \rto^\nu \W(\C\sma\C) \rto^{\W(\mu)} \W(\C).\]
  Here, we define $\nu$ by
  \[\nu(\SCob{A}{i}, \SCob{A'}{i'})  = \{(A_i,A'_{i'})\}_{(i,i')\in I\times
      I'}\] on objects, and define it on morphisms by taking the pair
  $f: \SCob Ai \rto \SCob Bj$ and $f': \SCob{A'}{i'} \rto \SCob{B'}{j'}$ to the
  morphism defined by the set map $f\times f': I\times I' \rto J\times J'$ and
  the component maps
  $(f_i, f_{i'}'): (A_i, A'_{i'}) \rto (B_{f(i)}, B'_{f'(i')})$.  It remains to
  check that this is well-defined: that if both $f$ and $f'$ were morphisms in
  $\W(\C)$ then $\nu(f,f')$ is a morphism in $\W(\C\sma \C)$.  In particular it is
  necessary to check:
  \begin{description}
  \item[disjointness] Given distinct $(i_0,i'_0),(i_1,i'_1)\in I\times I'$ with $f(i_0) = f(i_1)$ and
    $f'(i_0') = f'(i_1')$ we must check that $(f_{i_0},f'_{i'_0})$ and
    $(f_{i_1},f'_{i'_1})$ are disjoint.  
    it suffices to check that this is equal to $\initial$.  However, since the
    pairs were distinct, one of the two coordinates must be different; suppose
    WLOG that it is the first one, so that $i_0 \neq i_1$.  Then the maps
    $f_{i_0}$ and $f_{i_1}$ are disjoint, so $A_{i_0}\times_{B_{f(i_0)}} A_{i_1}
    = \initial$.  But then the pullback has a single initial coordinate when
    computed inside $\C\times \C$, and is therefore equal to $\initial$ in
    $\C\sma \C$, as desired.
  \item[covering] A morphism $f:\SCob Ai \rto \SCob Bj$ in $\W(\C)$ is a
    collection of finite disjoint covering families $\{f_i;A_i \rto B_j\}_{i\in
      f^{-1}(j)}$.  We have already checked that $\nu(f,f')$ is a collection of
    disjoint morphisms; it remains to check that they give covering families.
    In particular, it must be the case that for all $(j,j')\in J\times J'$, the
    family
    \[\{(f_i,f'_{i'}): (A_i,A'_{i'}) \rto (B_j, B'_{j'})\}_{(i,i')\in
        (f,f')^{-1}(j,j')}\] is a covering family.  It is an element of the
    coverage that generates the topology on $\C \sma \C$, so it is a covering
    family, as desired.
  \end{description}

  The unit map for $\C$ is a morphism of assemblers $\S \rto \C$; denote by
  $\nu\in \C$ the image of $*$.  We claim that $\otimes$ is strictly associative
  and $\{\nu\}_{\{1\}}$ is a strict unit for $\otimes$.

  We have that
  \[(\SCob{A}{i} \otimes \SCob{B}{j}) \otimes \SCob Ck =
    \{\mu(\mu(A_i,B_j),C_k)\}_{((i,j),k)\in (I\otimes J)\otimes K}\]
  and
  \[\SCob{A}{i} \otimes (\SCob{B}{j} \otimes \SCob Ck) =
    \{\mu(A_i,\mu(B_j,C_k))\}_{((i,j),k)\in I\otimes (J\otimes K)}.\]
  Since $\FinSet$ is bipermutative, the two indexing sets are equal.  Since $\C$
  is a monoid object, $\mu$ is strictly associative, so the two objects are
  equal, as well.  Thus $\otimes$ is strictly associative.

  Similarly,
  \[\{\nu\}_{\{1\}} \otimes \SCob Ai = \{\mu(\nu, A_i)\}_{(1,i)\in \{1\}\otimes
      I};\]
  since $\mu$ is strictly unital, this is equal to $\SCob Ai$, as desired.  The
  analogous proof works for $\SCob Ai \otimes \{\nu\}_{\{1\}}$.

  If we have the structure map $\gamma$ then symmetry also holds.  Indeed, the
  map
  \[\SCob Ai \otimes \SCob Bj = \{\mu(A_i,B_j)\}_{(i,j)\in I\otimes J} \rto
    \{\mu(B_j,A_i)\}_{(j,i)\in J\otimes I} = \SCob Bj \otimes \SCob Ai\]
  is given by sending $(i,j)\in I\otimes J$ to $(j,i)\in J\otimes I$ using the
  symmetry map in $\FinSet$, and mapping $\mu(A_i,B_j) \rto \mu(B_j,A_i)$ via
  $\gamma_{A_i,B_j}$.  This satisfies the relation for the symmetry map in a
  permutative category because the set map does (as $\FinSet$ is a
  bipermutative category) and the components maps satisfy it by the condition on
  $\gamma$. 

  We now turn to checking the relations for the ring structure.  We define
  distributivity maps
  \begin{align*}
    &d_l: (\SCob{A}{i} \otimes \SCob Bj)\oplus (\SCob{A'}{i'}\otimes \SCob Bj)
      \rto (\SCob Ai \oplus \SCob{A'}{i'}) \otimes \SCob Bj \\
    & d_r: (\SCob Ai \otimes \SCob Bj) \oplus (\SCob Ai \otimes \SCob{B'}{j'})
      \rto \SCob Ai \otimes (\SCob Bj \oplus \SCob{B'}{j'})
  \end{align*}
  to be induced from the distributivity maps on $\FinSet$, with identity maps as
  the components.  

  We follow the naming from \cite[Definition 3.3, Definition
  3.6]{elmendorfmandell}.  Axiom (a) holds because the product of any set with
  the empty set is empty.  Axioms (b), (c), (d), (e) and (f) hold because they hold
  in $\FinSet$ and all component maps in the given diagrams are identities.

  Given the extra structure of the symmetry $\gamma$, Axiom (e') holds in
  $\FinSet$, and over each index this reduces to the diagram
  \begin{diagram}
    { \mu(A,B) & \mu(A,B) \\
      \mu (B,A) & \mu(B,A) \\};
    \arrowsquare{1}{\gamma_{A,B}}{\gamma_{A,B}}{1}
  \end{diagram}
  which commutes.
\end{proof}

\begin{corollary} 
  The $K$-theory of any monoid object in $\C$ is an $A_\infty$-ring spectrum.
  The $K$-theory of a monoid object equipped with a symmetry for the
  multiplication is an $E_\infty$-ring spectrum.
\end{corollary}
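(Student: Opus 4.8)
The plan is to feed the (bi)permutative category $\W(\C)$ produced by Proposition~\ref{prop:biperm} into the standard multiplicative infinite loop space machine. Let $\C$ be a monoid object in $\Assemb$ with product $\mu$. By Proposition~\ref{prop:biperm} the category $\W(\C)$ acquires a ring structure (a bipermutative structure lacking only the symmetry of $\otimes$), which in the language of \cite{elmendorfmandell} is precisely a monoid object in the multicategory of permutative categories; if $\C$ additionally carries a symmetry for $\mu$, then $\W(\C)$ is a genuine bipermutative category, i.e.\ a commutative monoid object in that multicategory. One should not expect to do better than an $A_\infty$-structure even in the strict case: as Example~\ref{ex:finset} already shows, the distributivity map $d_l$ on $\W(\C)$ is genuinely not the identity, so the multiplication cannot be rigidified.

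First I would observe that the object to which $\mathbf{B}$ is applied in Definition~\ref{def:Kass} is exactly the Segal $\Gamma$-space of the permutative category $\W(\C)$: by the natural equivalence $\W(X\sma\C)\simeq\bigoplus_{x\in X^\circ}\W(\C)$ of Definition~\ref{def:Kass}, the assignment $X\mapsto N\W(X\sma\C)$ is the usual $\Gamma$-space model attached to $\W(\C)$, so $K(\C)$ is the $K$-theory spectrum of this permutative category. I would then invoke that the $K$-theory of permutative categories is a multifunctor from the multicategory of permutative categories to symmetric spectra \cite{elmendorfmandell}; since a multifunctor carries (commutative) monoid objects to (commutative) monoid objects, $K(\C)$ is an $A_\infty$-ring spectrum, and an $E_\infty$-ring spectrum when $\C$ carries a symmetry for $\mu$. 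Equivalently, one can keep everything on the $\Gamma$-space level: the multiplication of Proposition~\ref{prop:biperm} together with its distributivity isomorphisms --- which encode the identification $(\bigoplus_X A)\otimes(\bigoplus_Y B)\cong\bigoplus_{X\times Y}(A\otimes B)$ --- assemble into a bilinear pairing $\W(X\sma\C)\times\W(Y\sma\C)\rto\W((X\sma Y)\sma\C)$, natural in $X$ and $Y$, carrying the required unit, associativity, and, given the symmetry, commutativity coherences; this makes $K^\Gamma(\C)$ a (commutative) ring object in $\Gamma$-spaces, and applying the lax symmetric monoidal functor $\mathbf{B}$ then finishes.

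The main obstacle is coherence bookkeeping: one must check that the identification of $X\mapsto N\W(X\sma\C)$ with the Segal $\Gamma$-space of $\W(\C)$ is compatible with whichever multiplicative machine one invokes, so that the bipermutative coherence data of Proposition~\ref{prop:biperm} really do transport to the spectrum level --- equivalently, that the unit, associativity, and symmetry diagrams for the induced pairing on $K^\Gamma(\C)$ all commute. These reduce levelwise to axioms already verified in Proposition~\ref{prop:biperm}, so nothing is unexpected, but the argument is delicate precisely because $K(\C)$ is constructed here from $\Gamma$-spaces rather than directly from the permutative-category machine.
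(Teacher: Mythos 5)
Your proposal is correct and follows essentially the same route as the paper: the corollary is meant to follow directly from Proposition~\ref{prop:biperm} together with the Elmendorf--Mandell multiplicative machinery for (bi)permutative categories (the paper later invokes \cite[Corollary 3.9]{elmendorfmandell} in exactly this way for Theorem~\ref{thm:Einfty}). Your extra care in identifying $X\mapsto N\W(X\sma\C)$ with the Segal $\Gamma$-space of the permutative category $\W(\C)$ is a reasonable way of making explicit a compatibility the paper leaves implicit, but it is not a departure from the paper's argument.
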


Using this we can construct some examples of $E_\infty$-ring spectra.

\begin{example}
  For a group $G$, let $\Fin_G$ be the assembler with
  \begin{description}
  \item[objects] pairs of an integer $m \geq 0$ together with a finite set of
    tuples of integers of length $m$ equipped with a $G$-action, and
  \item[morphisms] $G$-equivariant inclusions of sets.
  \end{description}
  This has a multiplication $\mu: \Fin_G \sma \Fin_G \rto \Fin_G$ taking a pair
  $(m,S) \sma (n,T)$ to $(m+n, S\times T)$, where an element in $S\times T$ is
  modeled as a tuple of length $m+n$, except when $m = 1$ and $|S| = 1$, in
  which case $(m, S) \sma (n,T) = (n,T)$ (and similarly for the case when $n=1$
  and $|T| = 1$).  The $G$-action on $S\times T$ is diagonal.

  The multiplication is strictly associative and strictly unital, with unit
  $(1, \{1\})$.  It is also equipped with a symmetry for the multiplication,
  taking $S\times T$ to $T\times S$ via a shuffle swapping the first $m$ and
  last $n$ coordinates applied to each tuple.

  Thus, by the corollary, $K(\Fin_G)$ is an $E_\infty$-ring spectrum.
\end{example}

Moreover, \cite[Theorem 9.3.8]{elmendorfmandell} immediately implies the
following:
\begin{proposition}
  A symmetric monoid map of symmetric monoid objects in assemblers induces an
  $E_\infty$-map on the $K$-theories.
\end{proposition}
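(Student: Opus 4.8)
The plan is to reduce to the functoriality already present in the
  Elmendorf--Mandell machine.  Recall from Proposition~\ref{prop:biperm} that a
  monoid object $\C$ in $\Assemb$ equipped with a symmetry for its
  multiplication $\mu$ produces a bipermutative category $\W(\C)$: the additive
  permutative structure is the one on $\W(\C)$ inherited from $\FinSet$, the
  multiplicative structure has $\{A_i\}_{i\in I} \otimes \{B_j\}_{j\in J} =
  \{\mu(A_i,B_j)\}_{(i,j)\in I\otimes J}$, and the distributivity maps are the
  ones coming from $\FinSet$ with identity component maps.  By
  Definition~\ref{def:Kass} the spectrum $K(\C)$ is obtained from the
  $\Gamma$-space $X \rgoesto \W(X\sma\C)$, so it is exactly the output of the
  $K$-theory machine on the permutative category $\W(\C)$, and the
  $E_\infty$-ring structure is the one induced by the bipermutative structure on
  $\W(\C)$.

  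First I would check that a symmetric monoid map $F\colon \C \rto \D$ --- a
  morphism of assemblers commuting strictly with the multiplications and units
  and carrying the symmetry of $\mu_\C$ to that of $\mu_\D$ --- induces a
  \emph{strict} morphism of bipermutative categories $\W(F)\colon \W(\C) \rto
  \W(\D)$.  On the additive side this is automatic: $\W(F)$ acts only on the
  component objects and morphisms of a tuple and leaves the indexing sets alone,
  so it commutes on the nose with $\oplus$, with the zero tuple, and with the
  shuffle isomorphisms.  On the multiplicative side, $\W(F)$ sends
  $\{\mu_\C(A_i,B_j)\}_{(i,j)}$ to $\{F\mu_\C(A_i,B_j)\}_{(i,j)} =
  \{\mu_\D(FA_i,FB_j)\}_{(i,j)}$ since $F$ is a strict monoid map, so it
  strictly commutes with $\otimes$ and preserves the unit tuple; it respects
  both distributivity maps because their component maps are identities and their
  underlying set-level maps come from $\FinSet$, on which $\W(F)$ is the
  identity; and it respects the multiplicative symmetry because $F$ intertwines
  the two symmetries for $\mu$.

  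The conclusion then follows from the functoriality of the $K$-theory machine
  on bipermutative categories: by \cite[Theorem 9.3.8]{elmendorfmandell}
  (equivalently, because that machine is a multifunctor out of the multicategory
  of bipermutative categories) a strict morphism of bipermutative categories is
  carried to a map of $E_\infty$-ring spectra.  Applying this to $\W(F)$ shows
  that $K(F)\colon K(\C) \rto K(\D)$ is a map of $E_\infty$-ring spectra, which
  in particular recovers the ring homomorphism $K_*(\C) \rto K_*(\D)$ asserted
  in Theorem~\ref{thm:maina}.

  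The only real subtlety I anticipate is bookkeeping rather than mathematics:
  one must confirm that the $\Gamma$-space model of $K$-theory used throughout
  (Definition~\ref{def:Kass}) \emph{is} the machine to which
  \cite[Theorem 9.3.8]{elmendorfmandell} applies, so that the
  $E_\infty$-structure on $K(\C)$ furnished by Proposition~\ref{prop:biperm} and
  the one transported along $K(F)$ are literally the same structure and not
  merely equivalent ones.  Since the relevant machine is built from the
  permutative category $\W(\C)$ exactly as in \cite{elmendorfmandell}, this is a
  matter of matching definitions rather than of doing new work.
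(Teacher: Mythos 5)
Your proposal is correct and follows essentially the same route as the paper: the paper deduces the statement directly from Proposition~\ref{prop:biperm} (the bipermutative structure on $\W(\C)$) together with \cite[Theorem 9.3.8]{elmendorfmandell}, which is exactly your argument, with the verification that $\W(F)$ is a strict bipermutative morphism made explicit. The bookkeeping point you flag about matching the $\Gamma$-space model with the Elmendorf--Mandell machine is the same implicit identification the paper relies on.
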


Unfortunately, much of the time we do not immediately have a strict model for
the multiplication.  Consider the example we are most interested in: varieties.
Given varieties $X$ and $Y$ over $k$ the product should be the fiber product
$X\times_k Y$.  However, to make $\Var_k$ a monoid object, we must be able to
model this fiber product \emph{rigidly}.  Although this is possible we prefer to
apply a more general technique and work with strong morphisms of monoidal
assemblers.

\begin{definition} \label{def:symmonass}
  A \emph{(symmetric) monoidal assembler} is an assembler $\C$ equipped with a
  map $\mu: \C \sma \C \rto \C$ and a map $\eta: \S \rto \C$ satisfying the
  axioms of a (symmetric) monoidal category.  More concretely, a (symmetric)
  monoidal assembler is equipped with a natural isomorphism $\alpha_{A,B,C}:
  \mu(A,\mu(B,C)) \rto \mu(\mu(A,B),C)$ and natural isomorphisms $\lambda_A:
  \mu(A,\eta(*)) \rto A$ and $\rho_A:\mu(\eta(*),A) \rto A$ (as well as a
  natural isomorphism $\gamma_{A,B}:\mu(A,B) \rto \mu(B,A)$, in the symmetric
  case) satisfying the usual relations of a symmetric monoidal category.

  A symmetric monoidal morphism of symmetric monoidal assemblers $F:\C \rto \D$ is
  a morphism of assemblers together with a natural transformation $\nu_{A,B}:
  \mu_\D(F(A), F(B)) \rto F(\mu_\C(A,B))$ and a morphism $\epsilon: \eta_\D(*)
  \rto F(\eta_\C(*))$ satisfying the relations of a symmetric monoidal functor.
\end{definition}

We begin by showing that the $K$-theory of a symmetric monoidal assembler is an
$E_\infty$-ring spectrum.

\begin{definition}
  For any sequence of objects
  $[A_1,\ldots,A_n]$ in $\C$, define
  \[M[A_1,\ldots,A_n] \defeq \mu(A_1,\mu(A_2,\ldots,\mu(A_{n-1},A_n))).\] Define
  $M[] = \eta(*)$.  Let $\hat \C$ be the assembler with
  \begin{description}
  \item[noninitial objects] finite sequences $[A_1,\ldots,A_n]$ of noninitial
    objects in $\C$, and
  \item[morphisms] given by
    \[\Hom_{\hat \C}([A_1,\ldots,A_n], [B_1,\ldots,B_m]) =
      \Hom_\C(M([A_1,\ldots,A_n]), M([B_1,\ldots,B_m])).\]
  \item[topology] covering families are those which are mapped to covering
    families by $M$.
  \end{description}
  Thus $M$ extends to a functor $M:\hat \C \rto \C$ which is essentially
  surjective, full, and faithful.  We can then define the topology by defining
  the covering families to be the preimages of covering families under $M$.
  Then $M$ is a continuous equivalence of families; the inverse equivalence is
  given by the functor mapping $A$ to the sequence $[A]$---except for objects
  isomorphic to $\eta(*)$, which are mapped to $[]$.

  We define a symmetric monoidal structure on $\hat \C$ on objects by defining
  \[\mu([A_1,\ldots,A_n],[B_1,\ldots,B_m]) = [A_1,\ldots,A_n,B_1,\ldots,B_m].\]
  On morphisms, we define $\mu(f:A \rto A',g:B \rto B')$ to be the morphism given by
  \[\mu(M(A),M(B)) \rto^{\cong} M(\mu(A,B)) \rto^{\mu(f,g)} M(\mu(A',B')) \rto^{\cong}
    \mu(M(A'),M(B')).\] The two marked isomorphisms are uniquely defined using
  the associator, so this is a well-defined morphism.  The associator and the
  two projections are identities.  The symmetry map
  \[\gamma_{A,B}: \mu([A_1,\ldots,A_n], [B_1,\ldots,B_m]) \rto
    \mu([B_1,\ldots,B_m], [A_1,\ldots,A_n]\]
  is induced by the unique shuffle map
  \[\mu(M(A),M(B)) \rto^{\cong} \mu(M(B), M(A)).\]
\end{definition}

\begin{lemma} \label{lem:tomonoid}
  For a (symmetric) monoidal assembler $\C$, the functor $M$ induces a homotopy
  equivalence $K(\hat \C) \rto K(\C)$.  In addition, $\hat \C$ is a monoid
  object in $\Assemb$.
\end{lemma}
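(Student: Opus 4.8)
The plan is to verify two essentially separate claims: that $M:\hat\C\to\C$ induces a homotopy equivalence on $K$-theory, and that $\hat\C$ is a strict monoid object in $\Assemb$. For the first, I would appeal to the d\'evissage/equivalence machinery already available: by construction $M$ is essentially surjective, full, and faithful as a functor of underlying categories, and the topology on $\hat\C$ is \emph{defined} as the preimage of the topology on $\C$, so $M$ is continuous and reflects covering families. Thus $M$ is an equivalence of sites in the appropriate sense, with an explicit quasi-inverse $N$ sending $A$ to $[A]$ (and objects isomorphic to $\eta(*)$ to $[\,]$, which is forced since $M[\,]=\eta(*)$ must be initial-respecting---one should double-check that $\eta(*)$ is noninitial and that this special case is the only obstruction to $N$ being a genuine functor). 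Since $MN=\mathrm{id}_\C$ and $NM$ is naturally isomorphic to $\mathrm{id}_{\hat\C}$ through isomorphisms in $\hat\C$, and since the $K$-theory functor $K:\Assemb\to\Sp$ (Theorem~\ref{thm:K0}) sends natural isomorphisms of morphisms of assemblers to homotopies, $K(M)$ is a homotopy equivalence.

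For the second claim, I would check directly that the data $(\mu,\eta,\alpha,\lambda,\rho)$ on $\hat\C$ described in the preceding definition satisfies the strict monoid axioms. On objects, $\mu$ is literal concatenation of finite sequences, which is strictly associative and strictly unital with unit $[\,]$; on morphisms, $\mu(f,g)$ is defined by conjugating $\mu(M(f),M(g))$ by the canonical associativity isomorphisms in $\C$, and one needs the coherence theorem for monoidal categories to see that these conjugations compose correctly---i.e. that the two ways of re-associating $M\mu([A_1,\dots],[B_1,\dots])$ agree, so that $\mu$ is functorial and strictly associative on morphisms as well. The associator $\alpha$ and the unit isomorphisms $\lambda,\rho$ in $\hat\C$ are identities by construction, so the monoid-object diagrams (associativity pentagon collapsing to an equality, unit triangles collapsing to equalities) hold on the nose. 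Finally one must confirm that $\mu:\hat\C\sma\hat\C\to\hat\C$ and $\eta:\S\to\hat\C$ are genuine \emph{morphisms of assemblers}: they preserve the initial object and disjointness by the componentwise/$M$-pullback description, and they take covering families to covering families because a family in $\hat\C\sma\hat\C$ is covering iff its image under $M\sma M$ is, and $\mu_\C$ (being a morphism of assemblers by hypothesis) sends that to a covering family in $\C$, whose preimage is the relevant covering family in $\hat\C$.

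The main obstacle is the coherence bookkeeping in the middle step: showing that $\mu$ on morphisms of $\hat\C$ is well defined and strictly associative requires invoking Mac Lane coherence to see that all the inserted associativity isomorphisms of $\C$ cancel appropriately, and requires checking that the symmetry $\gamma$ (via the unique shuffle isomorphism) satisfies $\gamma_{B,A}\circ\gamma_{A,B}=1$ and the hexagon---again reducing to uniqueness of coherence isomorphisms in the symmetric monoidal category $\C$. Everything else is routine: the site-theoretic equivalence is formal once one has the explicit quasi-inverse, and the strictness of $\mu,\eta$ on $\hat\C$ is immediate from the sequence-concatenation model. I would therefore organize the write-up as (1) construct $N$ and check $MN=\mathrm{id}$, $NM\cong\mathrm{id}$, deduce the $K$-equivalence; (2) check $\mu,\eta$ are morphisms of assemblers; (3) invoke coherence to verify strict associativity/unitality on morphisms, completing the proof that $\hat\C$ is a monoid object.
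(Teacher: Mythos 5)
Your proposal is correct and takes essentially the same route as the paper: the paper gets the $K$-equivalence by observing that $\C$ sits inside $\hat\C$ as the full subassembler of sequences of length at most one (the quasi-inverse to $M$ you describe), so that $M$ is an equivalence of underlying categories compatible with the topologies and hence induces an equivalence on $\W$-categories and on $K$-theory; the monoid claim is likewise proved by checking that $\eta$ and $\mu$ are morphisms of assemblers (using the associator isomorphism $\mu(M(X),M(Y))\cong M(\mu(X,Y))$ to push generating covering families through, exactly as you do) and noting that concatenation is strictly associative and unital on the nose. Your appeal to coherence for the morphism-level well-definedness of $\mu$ corresponds to the paper's remark that the inserted comparison isomorphisms are uniquely determined by the associator, so there is no substantive difference in approach.
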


\begin{proof}
  By definition, $\C$ is the full subassembler of $\hat \C$ containing all
  sequences of length at most $1$.  As this is an equivalence of categories it
  induces an equivalence $\W(\C) \rto \W(\hat C)$, and thus induces an equivalence
  on $K$-theories, as desired.
  
  We now check that $\hat \C$ is a monoid object.  First we show that
  $\mu$ and $\eta$ are well-defined morphisms of assemblers.  For $\eta$ this is
  clear: since $\S^\Assemb$ has no nontrivial covering families and no disjoint
  objects, this simply states that the map $\S^\Assemb \rto \hat \C$ taking $*$
  to $[]$ is a functor.  

  Now consider $\mu$.  It preserves the initial object by definition.  To check
  that it preserves covering families it suffices to check that it preserves all
  families in the coverage.  Thus consider a covering family
  $\{(A_i,B_j) \rto (A,B)\}_{(i,j)\in I\times J}$ in the coverage generating the
  topology of $\hat \C \sma \hat \C$.  By definition, this implies that
  $\{M(A_i) \rto M(A)\}_{i\in I}$ and $\{M(B_j) \rto M(B)\}_{j\in J}$ are
  covering families in $\C$, and thus $\{\mu(M(A_i),M(B_j)) \rto
  \mu(M(A),M(B))\}_{(i,j)\in I\times J}$ is a covering family in $\hat \C$.  By
  definition, the associator induces an isomorphism
  \[\mu(M(X),M(Y)) \rto M(\mu(X,Y))\]
  for all $X$ and $Y$.  Thus the covering family
  $\{\mu(M(A_i),M(B_j)) \rto \mu(M(A),M(B))\}_{(i,j)\in I\times J}$ implies that
  $\{M(\mu(A_iB_j)) \rto M(\mu(A,B))\}_{(i,j)\in I\times J}$ is a covering
  family as well.  Thus $\mu$ preserves covering families.  To check that it
  preserves disjointness note that 

  Since $\hat \C$ is strictly associative and strictly unital the relevant
  commutative diagrams in $\Assemb$ commute on-the-nose, and $\hat \C$ is a
  monoid object.
\end{proof}

Together with \cite[Corollary 3.9]{elmendorfmandell}, the above results prove
the following theorem.

\begin{theorem} \label{thm:Einfty}
  Let $\C$ be a symmetric monoidal assembler.  Then $K(\C)$ is equivalent to a
  strictly commutative ring symmetric spectrum.
\end{theorem}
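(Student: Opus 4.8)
The plan is to chain together the structural results established earlier in this section. By Lemma~\ref{lem:tomonoid}, for a symmetric monoidal assembler $\C$ the replacement $\hat\C$ is an honest monoid object in $\Assemb$ equipped with a symmetry for its multiplication, and the functor $M$ induces a homotopy equivalence $K(\hat\C) \rto K(\C)$. So it suffices to produce a strictly commutative ring spectrum model for $K(\hat\C)$ and transport it along this equivalence; the point of passing to $\hat\C$ is precisely that its multiplication is strictly associative and strictly unital, which is what the bipermutative machinery needs.

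First I would apply Proposition~\ref{prop:biperm} to $\hat\C$: since $\hat\C$ is a monoid object in $\Assemb$ with a symmetry for $\mu$, the permutative category $\W(\hat\C)$ (additive structure from disjoint concatenation of tuples, as in the preceding lemma) carries a bipermutative structure, with tensor product indexed by the lexicographically-ordered product of finite sets and multiplicative unit $\{\nu\}_{\{1\}}$. Next I would feed this bipermutative category into the Elmendorf--Mandell machine. The $\Gamma$-space / spectrum construction $K^\Gamma$ underlying $K$ is built from $\W(- \sma \C)$; since $\W(\hat\C)$ is bipermutative, \cite[Corollary 3.9]{elmendorfmandell} (cited immediately before the theorem) says that the associated $K$-theory spectrum is equivalent to a strictly commutative (i.e. $E_\infty$, realized as a commutative symmetric ring spectrum) symmetric ring spectrum. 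Concretely: the bipermutative structure produces a multifunctorial input to the $K$-theory functor of \cite{elmendorfmandell}, whose output is a commutative ring object in symmetric spectra, and this output is stably equivalent to $K(\hat\C)$.

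Finally I would transport along $M$: the homotopy equivalence $K(\hat\C)\rto K(\C)$ of Lemma~\ref{lem:tomonoid} exhibits $K(\C)$ as stably equivalent to the commutative symmetric ring spectrum just constructed, hence $K(\C)$ is itself equivalent to a strictly commutative ring symmetric spectrum, as claimed.

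The main obstacle is bookkeeping rather than conceptual: one must check that the bipermutative structure on $\W(\hat\C)$ produced by Proposition~\ref{prop:biperm} is exactly of the shape consumed by \cite[Corollary 3.9]{elmendorfmandell}, i.e. that the $\Gamma$-space $X\mapsto \W(X\sma\hat\C)$ with its smash-product pairings assembles into a lax symmetric monoidal (multilinear) functor in their sense, with all the coherence cells (distributivity, symmetry, unit) matching. Since Proposition~\ref{prop:biperm} was stated using precisely the axioms of \cite[Definitions 3.3, 3.6]{elmendorfmandell}, and the $K$-theory functor used here is the group-completion/classifying-spectrum functor for which \cite{elmendorfmandell} is designed, this compatibility is essentially built in; the only real work is confirming that the $X\sma(-)$ construction is compatible with the multiplicative structure, which it is because $\sma$ of assemblers distributes over the wedge that defines $X\sma\C$.
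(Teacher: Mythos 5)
Your proposal matches the paper's argument: the paper proves Theorem~\ref{thm:Einfty} exactly by combining Lemma~\ref{lem:tomonoid} (strictification $\hat\C$ with $K(\hat\C)\simeq K(\C)$) with Proposition~\ref{prop:biperm} (bipermutative structure on $\W(\hat\C)$ from the strict multiplication and its symmetry) and then invoking \cite[Corollary 3.9]{elmendorfmandell}. Your extra remarks about checking compatibility with the $\Gamma$-space construction are sensible but do not change the route; this is essentially the same proof.
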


\begin{example}
  Let $\Var_S$ be the assembler of varieties over $S$, and define the monoidal
  structure by $\mu(X,Y) \defeq X\times_S Y$.  Then $K(\Var_S)$ is an
  $E_\infty$-ring spectrum.
\end{example}

We now turn our attention to symmetric monoidal morphisms.

\begin{theorem} \label{thm:ringhom}
  A symmetric monoidal morphism of symmetric monoidal assemblers induces a ring
  homomorphism on $K$-groups.
\end{theorem}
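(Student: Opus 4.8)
The plan is to reduce the statement to the monoid-object case already handled by \cite[Theorem 9.3.8]{elmendorfmandell} (recorded above as the proposition that a symmetric monoid map of symmetric monoid objects in assemblers induces an $E_\infty$-map on $K$-theory), using the strictification $\hat\C$ of Lemma~\ref{lem:tomonoid}. Write $M_\C\colon\hat\C\to\C$ and $M_\D\colon\hat\D\to\D$ for the associated equivalences of Lemma~\ref{lem:tomonoid}; recall that $\hat\C$, $\hat\D$ are monoid objects in $\Assemb$ with $\mu$ given by concatenation of sequences, and are equipped with the symmetry $\gamma$ for $\mu$ constructed in the definition of $\hat{(-)}$, so (by Proposition~\ref{prop:biperm}) they are symmetric monoid objects in the sense needed. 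Let $F\colon\C\to\D$ be a symmetric monoidal morphism, with structure constraint $\nu_{A,B}\colon\mu_\D(FA,FB)\to F(\mu_\C(A,B))$, a natural isomorphism, and unit constraint $\epsilon$. I would first construct an induced \emph{strict} symmetric monoid map $\hat F\colon\hat\C\to\hat\D$: on objects set $\hat F([A_1,\dots,A_n])=[FA_1,\dots,FA_n]$, and on a morphism $h\colon M_\C([A_1,\dots,A_n])\to M_\C([B_1,\dots,B_m])$ of $\hat\C$ set $\hat F(h)=(\nu^{(m)})^{-1}\circ F(h)\circ\nu^{(n)}$, where $\nu^{(k)}\colon M_\D([FC_1,\dots,FC_k])\to F(M_\C([C_1,\dots,C_k]))$ is the evident iterate of $\nu$ assembled from the associators of $\D$ and of $F$. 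This is a natural isomorphism precisely because $\nu$ is, and that is the only point at which invertibility of $\nu$ enters.

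Next I would check that $\hat F$ is a morphism of assemblers: it preserves the initial object, and since $M_\C$, $M_\D$ are equivalences, $F$ preserves disjointness and is continuous, and $\nu^{(\bullet)}$ is an isomorphism, conjugation by $\nu^{(\bullet)}$ carries disjoint pairs to disjoint pairs and covering families to covering families — using that covering families of $\hat\C$ are by definition the $M_\C$-preimages of covering families of $\C$, and that pullback of a covering sieve along an isomorphism is again covering. Then I would verify that $\hat F$ is a strict symmetric monoid map: on objects $\hat F\circ\mu_{\hat\C}=\mu_{\hat\D}\circ(\hat F\sma\hat F)$ because both sides concatenate; the unit is preserved on the nose since $\eta_{\hat\C}(*)=[\,]=\eta_{\hat\D}(*)$ and $\hat F([\,])=[\,]$, so $\epsilon$ plays no role; and compatibility of $\hat F$ with $\mu$ on morphisms and with the symmetry maps $\gamma$ is a diagram chase combining naturality of $\nu$ with the coherence axioms for a symmetric monoidal functor. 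Granting this, \cite[Theorem 9.3.8]{elmendorfmandell} (applied exactly as in the cited proposition) shows $K(\hat F)$ is an $E_\infty$-ring map, so in particular $K_*(\hat F)\colon K_*(\hat\C)\to K_*(\hat\D)$ is a ring homomorphism.

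Finally I would transport along the strictification equivalences. The iterated constraint $\nu^{(\bullet)}$ exhibits a natural isomorphism $M_\D\circ\hat F\cong F\circ M_\C$ of morphisms of assemblers; since $K$ sends naturally isomorphic morphisms of assemblers to homotopic maps of spectra (a natural isomorphism on $\W$ induces a homotopy through the $\Gamma$-space construction), we get $K(M_\D)\circ K(\hat F)\simeq K(F)\circ K(M_\C)$. As $K(M_\C)$ and $K(M_\D)$ are homotopy equivalences (Lemma~\ref{lem:tomonoid}), and the ring structures on $K_*(\C)$, $K_*(\D)$ are by construction (Theorem~\ref{thm:Einfty}) transported along $K_*(M_\C)$, $K_*(M_\D)$ from those on $K_*(\hat\C)$, $K_*(\hat\D)$, it follows formally that $K_*(F)$ respects products and the unit: writing $x=K_*(M_\C)(\tilde x)$, $y=K_*(M_\C)(\tilde y)$, we have $K_*(F)(xy)=K_*(M_\D)K_*(\hat F)(\tilde x\tilde y)=K_*(M_\D)\big(K_*(\hat F)(\tilde x)\cdot K_*(\hat F)(\tilde y)\big)=K_*(F)(x)\cdot K_*(F)(y)$.

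I expect the main obstacle to be the bookkeeping in the second paragraph: proving that $\hat F$ is genuinely \emph{strict} (not merely \emph{strong}) as a symmetric monoid map, which comes down to checking that the iterates $\nu^{(k)}$ interact correctly with the associativity and symmetry isomorphisms used to define the monoidal structures on $\hat\C$ and $\hat\D$ — in effect the coherence theorem for symmetric monoidal functors, carried out explicitly in the assembler setting. Everything else is either definitional or an immediate consequence of results already established.
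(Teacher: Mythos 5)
Your proposal follows essentially the same route as the paper: strictify both sides via $\hat\C$ and $\hat\D$, use the monoidal constraint $\nu$ to produce a strict (symmetric) monoid map $\hat F$ whose $\W$-level structure is governed by $\FinSet$, invoke Elmendorf--Mandell to get a ring map on $K_*(\hat\C)\to K_*(\hat\D)$, and transport along the equivalences $K(M_\C)$, $K(M_\D)$ using the square commuting up to natural isomorphism. In fact you spell out the construction of $\hat F$ and the homotopy-coherence step more explicitly than the paper does, but the argument is the same.
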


\begin{proof}
  The main issue is the question of how strict the symmetric monoidal morphism
  is.  Let $F: \C \rto \D$ be a symmetric monoidal morphism of symmetric
  monoidal assemblers.  Using strictification, there is a diagram
  \begin{diagram}
    { \hat \C & \hat \D \\ \C & \D. \\};
    \arrowsquare{\hat F}{}{}{F}
    \diagArrow{implies, -implies}{2-1}{1-2}
  \end{diagram}
  commuting up to natural isomorphism; thus the map $K_*(\C) \rto K_*(\D)$ is
  the same as the composition
  \[K_*(\C) \rto K_*(\hat \C) \rto K_*(\hat \D) \rto K_*(\D).\]
  The functor $\W(\hat F)$ is strict on
  the additive and multiplicative structure, since $\hat F$ is strict on the
  multiplicative structure by definition, and the rest of the structure arises
  from the structure on $\FinSet$, which is the same in both domain and
  codomain.  Thus by \cite[Theorem 9.3.7]{elmendorfmandell} it is equivalent to
  a map of strictly commutative ring spectra, and thus induces a ring
  homomorphism on homotopy groups.  Since the diagram commutes up to natural
  isomorphism, it commutes up to homotopy after applying $K$-theory\note{CHECK
    THIS!!!}; in particular, the homomorphisms induced on $\pi_*$ must be the
  same.  Thus, since the vertical maps are ring isomorphisms and the top map is
  a ring homomorphism, the induced map $K_*(\C) \rto K_*(\D)$ must also be a
  ring homomorphism.
\end{proof}

With this theorem we now have the two desired examples:
\begin{example}
  Let $\Var_k$ be the assembler of $k$-varieties for $k$ a finite field, and let
  $G = \Gal(\bar k/k)$.  Then the map $\zeta: \Var_k \rto \AFSet_G$ given by $X
  \rgoesto X(\bar k)$ is a strong monoidal map of assemblers, and thus induces
  an $E_\infty$-map on the $K$-theory.
\end{example}

\begin{example}
  Let $f: T \rto S$ be a map of finite type.  Then base change induces an
  $E_\infty$-map of $K$-theories $K(\Var_S) \rto K(\Var_T)$.  In particular, for
  $K$ a number field with $k$ a residue field of its ring of integers, the map
  $K(\Var_{\Spec \mathcal{O}_K}) \rto K(\Var_k)$ is an $E_\infty$-map of spectra.
\end{example}

\section{Interaction with $K_1$} \label{sec:K1}

A model for the $K_1$ of an assembler was given in \cite{Z-ass-pi1}, which
contains the following theorem:
\begin{theorem}[{\cite[Theorem B]{Z-ass-pi1}}] \label{thm:Bpi1}
  For any assembler $\C$, every element of 
  $K_1(\C)$ can be represented by a pair of morphisms
  \[\morpair{A}{B}{f}{g}\]
  in $\W(\C)$.  These satisfy the relations
  \[
    \big[\morpair{A}{B}{f}{f}\big] = 0,   \qquad 
    \big[\morpair{B}{C}{g_1}{g_2}\big] + \big[\morpair{A}{B}{f_1}{f_2}\big] =
    \big[\longmorpair{3em}{A}{C}{g_1f_1}{g_2f_2}\big]  \]
  and 
  \[{\noHorizArrowDetection
      \big[\morpair{A}{B}{f_1}{f_2}\big] + \big[\morpair{C}{D}{g_1}{g_2}\big] =
      \big[\begin{tikzpicture}[baseline] \node (A) at (0,0) {$A\amalg C$}; \node
        (B) at (5em,0) {$B\amalg D$}; \diagDrawArrow{->, bend left}{A}{B}^{f_1\amalg g_1} \diagDrawArrow{->, bend  right}{A}{B}_{f_2\amalg g_2}%
      \end{tikzpicture}\big]
    }
  \]
\end{theorem}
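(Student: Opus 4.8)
The plan is to compute $K_1(\C)=\pi_1 K(\C)$ from a small combinatorial model of the $K$-theory space and read generators and relations off of it, following the template of Nenashev's presentation of $K_1$ of an exact category by ``double short exact sequences'' and its Waldhausen-categorical refinements. Concretely I would work with the $\Gamma$-space model $K(\C)=\mathbf{B}(X\rgoesto\W(X\sma\C))$: since this $\Gamma$-space is special, $\Omega^\infty K(\C)$ is the group completion of the symmetric monoidal category $(\W(\C),\oplus)$ recalled in Section~\ref{sec:Kmonoidal}, so $K_1(\C)$ is the fundamental group of that group completion. When $\C$ is closed one may equivalently use the Waldhausen category $\SC(\C)$ together with the comparison \cite[Theorem 2.1]{Z-ass-pi1}; there a weak equivalence of $\SC(\C)$ is a span $X\lto^p Z\rto^\sigma Y$ with $p\in\W(\C)$ and $\sigma$ bijective on index sets with invertible components, which after reindexing along $\sigma$ is just a covering morphism of $\W(\C)$. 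Under this dictionary a ``double short exact sequence'' is exactly a pair of parallel morphisms $f,g\colon A\rto B$ in $\W(\C)$, the generating datum appearing in the statement.

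First I would attach to each such pair an element $\big[\morpair{A}{B}{f}{g}\big]\in K_1(\C)$: the morphisms $f$ and $g$ give two $1$-cells with the same endpoints in $N\W(\C)$ (equivalently two weak equivalences $A\rto B$ in $\SC(\C)$), whose concatenation is a loop, and after group completion it can be based at $0$ because $[A]-[A]=0$ makes every object invertible. Then I would prove that this assignment is surjective onto $K_1(\C)$: by simplicial approximation every class is represented by a zig-zag of morphisms of $\W(\C)$, and since a morphism of $\W(\C)$ encodes precisely a covering-family decomposition, I can straighten such a zig-zag into a single out-and-back loop by splicing the intermediate legs together with the $\oplus$-operation, reducing any class to a pair of parallel refinements of one common tuple.

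Next I would verify the three relations by producing explicit homotopies. The relation $\big[\morpair{A}{B}{f}{f}\big]=0$ holds because when $f=g$ the out-and-back loop is constant and bounds a disc. The composition relation $\big[\morpair{B}{C}{g_1}{g_2}\big]+\big[\morpair{A}{B}{f_1}{f_2}\big]=\big[\longmorpair{3em}{A}{C}{g_1f_1}{g_2f_2}\big]$ comes from concatenating $1$-cells along the composable morphisms, once one checks that composition of the corresponding weak equivalences in $\SC(\C)$ — computed by a pullback — really induces $g_if_i$ at the level of symbols. The disjoint-union relation $\big[\morpair{A}{B}{f_1}{f_2}\big]+\big[\morpair{C}{D}{g_1}{g_2}\big]=\big[\longmorpair{5em}{A\amalg C}{B\amalg D}{f_1\amalg g_1}{f_2\amalg g_2}\big]$ is precisely the additivity built into the $\Gamma$-space: the Segal equivalence $\W((X\vee Y)\sma\C)\simeq\W(X\sma\C)\times\W(Y\sma\C)$ together with the identification of $\oplus$ on $\W(\C)$ with concatenation of tuples.

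The hard part will be completeness: showing that these three families of relations already present $K_1(\C)$, i.e.\ that they generate the whole kernel of the surjection from the free abelian group on pairs $f,g\colon A\rto B$ onto $K_1(\C)$. The off-the-shelf machinery — such as Muro--Tonks's description of the $1$-type of a Waldhausen $K$-theory spectrum — presupposes structure that $\SC(\C)$ lacks: there is no cylinder functor on $\SC(\C)$, and by Example~\ref{ex:nonsaturated} it need not even be saturated, so those results cannot be invoked as black boxes. I would therefore carry out the relevant van Kampen / $2$-type computation by hand, using the explicit description of $\SC(\C)$ — a morphism is a span whose wrong-way leg is a covering family and whose forward leg is controlled by a map of finite index sets — to show that every relation among the loops above subdivides, using the $2$-cells coming from cofiber sequences in $\SC(\C)$, into a consequence of the three listed ones. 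Keeping the bookkeeping of the index-set maps under control through this argument, and verifying that pullback-composition in $\SC(\C)$ is compatible with the monoidal structure, is where the bulk of the work goes.
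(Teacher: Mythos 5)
The theorem you are proving is not proved in this paper at all: it is imported verbatim from \cite[Theorem B]{Z-ass-pi1}, and the proof there runs precisely through the machinery you declare unusable. The Muro--Tonks model of the $1$-type of a Waldhausen $K$-theory spectrum \cite[Theorem 2.5]{murotonks07} applies to an arbitrary Waldhausen category: neither a cylinder functor nor the saturation axiom is needed for it (those hypotheses are relevant to other results, e.g.\ Carlsson's theorem on infinite products, which is exactly the caveat the paper makes in Section~\ref{sec:afsets}). Indeed the present paper itself applies Muro--Tonks to $\SC(\C\sma\D)$ in the proof of Theorem~\ref{thm:K1mult-real}, notwithstanding Example~\ref{ex:nonsaturated}, and explains that the generators-and-relations statement you are proving is extracted in \cite{Z-ass-pi1} from the stable quadratic module of \cite{murotonks07} specialized to assemblers. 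So your stated reason for abandoning the black box is mistaken, and the route you substitute for it --- ``carry out the relevant van Kampen / $2$-type computation by hand'' --- is exactly the substantive content of the theorem and is left entirely unexecuted.

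Concretely, the claims that carry all the weight are asserted rather than proved. Surjectivity (``straighten such a zig-zag into a single out-and-back loop by splicing the intermediate legs together with the $\oplus$-operation'') is the heart of the matter: in the group completion of $N\W(\C)$ a $1$-cycle is a formal combination of edges with varying endpoints, and reducing it to a single pair of parallel morphisms with common source and target is precisely what the explicit algebraic model of the $1$-type is for; simplicial approximation plus an appeal to splicing does not do it. The verification of the composition relation via pullback-composition in $\SC(\C)$ and the compatibility of the $\Gamma$-space additivity with basepoints are likewise the bookkeeping you defer. Two smaller points: the statement as quoted does not claim that the three relations form a complete presentation, so the ``completeness'' you single out as the hard part is not actually required, whereas the genuinely required surjectivity step is the one you treat most casually; and your translation into $\SC(\C)$ silently assumes $\C$ is closed, while the theorem is stated for all assemblers, so in the general case you would still need to argue in the $\Gamma$-space (or another) model. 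As it stands the proposal is a plausible outline whose key steps are missing and whose justification for departing from the standard proof is incorrect.
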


The particular case of interest in this paper is when $A = B$ is an object of
$\C$ (which can be considered objects of $\W(\C)$ by indexing over a singleton)
and $g$ is the identity morphism.  Then $f$ must be an automorphism of $A$.
We write the generator
\[[A,f] \defeq \big[\morpair AAf{1_A}\big].\]
When restricted to such generators, the above theorem implies the following:
\begin{corollary}\label{cor:K1sp}
  Let $\C$ be an assembler which contains disjoint unions, in the sense that for
  any two objects $A$ and $B$, the pushout of the diagram
  \[A \lto \initial \rto B\]
  exists and has a covering family by the induced inclusions from $A$ and $B$;
  we denote this by $\amalg$.
  
  Every element $[A, f]$ with $A\in \ob\C$ and $f\in \Aut(A)$ represents an
  element in $K_1(\C)$.  These elements sastisfy the relations
  \[[A, 1_A] = 0, \qquad [A, g] + [A, f] = [A, gf] \qquad [A, f] + [B, g] =
    [A\amalg B, f\amalg g].\]
\end{corollary}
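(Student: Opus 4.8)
The plan is to deduce Corollary~\ref{cor:K1sp} directly from Theorem~\ref{thm:Bpi1} by specializing the three relations to generators of the form $[A,f]=\big[\morpair AAf{1_A}\big]$, handling each of the three claimed identities in turn.

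First, the identity $[A,1_A]=0$ is immediate: it is exactly the first relation $\big[\morpair AAf f\big]=0$ of Theorem~\ref{thm:Bpi1} with $f=1_A$, since $[A,1_A]=\big[\morpair AA{1_A}{1_A}\big]$.

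Next, for the composition relation $[A,g]+[A,f]=[A,gf]$, I would apply the second relation of Theorem~\ref{thm:Bpi1} (the ``horizontal composition'' relation) with the choices $B=C=A$, $f_1=f$, $f_2=1_A$, $g_1=g$, $g_2=1_A$. This gives
\[\big[\morpair AAg{1_A}\big]+\big[\morpair AAf{1_A}\big]=\big[\longmorpair{3em}AA{gf}{1_A\circ 1_A}\big],\]
and since $1_A\circ 1_A=1_A$ the right-hand side is precisely $[A,gf]$, as desired. The only thing to observe is that $f,g$ being automorphisms of $A$ makes all of these legitimate morphisms in $\W(\C)$ (an automorphism, viewed as a single-component map indexed over a singleton, is a one-element disjoint covering family and hence lies in $\W(\C)$), so the hypothesis of the theorem is met.

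Finally, for the additivity relation $[A,f]+[B,g]=[A\amalg B,f\amalg g]$, I would use the third relation of Theorem~\ref{thm:Bpi1} with $C,D$ there set to $B,B$ and the relevant maps $f_1=f$, $f_2=1_A$, $g_1=g$, $g_2=1_B$, giving
\[\big[\morpair AAf{1_A}\big]+\big[\morpair BBg{1_B}\big]=\big[\morpair{A\amalg B}{A\amalg B}{f\amalg g}{1_A\amalg 1_B}\big].\]
Since $1_A\amalg 1_B=1_{A\amalg B}$, the right-hand side is $[A\amalg B,f\amalg g]$. The substantive point here—and the main obstacle, such as it is—is justifying that $A\amalg B$, together with the inclusions, is an object of $\C$ on which these maps are defined and that the indexing works out: this is exactly where the hypothesis that $\C$ ``contains disjoint unions'' is used, guaranteeing that the pushout $A\amalg B$ exists with a covering family by the two inclusions, so that $f\amalg g$ and the pair $\big[\morpair{A\amalg B}{A\amalg B}{f\amalg g}{1}\big]$ make sense in $\W(\C)$ and the computation of Theorem~\ref{thm:Bpi1} applies verbatim. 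Once these three specializations are assembled, the corollary follows.
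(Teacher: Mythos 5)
Your handling of the first two relations is fine and matches the paper, but there is a genuine gap in the third one. In Theorem~\ref{thm:Bpi1} the symbols $A,B,C,D$ are objects of $\W(\C)$, i.e.\ finite tuples of objects of $\C$, and the $\amalg$ appearing in the third relation is the disjoint union \emph{in $\W(\C)$}, which just concatenates indexing sets. So applying that relation to $[A,f]$ and $[B,g]$ yields
\[[A,f]+[B,g]=\Big[\,\{A,B\}\;\substack{\xrightarrow{\;(f,g)\;}\\[-0.3ex]\xrightarrow{\;(1_A,1_B)\;}}\;\{A,B\}\,\Big],\]
an automorphism class of the \emph{two-element tuple} $\{A,B\}$, not of the single object $A\amalg B\in\C$. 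Your step ``since $1_A\amalg 1_B=1_{A\amalg B}$, the right-hand side is $[A\amalg B,f\amalg g]$'' silently identifies these two classes, but that identification is precisely the content of the corollary, and it is not a formal consequence of the notation. Your reading also underuses the hypothesis: you invoke the existence of the pushout only so that ``$f\amalg g$ makes sense,'' whereas the covering-family condition is needed for something stronger.

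What is missing is the comparison step the paper carries out: the hypothesis that the two inclusions $A\rto A\amalg B$, $B\rto A\amalg B$ form a covering family means that they define a morphism $\delta:\{A,B\}\rto\{A\amalg B\}$ \emph{in $\W(\C)$}. One then checks that the square $\delta\circ(f,g)=(f\amalg g)\circ\delta$ commutes, and uses the first relation of Theorem~\ref{thm:Bpi1} (giving $[\delta,\delta]=0$) together with the composition relation to trade the pair $\big((f,g),\,1_{\{A,B\}}\big)$ on the tuple $\{A,B\}$ for the pair $\big(f\amalg g,\,1_{A\amalg B}\big)$ on the singleton $\{A\amalg B\}$, i.e.
\[\Big[\{A,B\}\rightrightarrows\{A,B\}\Big]=\Big[\{A,B\}\xrightarrow{\;\delta\;}\{A\amalg B\}\rightrightarrows\Big]\text{-type composites}=[A\amalg B,\,f\amalg g].\]
Without this argument (or some substitute for it), your proof of the third relation only establishes the identity with the $\W(\C)$-level disjoint union on the right-hand side, not the stated identity involving the coproduct in $\C$.
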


\begin{proof}
  The first two relations follow directly from Theorem~\ref{thm:Bpi1} so we
  focus on proving the last case.  The $\amalg$ in the statement of the theorem
  is the disjoint union in $\W(\C)$, which simply takes disjoint unions on
  indexing sets, so it implies that
  \[[A,f] + [B,g] = \big[\morpair{\{A,B\}}{\{A,B\}}{f\amalg g}{1}\big].\] It
  remains to check that the representative on the right is
  $[A\amalg B, f \amalg g]$.
  
  Let $f\amalg g: A \amalg B \rto A \amalg B$ be the morphism on the disjoint
  union in $\C$.  Let $(f,g): \{A,B\} \rto \{A,B\}$ be the morphism induced by
  $f$ and $g$ inside $\W(\C)$.  Let $\delta: \{A,B\} \rto \{A\amalg B\}$ be the
  morphism given by the covering family $\{A \rto A\amalg B, B \rto A\amalg B\}$
  induced from the coproduct in $\C$.  Inside $\W(\C)$, the square
  \begin{diagram}
    { \{A,B\} & \{A,B\} \\ \{A\amalg B\} & \{A \amalg B\} \\};
    \arrowsquare{(f,g)}{\delta}{\delta}{f\amalg g}
  \end{diagram}
  commutes. Therefore in $K_1(\C)$,
  \begin{align*}
    \big[\morpair{\{A,B\}}{\{A,B\}}{f\amalg g}{1}\big] &= 
    \big[\morpair{\{A,B\}}{\{A,B\}}{f\amalg g}{1}\big] +
                                                         \big[\morpair{\{A,B\}}{\{A\amalg B\}}{\delta}{\delta}\big]\\
    &= \big[\morpair{\{A,B\}}{\{A\amalg B\}}{\delta}{\delta}\big] +
    \big[\morpair{A\amalg B}{A \amalg B}{f\amalg g}{1_{A\amalg B}}\big] =
      [A\amalg B, f\amalg g],
  \end{align*}
  as desired.
\end{proof}

We now turn to examining how these interact with the product structure.  From
the previous section we know that for any pair of closed assemblers $\C$ and
$\D$, there is a map
\[\phi:K(\C) \sma K(\D) \rto K(\C \sma \D).\]
Applying $\pi_1$ induces a map 
\[\big(K_0(\C) \otimes K_1(\D)\big) \oplus \big(K_1(\C) \otimes K_0(\D)\big)
  \rcofib \pi_1(K(\C) \sma K(\D)) \rto^{\pi_1\phi} K_1(\C\sma \D).\] Since
$K_0(\C)$ is generated by symbols $[C]$, where $C\in \ob\SC(\C)$, and $K_1(\D)$
is generated by symbols $\big[\morpair{A}{B}{f}{g}\big]$, it makes sense to ask
whether there is a good representative for
\[\phi\left([C] \otimes \big[\morpair{A}{B}{f}{g}\big]\right).\]

\begin{theorem} \label{thm:K1mult-real} Let $\C$ and $\D$ be closed assemblers, and
  let $\phi:K^W(\C) \sma K^W(\D) \rto K^W(\C\sma \D)$ be given by the monoidal
  structure of $K^W$.  For any $[C]\in K_0(\C)$ and
  $\big[\morpair{A}{B}{f}{g}\big]$ in $K_1(\D)$,
  \[\phi_*\left( [C] \otimes \big[\morpair ABfg\big] \right) = \left[
      \longmorpair{1.2}{(C,A)}{(C,B)}{(1_C,f)}{(1_C,g)} \right] \in K_1(\C\sma
    \D).\]
  Analogously, for any $\big[\morpair CDfg\big]$ in $K_1(\C)$ and $[A]\in
  K_0(\D)$, 
  \[\phi_*\left( \big[\morpair CDfg\big]\otimes [A] \right) = \left[
      \longmorpair{1.2}{(C,A)}{(D,A)}{(f,1_A)}{(g,1_A)} \right] \in K_1(\C\sma \D).\]

\end{theorem}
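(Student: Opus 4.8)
The plan is to make the pairing $\phi$ completely explicit at the level of the categories $\W(-)$ and then to carry a representing pair of morphisms through it. Recall from the construction of the monoidal structure on $K^W$ in Section~\ref{sec:Kmonoidal} — which, in the $\Gamma$-space model $K^\Gamma(\C)=\mathbf{B}(X\rgoesto\W(X\sma\C))$ that agrees with $K^W$, is obtained from the smash product of $\Gamma$-spaces — that $\phi$ is induced by the biadditive functor
\[\nu=\nu_{\C,\D}\colon\W(\C)\times\W(\D)\rto\W(\C\sma\D),\qquad \nu\big(\{A_i\}_{i\in I},\{B_j\}_{j\in J}\big)=\{(A_i,B_j)\}_{(i,j)\in I\times J},\]
the same functor $\nu$ that appears in the proof of Proposition~\ref{prop:biperm}; on morphisms it sends a pair $(f,f')$ to the morphism with set map $f\times f'$ and component maps $(f_i,f'_{i'})$. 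Applying $\nu$ wedge-summand by wedge-summand (using $(X\sma\C)\sma(Y\sma\D)=(X\sma Y)\sma(\C\sma\D)$) gives the bilinear map of $\Gamma$-spaces underlying $\phi$.

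First I would reduce to the case where $[C]$ is the class of a single object $C$, i.e.\ of the one-element tuple $\{C\}\in\W(\C)$. Both sides of the claimed identity are additive in $[C]$: the left-hand side by bilinearity of $\phi_*$, and the right-hand side because for any finite disjoint covering family $\{C_\ell\rto C\}_\ell$ the relations of Theorem~\ref{thm:Bpi1}, applied just as in the proof of Corollary~\ref{cor:K1sp}, give
\[\big[\morpair{(C,A)}{(C,B)}{(1_C,f)}{(1_C,g)}\big]=\sum_\ell\big[\morpair{(C_\ell,A)}{(C_\ell,B)}{(1_{C_\ell},f)}{(1_{C_\ell},g)}\big].\]

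With $C$ a fixed object, the key point is that multiplication by $[C]$ on $K^W(\D)$ agrees up to homotopy with the map of spectra $K^W(\D)\to K^W(\C\sma\D)$ induced by the additive functor $\nu(\{C\},-)\colon\W(\D)\to\W(\C\sma\D)$; this is the standard behaviour of a smash pairing of $\Gamma$-spaces when a fixed point is inserted into one factor, the object $\{C\}$ of $\W(\C)$ representing $[C]\in\pi_0K^W(\C)=K_0(\C)$. Granting this, $\phi_*([C]\otimes x)$ is the image of $x$ under $\nu(\{C\},-)$ for every $x\in K_1(\D)$. By Theorem~\ref{thm:Bpi1} the class $x=\big[\morpair{A}{B}{f}{g}\big]$ corresponds to the pair of $\W(\D)$-morphisms $\morpair{A}{B}{f}{g}$, and an additive functor $F$ carries it to $\big[\morpair{FA}{FB}{Ff}{Fg}\big]$; since $\nu(\{C\},-)$ sends $A=\{A_j\}_j$ to $(C,A)\defeq\{(C,A_j)\}_j$, sends $B$ to $(C,B)$ likewise, and sends $f$ to the morphism $(1_C,f)$ with the set map of $f$ and component maps $(1_C,f_j)$ — precisely the notation of the statement — we obtain
\[\phi_*\!\left([C]\otimes\big[\morpair{A}{B}{f}{g}\big]\right)=\left[\longmorpair{1.2}{(C,A)}{(C,B)}{(1_C,f)}{(1_C,g)}\right]\in K_1(\C\sma\D).\]
The second assertion follows verbatim with $\nu(-,\{A\})\colon\W(\C)\to\W(\C\sma\D)$ in place of $\nu(\{C\},-)$, or from the first via the braiding $\C\sma\D\cong\D\sma\C$.

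The step I expect to be the main obstacle is the key point above: identifying multiplication by the $K_0$-class $[C]$ with the map induced by the elementary functor $\nu(\{C\},-)$. This requires reconciling the monoidal structure on $K^W$ — built and transported through the $\Gamma$-space and Waldhausen machinery of Section~\ref{sec:Kmonoidal} — with a bare categorical functor, and checking that no stray homotopies intervene at the level of $\pi_1$. Once that is settled, the remainder is bookkeeping with the $K_1$-generators of Theorem~\ref{thm:Bpi1}.
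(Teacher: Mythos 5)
Your route is viable but genuinely different from the paper's, and the step you yourself flag as the main obstacle is exactly where the paper's proof does its work. The paper does not argue via the $\Gamma$-space pairing at all: it invokes the Muro--Tonks presentation of the $1$-type of a Waldhausen category by a stable quadratic module generated by objects and weak equivalences, which is compatible with biexact pairings in the strong sense that the product of a $K_0$-generator with a $K_1$-generator is sent to the evident $K_1$-generator; combining this with the identification of $\phi$ as induced by the biexact functor $\SC(\C)\times\SC(\D)\rto\SC(\C\sma\D)$ coming from $\nu_{\C,\D}$, and with the translation of that presentation into the assembler generators of \cite[Theorem B]{Z-ass-pi1}, the formula falls out on generators with no need to chase explicit $\pi_1$-representatives through a smash product of spectra. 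What your approach buys is a more elementary, model-level argument (a projection-formula-style statement plus naturality of the $K_1$ presentation); what the paper's approach buys is that the interaction of the pairing with $\pi_1$-generators is already packaged in the cited machinery.

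As written, though, your ``key point'' is granted rather than proved, so there is a gap to close. It is a true and standard fact: if the pairing is induced at the point-set level by $\nu_{\C,\D}$ (on the $\Gamma$-space model, or by the corresponding biexact functor on the Waldhausen model), then choosing the unit map $\S\rto K(\C)$ to factor through the vertex given by the object $C$ makes the composite $\S\sma K(\D)\rto K(\C)\sma K(\D)\rto K(\C\sma\D)$ literally the map induced by $\nu(\{C\},-)$, which is $\W$ applied to the morphism of assemblers $D\rgoesto (C,D)$; naturality of the presentation of $K_1$ then gives your formula. But this verification must be carried out in the same model in which $\phi$ is actually constructed (the paper's monoidal structure on $K^W$ is built through $\W(-)$, $\SC(-)$ and rectification machinery), and either that check or an appeal to the Muro--Tonks compatibility has to appear explicitly; without it the proof is incomplete at precisely the point carrying the content. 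Two smaller remarks: your reduction to a single object $C$ is not needed (the theorem's $[C]$ is already the class of an object of $\SC(\C)$, i.e.\ a tuple, and the argument applies verbatim to tuples), and as stated the reduction does not quite parse, since the right-hand side of the identity is only defined for an object representative, not for an arbitrary class in $K_0(\C)$.
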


\begin{proof}
  In \cite[Theorem 2.5]{murotonks07} Muro and Tonks give a presentation for the
  $1$-type of a Waldhausen category in a way which is compatible with its
  multiplicative structure.  More concretely, they show that for any Waldhausen
  category $\E$ there exists a stable quadratic module (see \cite[Definition
  1.4]{murotonks07}) generated by the objects and weak equivalences in $\E$,
  whose homology encodes $K_0(\E)$ and $K_1(\E)$.  This is compatible with the
  multiplicative structures on Waldhausen categories, in the sense that for a
  biexact functor $F: \C\times \D \rto \E$ of Waldhausen categories, the
  generators map in a predictable manner, with (for example)
  $F_*([A], [C \rwe D]) = [F(A,C) \rwe F(A,D)]$.  Muro and Tonks then give
  generators and relations for $K_1$ of a Waldhausen category based on the
  structure of the stable quadratic module.

  In \cite[Definition 3.2, Proposition 3.4]{Z-ass-pi1} a special case of the
  structure is worked out, in the case when the Waldhausen structures arise from
  assemblers; generators and relations for $K_0$ and $K_1$ of an assembler
  follow from this in \cite[Theorem 3.8]{Z-ass-pi1}.  The map $\phi$ in the
  statement of the theorem arises from the biexact functor
  $\SC(\C) \times \SC(\D) \rto \SC(\C\sma \D)$ induced by $\nu_{\C,\D}$.
  Applying the multiplicative structure given by Muro and Tonks to the
  presentations of $K_0$ and $K_1$ gives the desired result. 
\end{proof}

\begin{remark}
  The map on $\pi_1$ also induces a map
  \[\operatorname{Tor}_1(K_0(\C), K_0(\D)) \rto K_1(\C\sma \D).\]
  If this term is nontrivial it may be possible to find an interesting
  interpretation for its image in $K_1(\C\sma\D)$. 
\end{remark}

\begin{corollary}
  Let $\C$ and $\D$ be closed assemblers.
  For any objects $A\in \C$, $B\in \D$, $\alpha\in \Aut(A)$ and $\beta\in \Aut(B)$,
  \[\phi_*([A]\otimes [B,\beta]) = [(A,B), (1, \beta)]\in K_1(\C\sma\D)\]
  and
  \[\phi_*([A,\alpha]\otimes [B]) = [(A,B), (\alpha, 1)] \in K_1(\C\sma \D).\]
\end{corollary}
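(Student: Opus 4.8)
The plan is to deduce the corollary immediately from Theorem~\ref{thm:K1mult-real} by unwinding the notation $[X,f]$ for the distinguished $K_1$-generators fixed just before Corollary~\ref{cor:K1sp}: recall that $[X,f]$ denotes the class of the parallel pair $f,\,1_X\colon X\rto X$. There is essentially no content beyond this specialization.

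First I would recall that, for $B\in\D$ and $\beta\in\Aut(B)$ regarded as an object of $\W(\D)$ indexed over a singleton, the element $[B,\beta]\in K_1(\D)$ is by definition the class of the parallel pair $\beta,\,1_B\colon B\rto B$. I would then apply Theorem~\ref{thm:K1mult-real}, taking its ``$[C]$'' to be $[A]$ and its parallel pair to be $\beta,\,1_B\colon B\rto B$. The theorem gives that $\phi_*\bigl([A]\otimes[B,\beta]\bigr)$ is the class in $K_1(\C\sma\D)$ of the parallel pair $(1_A,\beta),\,(1_A,1_B)\colon (A,B)\rto(A,B)$.

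It remains only to identify this class with $[(A,B),(1,\beta)]$, and this is immediate from the fact that $\C\sma\D$ is a full subcategory of $\C\times\D$ in which composition and identities are computed coordinatewise: thus $(1_A,1_B)=1_{(A,B)}$, and $(1_A,\beta)$ is an automorphism of $(A,B)$ since $1_A$ and $\beta$ are. Hence the class equals $[(A,B),(1_A,\beta)]$, which is the first formula. The second formula follows by the symmetric argument: apply the second half of Theorem~\ref{thm:K1mult-real} to the parallel pair $\alpha,\,1_A\colon A\rto A$ representing $[A,\alpha]\in K_1(\C)$ together with $[B]\in K_0(\D)$, obtaining the class of $(\alpha,1_B),\,(1_A,1_B)\colon(A,B)\rto(A,B)$, and identify $(1_A,1_B)$ with $1_{(A,B)}$ to get $[(A,B),(\alpha,1_B)]$. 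No step presents a genuine obstacle; all the substantive work sits inside Theorem~\ref{thm:K1mult-real}, whose proof rests on the Muro--Tonks presentation of the $1$-type of a Waldhausen category.
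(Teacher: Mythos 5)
Your proposal is correct and matches the paper's (implicit) argument: the corollary is exactly the specialization of Theorem~\ref{thm:K1mult-real} to the generators $[B,\beta]=\big[\morpair BB\beta{1_B}\big]$ and $[A,\alpha]=\big[\morpair AA\alpha{1_A}\big]$, using that identities in $\C\sma\D$ are computed coordinatewise so that $(1_A,1_B)=1_{(A,B)}$. Nothing further is needed.
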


\bibliographystyle{IZ}
\bibliography{IZ-all}

\end{document}